\definecolor{red}{rgb}{1,0,0}
\newenvironment{myequation}
    {
        \begin{equation*}
        \begin{gathered}
    }
    { 
        \end{gathered} 
        \end{equation*}
    }
\newtheorem{theorem}{Theorem}[section]
\newtheorem{claim}[theorem]{Claim}
\newtheorem{lemma}[theorem]{Lemma}
\newtheorem{definition}[theorem]{Definition}
\newtheorem{prop}[theorem]{Proposition}
\newtheorem*{remark}{Remark}
\newtheorem*{theorem*}{Theorem}
\newtheorem*{corollary}{Corollary}
\newcommand{\R}{\mathbb{R}}
\newcommand{\N}{\mathbb{N}}
\newcommand{\Q}{\mathbb{Q}}
\newcommand{\Z}{\mathbb{Z}}
\DeclareMathOperator{\Ima}{Im}
\title{Eggbeater dynamics on symplectic surfaces of genus 2 and 3}
\date{\today}
\author{Arnon Chor}
\begin{document}

\maketitle
\thispagestyle{empty}

\begin{abstract}
    The group $Ham(M,\omega)$ of all Hamiltonian diffeomorphisms of a symplectic manifold $(M,\omega)$ plays a central role in symplectic geometry. This group is endowed with the Hofer metric. In this paper we study two aspects of the geometry of $Ham(M,\omega)$, in the case where $M$ is a closed surface of genus 2 or 3. First, we prove that there exist diffeomorphisms in $Ham(M,\omega)$ arbitrarily far from being a $k$-th power, with respect to the metric, for any $k \geq 2$. This part generalizes previous work by Polterovich and Shelukhin. Second, we show that the free group on two generators embeds into the asymptotic cone of $Ham(M,\omega)$. This part extends previous work by Alvarez-Gavela et al. Both extensions are based on two results from geometric group theory regarding incompressibility of surface embeddings.
\end{abstract}

\section{Introduction and main results}
    \subsection{Introduction}
        Recall that a symplectic manifold is a smooth even-dimensional manifold $M$, equipped with a closed non-degenerate differential 2-form $\omega$. Non-degeneracy of $\omega$ means that its top wedge power, $\omega^n$, is a volume form. Symplectic manifolds serve as models of phase spaces of classical mechanics. The group of all automorphisms of a symplectic manifold, i.e. maps $M \to M$ which preserve $\omega$, contains a subgroup of all physically possible mechanical motions. This is the group of Hamiltonian diffeomorphisms, denoted $Ham(M,\omega)$, which plays a central role in symplectic topology and Hamiltonian dynamics.
        
        In 1990 H. Hofer introduced a remarkable bi-invariant Finsler metric on $Ham$ (see~\cite{H}). Studying coarse geometry of the metric space $Ham$ is an important problem of modern symplectic topology, and is still far from well-understood. This paper takes a step in this direction.
        
        Let us briefly outline our main results. Denote by $Powers_k \subset Ham$ the set of elements admitting a root of degree $k$. It has been shown in~\cite{PS14} that for surfaces of genus $\geq 4$ the complement $Ham \setminus Powers_k$ contains an arbitrarily large ball, with respect to the metric introduced by Hofer. We extend this result to surfaces of genus 2 and 3.
        
        Further, according to~\cite{CZ}, any asymptotic cone (in the sense of Gromov, see~\cite{G}) of a group with bi-invariant metric has a natural group structure. It has been shown in~\cite{10} that for a surface $M$ of genus $\geq 4$ any such cone of $Ham(M,\omega)$ contains a free group with 2 generators. We extend this result to surfaces of genus 2 and 3.
        
        The proofs of the above-mentioned results on surfaces of genus $\geq 4$ involve Floer homology of non-contractible closed orbits on the surface, and are proved by considering special elements of $Ham$, the so-called eggbeater maps (see~\cite{FO} and~\cite{PS14}), which originate in chaotic dynamics. An algebraic analysis of non-contractible closed orbits of these maps plays an important role in the proof. This is exactly the place where we had to modify the original arguments, in order to extend the results to the cases of genera 2 and 3. Our main innovations are two algebraic results about homomorphisms from a free group into a surface group (see Lemma~\ref{lemma:2} and Claim~\ref{cl:incompressible} below).
    
    \subsection{Preliminaries}
        Recall that Hofer's metric on $Ham(M,\omega)$ is defined as
        \begin{myequation}
            d_H(f,g) = \inf_H \int_0^1 \left( \max_M H_t - \min_M H_t \right) dt ,
        \end{myequation}
        for $f,g \in Ham(M,\omega)$, where the infimum is taken over all smooth $H: S^1 \times M \to \R$ that generate $f^{-1}g$ and where $H_t = H(t,\cdot)$. This is a bi-invariant metric on $Ham(M)$, and the fact that it is a genuine metric, as opposed to a pseudo-metric, is non-trivial (see~\cite{H},\cite{LM}). Hofer's norm of a diffeomorphism is its Hofer distance to the identity, and is denoted
        \[
            ||\cdot||_H = d_H(id,\cdot) .
        \]\\
        
        The focus of this paper is the metric space $(Ham(M), d_H)$. The question whether for every symplectic manifold $Ham(M)$ has infinite diameter with respect to Hofer's metric is an important open problem in this field; it is conjectured (see discussion in 14.2 of~\cite{MS}) that the answer to this question is positive for every closed symplectic manifold $M$. The conjecture has been partially confirmed (see~\cite{Sc},\cite{P},\cite{MS}). \\
        
        Rather than asking about the diameter of the group $Ham(M)$, one can ask about the supremum of distances $d_H(f,X)$ for some subset $X \subset Ham(M)$. A natural set $X$ to ask this question for is $Aut(M,\omega)$, the set of autonomous (i.e. "time-independent") Hamiltonian diffeomorphisms, and another interesting family of sets are the sets $Powers_k = \{\psi^k | \psi \in Ham(M,\omega)\}$ of Hamiltonian diffeomorphisms which admit $k^{th}$ roots, for $k \geq 2$ an integer. The quantities queried are the following:
        \begin{myequation}
            aut(M,\omega) = \sup_{\phi \in Ham(M, \omega)} d_H(\phi, Aut(M, \omega)) ,\\
            powers_k(M,\omega) = \sup_{\phi \in Ham(M, \omega)} d_H(\phi, Powers_k(M,\omega)) .
        \end{myequation}
        Note that for a symplectic manifold $M$, showing that $aut(M) = \infty$ or that for any $k \geq 2$, $powers_k(M) = \infty$ would answer the Hamiltonian diameter question for $M$.
        
        L. Polterovich and E. Shelukhin conjectured in~\cite{PS14} that $aut(M) = \infty$ for all closed symplectic manifolds, and made a first step in that direction: they show that symplectic surfaces $M$ of genus $\geq 4$ have  $powers_k(M) = \infty$ for all $k \geq 2$. One of the results in this paper states this is also true for symplectic surfaces of genera 2,3 (see Theorem~\ref{thm:2} below). \\
        
        Our second result concerns the coarse structure of the metric space $(Ham(M), d_H)$. To state it we need the notions of the asymptotic cone of a metric space, which is an important notion in coarse geometry (see~\cite{G}), and of ultrafilters and ultralimits. A \textit{filter} on a partially ordered set $(P,\leq)$ is a non-empty proper subset $F \subset P$ that is upward closed and downward directed; i.e. if $x \in F, y \in P, x \leq y$ then $y \in F$, and also $\forall x,y \in F \ \exists z \in F$ such that $z \leq x,y$. A \textit{non-principal ultrafilter} on $(P,\leq)$ is a filter $F$ on $(P,\leq)$ such that there is no filter $F^\prime$ on $P$ with $F \subset F^\prime \subset P$, and such that $F$ is not of the form $\{x \in P | y \leq x\}$ for any $y \in P$. Given a metric space $(X,d)$, an ultrafilter $\mathcal{U}$ on the power set of the natural numbers $2^\N$ (equipped with the inclusion order) and a sequence of points $(x_n)$ in $X$, a point $x \in X$ is the $\mathcal{U}$-\textit{ultralimit} of $(x_n)$, denoted $\lim_\mathcal{U} x_n$, if for any $\epsilon > 0$, $\{n | d(x_n,x) \leq \epsilon\} \in \mathcal{U}$. The ultralimit does not necessarily exist, but can be shown to exist if $(x_n)$ is bounded.
        
        Let $(X,d)$ be a metric space, fix $\mathcal{U}$ a non-principal ultrafilter on $2^\N$, and fix some basepoint $x_0 \in X$. The \textit{asymptotic cone} of $(X,d)$ is a metric space $Cone_\mathcal{U}(X,d)$ whose underlying set is 
        \begin{myequation}
            \left\{ \left( x_k \right)_{k \in \N} \in X^\N \middle| \exists C > 0 \ s.t.\  \forall k: \frac{d(x_k,x_0)}{k} < C \right\} \bigg/ \thicksim ,
        \end{myequation}
        where $(x_k) \sim (y_k)$ if $\lim_\mathcal{U} \frac{d(x_k,y_k)}{k} = 0$, and whose metric is 
        \begin{myequation}
            d_\mathcal{U} ([(x_k)], [(y_k)]) = \lim_\mathcal{U} \frac{d(x_k,y_k)}{k} .
        \end{myequation}
        
        Assume additionally that $X$ is a group, and that $d$ is a bi-invariant metric. Then $Cone_\mathcal{U}(X,d)$ is also a group, with multiplication
        \begin{myequation}
            [(x_k)] \cdot [(y_k)] = [(x_k \cdot y_k)] .
        \end{myequation}
        Since $d$ is bi-invariant, this multiplication is well defined, and $d_\mathcal{U}$ is also bi-invariant.
        
        Elements of the asymptotic cone represent directions (or rather, velocities) in which one can go to infinity in the base space $X$. For example, bounded metric spaces all have asymptotic cones which are single points, and on the other hand, the asymptotic cone of the hyperbolic plane is a tree with uncountably many branches at each point. The asymptotic cone is an invariant of the coarse structure, or the large-scale properties of a metric space, in the sense that quasi-isometric spaces have the same asymptotic cones (see~\cite{R} for more on coarse structure, asymptotic cones, and quasi-isometry). \\
        
        The focus of this paper is the geometry of $Ham(M)$ for $(M,\omega)$ a symplectic manifold, therefore we consider $Cone_\mathcal{U}(Ham(M), d_H)$ for a non-principal ultrafilter $\mathcal{U}$ on $2^\N$. In~\cite{10}, D. Alvarez-Gavela et al. show that given a symplectic surface $M$ of genus $\geq 4$, there exists a monomorphism $F_2 \to Cone_\mathcal{U}(Ham(M))$, where $F_2$ is the free group on two generators, and therefore $Cone_\mathcal{U}(Ham(M))$ has a subgroup isomorphic to $F_2$. The second result in this paper states this is also true for symplectic surfaces of genera 2,3 (see Theorem~\ref{thm:3} below).
        
        We turn now to our main results.
    
    \subsection{Results}
    \label{sec:results}
        The following results are generalizations of previous theorems appearing in~\cite{PS14},~\cite{10}: specificaly, Theorem~\ref{thm:2} is a generalization of Theorem 1.3 in~\cite{PS14}, and Theorem~\ref{thm:3} is a generalization of Theorem 1.1 in~\cite{10}. The original theorems are the same as stated here, except for their assumptions on the surface $\Sigma$: while the original theorems hold for all closed symplectic surfaces $\Sigma$ of genus $\geq 4$, the theorems presented here hold for closed symplectic surfaces of genera 2 and 3.
        
        \begin{theorem}
        \label{thm:2}
            Let $\Sigma$ be a closed oriented surface of genus 2 or 3, equipped with an area form $\sigma$, and $k \geq 2$ an integer. Then $powers_k(\Sigma, \sigma) = \infty$.
        \end{theorem}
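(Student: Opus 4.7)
The plan is to follow the strategy of Polterovich--Shelukhin~\cite{PS14}, replacing the easy incompressibility argument available in higher genus with the finer algebraic input provided by Lemma~\ref{lemma:2} and Claim~\ref{cl:incompressible}. Throughout I fix an area form $\sigma$ and an integer $k \geq 2$, and I aim to construct a family of Hamiltonian diffeomorphisms whose Hofer distance to $Powers_k$ is unbounded.

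First I would set up the eggbeater construction on $\Sigma$. Choose two families $A_1,\ldots,A_r$ and $B_1,\ldots,B_s$ of pairwise disjoint embedded annuli, each with essential core, and with the cores of the $A_i$ transverse to those of the $B_j$; the configuration has to be chosen carefully so that Lemma~\ref{lemma:2} and Claim~\ref{cl:incompressible} apply in the setting of $\pi_1(\Sigma)$ for genus $2$ or $3$. Take autonomous Hamiltonians $F_\lambda$ supported on $\bigcup_i A_i$ and $G_\mu$ supported on $\bigcup_j B_j$ that rotate each annulus at a rate controlled by $\lambda,\mu$, and form the eggbeater $\phi_{\lambda,\mu} = \phi_{F_\lambda}^1 \circ \phi_{G_\mu}^1$. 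The goal is then to prove $d_H(\phi_{\lambda,\mu}^N, Powers_k) \to \infty$ along a suitable sequence $\lambda,\mu,N \to \infty$.

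Next I would import the Floer-theoretic lower bound machinery of~\cite{PS14}. For a non-contractible free homotopy class $\alpha$ on $\Sigma$, the filtered Floer complex $HF^\ast(\phi;\alpha)$ yields a spectral number $c_\alpha(\phi)$ that is $1$-Lipschitz in the Hofer metric. A direct variational / normal-form analysis identifies the non-contractible one-periodic orbits of $\phi_{\lambda,\mu}^N$ and shows that they fall into conjugacy classes $\alpha_w$ indexed by reduced cyclic words $w$ in an alphabet naturally attached to the cores. If $\phi_{\lambda,\mu}^N$ were within Hofer distance $R$ of some $\psi^k$, then covering-space Floer continuation would force $\psi$ to carry a one-periodic orbit in every class $\beta$ whose $k$-th power is conjugate in $\pi_1(\Sigma)$ to some $\alpha_w$. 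Comparing the number of distinct such $\beta$ against the total Hofer energy available to $\psi$ produces a lower bound on $R$ that grows with $N$.

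The hard part, and the sole place where the argument differs from the genus-$\geq 4$ case, is the algebraic input underpinning this counting. One must check that the classes $\alpha_w$ obtained from distinct $w$ remain pairwise non-conjugate in $\pi_1(\Sigma)$, that none of them is a proper power, and that no two distinct candidate $\beta$'s have conjugate $k$-th powers. For $\mathrm{genus}(\Sigma) \geq 4$ the cores generate a freely and malnormally embedded free subgroup of $\pi_1(\Sigma)$ with room to spare, and all of this is essentially automatic. For genus $2$ or $3$ the surface group is too cramped for such a naive configuration, and the main technical obstacle is precisely to establish Lemma~\ref{lemma:2} and Claim~\ref{cl:incompressible}: these should express the required incompressibility (injectivity together with a malnormality-type property) of the natural homomorphism from the free group on the cores into $\pi_1(\Sigma)$ associated to a carefully designed annulus pattern. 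Once these algebraic facts are in hand, the Floer-theoretic scheme of~\cite{PS14} transplants essentially verbatim and delivers $powers_k(\Sigma,\sigma) = \infty$.
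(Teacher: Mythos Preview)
Your proposal has the right architecture---eggbeaters, Floer invariants in non-contractible classes, and an algebraic substitute for the incompressibility available in genus $\geq 4$---but the specifics diverge from the paper in ways that matter. The paper does not track many classes $\alpha_w$ and count root classes $\beta$; it fixes a \emph{single} free homotopy class $\tau_{i_2}(\alpha_k')$ and proves (Proposition~\ref{prop:new2}) that the eggbeater $(i_2)_*\Xi_k((VH)^r)$ has exactly $2^{2r}$ nondegenerate fixed points in that one class, with pairwise action gaps growing linearly in $k$. From there the deduction to $powers_k=\infty$ is the persistence-module argument of~\cite{PS14}, which is genus-independent. Correspondingly, Lemma~\ref{lemma:2} is not a malnormality statement and says nothing about $k$-th roots: for the specific $i_*:F_3\to\pi_1(\Sigma_2)$ arising from the embedding, it asserts that if $\delta=\prod_j a^{k_j}c^{-u_j}b^{l_j}c^{v_j}$ and $\beta=\prod_j a^{m_j}b^{n_j}$ with all $m_j,n_j\neq 0$, then $i_*\delta\sim i_*\beta$ forces $\delta\sim\beta$. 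The point is that \emph{every} eggbeater orbit in $C$ has class of the first form (Claim~\ref{claim:3}) while the target $\alpha_k'$ has the second, so no extraneous $C$-orbit can fall into $\tau_{i_2}(\alpha_k')$ after pushing forward (Claim~\ref{claim:4}). That is the whole job of the lemma: preserving the fixed-point count in one class, not controlling roots or malnormal intersections.

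You are also missing a structural piece of the construction. The embedding into $\Sigma$ requires auxiliary cylinders $C_1,C_2$ placed on the handles so that the pushed-forward Hamiltonian extends continuously across $\Sigma\setminus i_2(C)$ (Condition~\eqref{eq:condition}), and these cylinders carry their own periodic orbits. One must separately verify that no such handle-orbit lies in $\tau_{i_2}(\alpha_k')$; this is Claim~\ref{claim:5}, proved via the amalgamated-product length estimate of Claim~\ref{cl:length}. Without this step the fixed-point count in $\Sigma$ would not match the count in $C$, and Proposition~\ref{prop:new2} would not follow from Claim~\ref{cl:2.2}.
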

        
        \begin{theorem}
        \label{thm:3}
            Let $\Sigma$ be a closed oriented surface of genus 2 or 3, equipped with an area form $\sigma$. Then for any non-principal ultrafilter $\mathcal{U}$ on $2^\N$, there exists a monomorphism $F_2 \hookrightarrow Cone_\mathcal{U}(Ham(\Sigma), d_H)$.
        \end{theorem}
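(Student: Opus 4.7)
The plan is to adapt the strategy of Alvarez-Gavela et al.\ \cite{10} to genera 2 and 3, with the algebraic results flagged in the introduction (Lemma~\ref{lemma:2} and Claim~\ref{cl:incompressible}) replacing the purely geometric incompressibility argument available only for genus $\geq 4$. First I would construct the two eggbeater Hamiltonian diffeomorphisms $f, g \in Ham(\Sigma)$, supported in two annular regions $A, B \subset \Sigma$ whose core circles represent homotopy classes $\alpha, \beta \in \pi_1(\Sigma)$ that generate a free subgroup. The combinatorics of the orbits of compositions $f^{\pm 1} g^{\pm 1} \cdots$ will be controlled by the free group $F_2 = \langle a, b \rangle$ via the correspondence $a \mapsto \alpha$, $b \mapsto \beta$.

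Next I would define the candidate monomorphism $\Phi : F_2 \hookrightarrow Cone_\mathcal{U}(Ham(\Sigma), d_H)$ by sending the generators to the classes of sequences $(f_k)_{k\in\N}$ and $(g_k)_{k\in\N}$, where $f_k, g_k$ are suitably rescaled eggbeater maps whose Hofer norms grow linearly in $k$, so that the sequences lie in the asymptotic cone. Multiplication in $Cone_\mathcal{U}$ being coordinate-wise and well defined thanks to bi-invariance, any reduced word $w \in F_2$ is sent to the class of $(w(f_k, g_k))_{k\in\N}$; injectivity of $\Phi$ thus reduces to the statement that for any non-trivial reduced word $w$, one has $\liminf_{k} \|w(f_k, g_k)\|_H / k > 0$.

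The core analytic tool for this lower bound is the Floer-theoretic machinery of \cite{PS14,10}, which estimates Hofer norms from below by counting $1$-periodic orbits of the generating Hamiltonian in prescribed non-trivial free homotopy classes of $\Sigma$. Each reduced word $w$ naturally produces a family of loops in $\Sigma$, obtained by concatenating arcs running through $A$ and $B$ according to the letters of $w$; the free homotopy classes they represent are indexed by cyclic conjugacy classes of subwords. The Floer lower bound will be non-trivial provided sufficiently many of these loops remain non-contractible and pairwise non-conjugate in $\pi_1(\Sigma)$. This is precisely the incompressibility requirement on the map from the combinatorial model (a free group, or equivalently a wedge of circles or a graph of annuli) into $\pi_1(\Sigma)$.

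I expect the main obstacle to be exactly this incompressibility step, which is where the low genus makes trouble: in genus $\geq 4$ the embedded configuration of annuli lies inside a subsurface whose inclusion is $\pi_1$-injective essentially by a Euler characteristic count, while in genera 2 and 3 the ambient surface is too small for a direct argument, and distinct words of $F_2$ can a priori collapse under the map to $\pi_1(\Sigma)$. This is where I would invoke Lemma~\ref{lemma:2} and Claim~\ref{cl:incompressible}: together they should guarantee that the homomorphism $F_2 \to \pi_1(\Sigma)$ induced by the configuration of $A, B$ is injective (or injective on the subset of conjugacy classes relevant to the Floer count), so that distinct reduced words produce distinct non-trivial homotopy classes of orbits. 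Once this algebraic input is in place, the Floer-theoretic estimate of \cite{10} applies essentially verbatim to yield the desired linear lower bound on $\|w(f_k, g_k)\|_H$, completing the proof that $\Phi$ is a monomorphism.
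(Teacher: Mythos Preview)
Your overall architecture is right: build eggbeater maps, define the map to the cone via rescaled sequences, reduce injectivity to a linear Hofer lower bound, and obtain that bound via Floer homology in prescribed free homotopy classes, with the algebraic input replacing the genus $\geq 4$ incompressibility. But several structural points are off.

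First, Lemma~\ref{lemma:2} and Claim~\ref{cl:incompressible} are not used \emph{together}; they underlie two \emph{separate} proofs. The main argument (Section~\ref{sec:3}) uses only Lemma~\ref{lemma:2} and works for both genera with a single embedding $i_2$; Claim~\ref{cl:incompressible} supports an \emph{alternative} proof (Section~\ref{sec:incompressibility}) that applies only in genus~3, with a different embedding $i_3$. You should pick one route, not invoke both at once.

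Second, the combinatorial model is not $F_2$. The eggbeater surface $C$ is a union of two annuli meeting in \emph{two} squares, so $\pi_1(C)\cong F_3=\langle a,b,c\rangle$, and the relevant map is $(i_2)_*:F_3\to\pi_1(\Sigma)$. The group $F_2=\langle V,H\rangle$ indexes the eggbeater \emph{words}, not the fundamental group of the domain. In genus~2 this map is genuinely \emph{not} injective on conjugacy classes, so one cannot hope for full incompressibility; Lemma~\ref{lemma:2} only asserts that conjugacy classes of the specific shape $\prod_j a^{k_j}c^{-u_j}b^{l_j}c^{v_j}$ are not merged with the target classes $\prod_j a^{m_j}b^{n_j}$. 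To use this you first need Claim~\ref{claim:3}, which says that \emph{every} closed eggbeater orbit in $C$ has free homotopy class of exactly this shape. Your sketch elides this step.

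Third, you are missing a piece of the construction. To push the Hamiltonian forward to $\Sigma$ one must add extra annuli $C_1,C_2$ (and $C_3$ in the $i_3$ variant) sitting on the handles, so that the pushed-forward Hamiltonian extends continuously across $\Sigma\setminus i_2(C)$. These extra annuli carry their own periodic orbits, and one must rule out that any of them lies in the target class $\tau_{i_2}(\alpha_{k,w})$; this is Claim~\ref{claim:5}, proved via the length estimate of Claim~\ref{cl:length}. Without this, the fixed-point count in Proposition~\ref{prop:new} could be wrong.
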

        
        We remark that since $aut(M) \geq powers_k(M)$, Theorem~\ref{thm:2} implies that $aut(\Sigma, \sigma) = \infty$ for any closed oriented surface $\Sigma$ of genus 2 or 3 with an area form $\sigma$. We remark further that the above results survive stabilization by a closed aspherical symplectic manifold. That is, if $(M,\omega)$ is a symplectic manifold with $\pi_2(M) = 0$ and $\Sigma$ is as above, then the results also hold for the symplectic manifold $(\Sigma \times M, \sigma \oplus \omega)$. This is shown in the same way as in~\cite{PS14},\cite{10}.
        
        In the proofs of our results we closely follow~\cite{PS14} and~\cite{10}. Specifically, we use the same construction called eggbeater maps (see~\cite{PS14} and~\cite{FO}). We will outline the proofs of the original theorems, and give an in-depth explanation of the changed parts in Section~\ref{sec:3}. An alternative proof of the theorems in genus 3 is presented in Section~\ref{sec:incompressibility}.

    \subsection{Outline of the proofs for genera 2,3}
    \label{sec:outline}
        In this subsection we present a short outline of the proofs of the theorems. The full details of the proofs are given in Section~\ref{sec:3}. \\
        
        \begin{figure}[!ht]
            \centering
            \begin{minipage}{.5\textwidth}
                \begin{minipage}{\textwidth}
                    \scalebox{0.8}{
                    \begin{tikzpicture}
                        \def\a{70};
                        \def\b{55};
                        \def\c{83};
                        \def\d{66.5};
                        
                        \draw (-1,0) ++(\a:3) arc (\a:360-\a:3);
                        \draw (-1,0) ++(\b:3) arc (\b:-\b:3);
                        \draw (-1,0) ++(\c:2.5) arc (\c:360-\c:2.5);
                        \draw (-1,0) ++(\d:2.5) arc (\d:-\d:2.5);
                        
                        \draw (1,0) ++(180-\a:3) arc (540-\a:180+\a:3);
                        \draw (1,0) ++(180-\b:3) arc (180-\b:180+\b:3);
                        \draw (1,0) ++(180-\c:2.5) arc (540-\c:180+\c:2.5);
                        \draw (1,0) ++(180-\d:2.5) arc (180-\d:180+\d:2.5);
                    
                        \draw (-1.5,3) node[anchor=south] {\Large $C_V$};
                        \draw (1.5,3) node[anchor=south] {\Large $C_H$};
                    \end{tikzpicture}}
                    \caption{The manifold $C$ used in the proof.}
                    \label{fig:C}
                \end{minipage} \\[1.5\baselineskip]
                \begin{minipage}{\textwidth}
                    \scalebox{0.9}{
                    \begin{tikzpicture}
                        \usetikzlibrary{decorations.markings}
                        \begin{scope}
                        [decoration={markings,mark=at position 0.55 with {\arrow[scale=2]{>}}}]
                            \draw
                                (-3,1) -- (3,1) -- (3,-1)
                                decorate {(3,1) -- (3,-1)};
                            \draw
                                (-3,1) -- (-3,-1) -- (3,-1)
                                decorate {(-3,1) -- (-3,-1)};
                        \end{scope}
                        
                        \draw (-2,1) -- (2,0) -- (-2,-1);
                        \draw[dashed] (-2,1) -- (-2,-1);
                        
                        \draw[->] (-2,0.6) -- (-0.9,0.6);
                        \draw[->] (-2,-0.6) -- (-0.9,-0.6);
                        \draw[->] (-2,0.2) -- (0.8,0.2);
                        \draw[->] (-2,-0.2) -- (0.8,-0.2);
                    \end{tikzpicture}}
                    \caption{The profile of the map $f: C_* \to C_*$.}
                    \label{fig:profile}
                \end{minipage}
            \end{minipage}%
            \hspace{0.1cm}
            \begin{minipage}{.48\textwidth}
                \centering
                \scalebox{0.9}{
                \begin{tikzpicture}
                    \draw (-2.5,-1) -- (-1,-1) -- (-1,-4.5);
                    \draw (2.5,-1) -- (1,-1) -- (1,-4.5);
                    \draw (2.5,1) -- (1,1) -- (1,2.5);
                    \draw (-2.5,1) -- (-1,1) -- (-1,2.5);
                    \draw[dotted] (-3,1) -- (-2.5,1);
                    \draw[dotted] (1,-4.5) -- (1,-4.8);
                    \draw[dotted] (1,2.8) -- (1,2.5);
                    \draw[dotted] (3,1) -- (2.5,1);
                    \draw[dotted] (3,-1) -- (2.5,-1);
                    \draw[dotted] (-1,2.8) -- (-1,2.5);
                    \draw[dotted] (-1,-4.5) -- (-1,-4.8);
                    \draw[dotted] (-3,-1) -- (-2.5,-1);
                    \draw (3,0) node[anchor=south] {$C_H$};
                    \draw (0,-4.8) node[anchor=west] {$C_V$};
                    
                    \draw[dashed] (-2,1) -- (-2,-1);
                    
                    \draw (-2,1) -- (-1,0.75) -- (0,-3.5) -- (1,0.25) -- (2,0) -- (1,-0.25) -- (0,-4.5) -- (-1,-0.75) -- (-2,-1);
                    
                    \draw[->] (-2,0.6) -- (-1.6,0.6);
                    \draw[->] (-2,-0.6) -- (-1.6,-0.6);
                    \draw[->] (-2,0.2) -- (-1.1,0.2);
                    \draw[->] (-2,-0.2) -- (-1.1,-0.2);
                    \draw (-2,0) node[shape=circle,draw,outer sep=2pt,anchor=east] {1};
                    
                    \draw[->] (-0.6,2) -- (-0.6,1.6);
                    \draw[->] (0.6,2) -- (0.6,1.6);
                    \draw[->] (-0.2,2) -- (-0.2,1.1);
                    \draw[->] (0.2,2) -- (0.2,1.1);
                    \draw (0,2) node[shape=circle,draw,outer sep=2pt,anchor=south] {2};
                \end{tikzpicture}}
                \caption{The neighborhood of an intersection of $C_V$ and $C_H$, with the profile of $f_V \circ f_H$ pictured. The map $f_V \circ f_H$ is a shear along $C_H$ (1) and then a shear along $C_V$ (2).}
                \label{fig:profile2}
            \end{minipage}
        \end{figure}
        
        Denote by $C = C_V \bigcup C_H$ the union of two identical annuli (see Figure~\ref{fig:C}), and by $C_*$ a single such annulus. Let $f: C_* \to C_*$ be a piecewise-linear shear map along the axis of $C_*$, whose profile consists of two straight lines (see Figure~\ref{fig:profile}). Applying the map $f$ on $C_V,C_H$, one gets two shear maps $f_V,f_H: C \to C$ with support in $C_V,C_H$ respectively. Composing the maps $f_V,f_H$ and their inverses several times in some specific order yields a map $C \to C$ called an egg-beater map. Figure~\ref{fig:profile2} depicts the profile of such an egg-beater map $f_V \circ f_H$ in the neighborhood of one of the intersections of $C_V$ and $C_H$.
        
        The proofs of both results, Theorem~\ref{thm:2} and Theorem~\ref{thm:3}, are based on counting the fixed points of the egg-beater map in $C$ whose orbits belong to some suitably selected free homotopy classes $\alpha_k \in \pi_0(\mathscr{L}C)$ (where $\mathscr{L}X$ is the free loop space of a space $X$), and studying the Floer homology of these orbits. Using this tool, it can be shown that if there are not too many such fixed points and some condition on the actions of their orbits holds, then the theorems themselves hold. This is all done in~\cite{PS14} and~\cite{10}.
        
        However, in order to get results on a closed oriented surface $\Sigma_g$ of genus $g$, this construction must be embedded in $\Sigma_g$, using an embedding denoted $i: C \hookrightarrow \Sigma_g$, and a similar analysis done there. In this step, one might encounter a new problem: consider $\tau_i: \pi_0(\mathscr{L}C) \to \pi_0(\mathscr{L}\Sigma_g)$, the mapping induced by $i$. The mapping $\tau_i$ might not be injective. In this case, the previous analysis done on $C$ will no longer hold, and one must then re-count fixed points in $\Sigma_g$, since the number of fixed points of the egg-beater map on $\Sigma_g$ in class $\alpha \in \pi_0(\mathscr{L}\Sigma_g)$ is the number of fixed points of the egg-beater map on $C$ in all the classes $\tau_i^{-1}(\alpha)$, which might be too much for these methods to work.
        
        Originally, in genus $g \geq 4$, this problem did not arise, since if $g \geq 4$ one can find an embedding $i$ such that $\tau_i$ is injective. In fact, one can also find an embedding $i_3$ into $\Sigma_3$ with $\tau_{i_3}$ injective; this approach is shown in Section~\ref{sec:incompressibility}. It is likely that there also exists an embedding $i_2$ into $\Sigma_2$ with an injective $\tau_{i_2}$. However, this isn't proven in this paper. \\
        
        The results for genera 2 and 3 are proven in Section~\ref{sec:3} by placing bounds on the non-injectivity of $\tau_i$, using a wisely chosen $i$ and an algebraic lemma. More precisely, note that injectivity of $\tau_i$ can be translated to a claim on images of conjugacy classes of $\pi_1(C)$ under the homomorphism $i_*: \pi_1(C) \to \pi_1(\Sigma_2)$ induced by $i$. One may choose $i$ such that the induced homomorphism $i_*: F_3 = \langle a,b,c \rangle \to \langle g_1,g_2,g_3,g_4 | [g_1,g_2][g_3,g_4] \rangle$ is the following:
        \begin{myequation}
            a \mapsto g_1 g_3 ,\\
            b \mapsto g_2 g_1^{-1} g_2^{-1} g_3 ,\\
            c \mapsto g_3 .
        \end{myequation}
        
        The main novel ingredient of the results of this paper is the following lemma:
        
        \begin{lemma}
        \label{lemma:2}
            Let $p \in \Z_{\geq 1}$. For all $1 \leq j \leq p$, let $k_j,l_j \in \Z, u_j,v_j \in \{0,1\}, 0 \neq m_j,n_j \in \Z$. Consider the homomorphism $i_*: F_3 \to \pi_1(\Sigma_2)$ given above. Let $\delta = \Pi_j a^{k_j}c^{-u_j}b^{l_j}c^{v_j} \in F_3$ and $\beta = \Pi_j a^{m_j}b^{n_j} \in F_3$.
            
            If $i_* \delta, i_* \beta$ are conjugates (in $\pi_1(\Sigma_2)$), then so are $\delta, \beta$ (in $F_3$).
        \end{lemma}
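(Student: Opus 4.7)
The plan is to combine a Stallings-folding injectivity check with a normal-form analysis in a suitable free-product quotient of $\pi_1(\Sigma_2)$.

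First, I would show that $i_*\colon F_3 \to \pi_1(\Sigma_2)$ is injective. By the Magnus Freiheitssatz applied to the one-relator group $\pi_1(\Sigma_2)$, the subgroup $F = \langle g_1, g_2, g_3 \rangle$ is free of rank $3$, and the image of $i_*$ lies in $F$. Performing a Stallings folding on the labelled graph whose petals are $g_1 g_3$, $g_2 g_1^{-1} g_2^{-1} g_3$, $g_3$ yields a folded core graph with $2$ vertices and $4$ edges, so its fundamental group is free of rank~$3$, matching the rank of $F_3$; hence $i_*$ has trivial kernel, and the lemma reduces to promoting the ambient conjugacy in $\pi_1(\Sigma_2)$ to a conjugacy inside $i_*(F_3) \cong F_3$.

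I would then pass to the quotient $q\colon \pi_1(\Sigma_2) \twoheadrightarrow G$ obtained by killing $g_4$. The surface relator degenerates to $[g_1, g_2] = 1$, so $G \cong (\mathbb{Z} \oplus \mathbb{Z}) \ast \mathbb{Z}$, with abelian factor $\langle g_1, g_2 \rangle$ and infinite-cyclic factor $\langle g_3 \rangle$. Since $g_2 g_1^{-1} g_2^{-1} = g_1^{-1}$ in $G$, the composition $\bar{i}_* := q \circ i_*$ sends $a \mapsto g_1 g_3$, $b \mapsto g_1^{-1} g_3$, $c \mapsto g_3$, and the hypothesized conjugacy descends along $q$ to $\bar{i}_*\delta \sim_G \bar{i}_*\beta$. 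The free-product normal-form theorem asserts that cyclically reduced alternating products in $G$ are conjugate iff they are cyclic rotations of one another. A block $(g_1 g_3)^{m_j}(g_1^{-1} g_3)^{n_j}$ of $\bar{i}_*\beta$ contributes an alternating string whose $\mathbb{Z}^2$-letters are powers of $g_1$ and whose $\mathbb{Z}$-letters are all of the form $g_3^{\pm 1}$, with at most a $g_1^{\pm 2}$-merger at each junction. The corresponding block $(g_1 g_3)^{k_j} g_3^{-u_j}(g_1^{-1} g_3)^{l_j} g_3^{v_j}$ of $\bar{i}_*\delta$ has the same shape, except that the inserted $g_3^{-u_j}$ and $g_3^{v_j}$ can lengthen a $\mathbb{Z}$-letter to $g_3^{\pm 2}$ unless they cancel with an adjacent $g_3$. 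Matching normal forms cyclically therefore forces every $c$-decoration of $\delta$ to cancel a neighboring $g_3$, which after cyclic reduction in $F_3$ collapses $\delta$ to a word cyclically equal to $\beta$; injectivity from the first step lifts this to a genuine $F_3$-conjugacy.

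The main obstacle is the bookkeeping in this last step: the signs of $k_j, l_j, m_j, n_j$ produce several distinct junction regimes, and one must carefully exploit the hypotheses $m_j, n_j \neq 0$ and $u_j, v_j \in \{0, 1\}$ to rule out cyclic patterns in $\bar{i}_*\delta$ that would match $\bar{i}_*\beta$ only through parasitic free-product cancellations not respecting the block structure. A related subtlety is that $q$ kills $g_4$ and so does not fully preserve conjugacy in $\pi_1(\Sigma_2)$; in any residual case where the comparison in $G$ leaves room for coincidences that do not lift, the argument may need to be refined directly in $\pi_1(\Sigma_2)$, using the relation $[g_1, g_2] = [g_3, g_4]^{-1}$ to track how $c$-decorated blocks of $\delta$ interact with the image of $i_*$.
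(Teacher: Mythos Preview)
Your approach has a structural gap. Once you kill $g_4$ and pass to $G=(\mathbb{Z}\oplus\mathbb{Z})*\mathbb{Z}$, the composite $\bar{i}_* = q\circ i_*$ is \emph{not} injective: for instance $\bar{i}_*(ac^{-1}bc^{-1}) = g_1g_3\cdot g_3^{-1}\cdot g_1^{-1}g_3\cdot g_3^{-1}=1$, so the kernel is the normal closure of $ac^{-1}bc^{-1}$ and the image of $\bar{i}_*$ is just the rank-$2$ free group $\langle g_1,g_3\rangle$. Consequently, even a complete normal-form match $\bar{i}_*\delta \sim_G \bar{i}_*\beta$ does not by itself give $\delta\sim\beta$ in $F_3$. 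Your sentence ``which after cyclic reduction in $F_3$ collapses $\delta$ to a word cyclically equal to $\beta$'' conflates two different cancellations: the $g_3^{-1}$ coming from a $c^{-1}$ may well cancel against the terminal $g_3$ of $\bar{i}_*(a)=g_1g_3$ in $G$, but in $F_3$ the letter $c^{-1}$ does not cancel against $a$. So the cascade of cancellations you describe (e.g.\ $\bar{i}_*(ac^{-1}b)=g_3$) destroys exactly the information that distinguishes $\delta$ from other preimages. Your last paragraph essentially concedes this and proposes to fall back on working directly in $\pi_1(\Sigma_2)$---but that is the whole problem, and you give no mechanism for it.

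The paper avoids this by never leaving $\pi_1(\Sigma_2)$. It factors $i_*=\varphi\circ A$ through an automorphism $A$ of $F_3$ and a simpler map $\varphi$ sending $a\mapsto g_1$, $b\mapsto g_2g_1g_2^{-1}$, $c\mapsto g_3$, then uses the amalgamated decomposition $\pi_1(\Sigma_2)=\langle g_1,g_2\rangle *_{\langle[g_1,g_2]\rangle}\langle g_3,g_4\rangle$. The crucial point is that the amalgamated subgroup $\langle[g_1,g_2]\rangle$ lies in $\operatorname{Im}\varphi$, so when the Conjugacy Theorem produces a conjugator in the amalgamated part, it can be pulled back through the (injective) $\varphi$ to $F_3$, giving conjugacy there directly whenever the cyclically reduced length is $\ge 2$. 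The remaining length-$1$ exceptions are exactly the classes $[g_1^j]$ and $[g_3^j]$, and these are ruled out for $A(\beta)$ by elementary projections using $m_j,n_j\neq 0$. Your quotient throws away $g_2$ (and hence the containment of the amalgamated part in the image), which is precisely what makes the paper's pullback step work.
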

        
        This lemma gives bounds on the non-injectivity of $\tau_i$: different conjugacy classes having a specific form cannot have the same image under $\tau_i$. One may show that all orbits of the egg-beater map have free homotopy classes of this specific form (see Claim~\ref{claim:3}). This allows one to count fixed points of the egg-beater map in $\Sigma_g$ whose orbits are in specific free homotopy classes $\alpha_k$ by counting the corresponding fixed points in $C$ whose orbits are in $\tau_i^{-1}(\alpha_k)$. This calculation was carried out in~\cite{PS14} and~\cite{10}, and shows that there are not too many such fixed points, and that the condition on their actions mentioned above holds. Therefore, using the Floer homology tool, one can prove Theorem~\ref{thm:2} and Theorem~\ref{thm:3}. \\
        
        A different approach is shown in Section~\ref{sec:incompressibility}: instead of bounding the non-injectivity of $\tau_i$, a different embedding $i_3$ into a surface of genus 3 is defined. The embedding $i_3$ has a restriction $i_3 \restriction_C$ that can be shown to induce an injective $\tau_{i_3 \restriction_C}$ with an argument using the intersection number of free homotopy classes (see Claim~\ref{cl:incompressible}). With this injectivity in hand, the above discussion yields the desired result on the number of fixed points of the egg-beater map. \\
    
        Section~\ref{sec:2} contains the proof of Lemma~\ref{lemma:2} and a related result. The proofs of the theorems for genera 2 and 3, using the lemma, can be found in Section~\ref{sec:3}. Proofs of the theorems for genus 3, using injectivity of the induced map $\tau_{i_3}$ of the suitably-defined embedding $i_3$, are found in Section~\ref{sec:incompressibility}.

    \subsection{Acknowledgements}
        This paper is a part of the my M.Sc thesis, carried out under the supervision of Prof. Leonid Polterovich and Prof. Yaron Ostrover, whom I would like to sincerely thank for their contributions and knowledge. I would like to thank Matthias Meiwes and Leonid Potyagailo, for many fruitful discussions and ideas. I would also like to thank Ofir Karin, Leonid Vishnevsky and Asaf Cohen, for helpful comments and remarks. This work was partially supported by ISF grant numbers 1274/14 and 667/18.

\section{Proof of the lemma}\label{sec:2}
    Recall the notation in chapter IV of~\cite{LS}. Let $G,H$ be finitely presentable groups, $A < G, B < H$ be isomorphic subgroups, and $\psi: A \xrightarrow{\sim} B$ an isomorphism. The \textit{free product of $G,H$ with respect to $\psi$} (or free product of $G,H$ with amalgamation), denoted $\langle G * H, A = B, \psi \rangle$, is defined as follows. If $G = \langle S_1 | R_1 \rangle, H = \langle S_2 | R_2 \rangle$ are finite presentations with $S_1 \cap S_2 = \emptyset$, then the free product with amalgamation is defined to be
    \[
        \langle G * H, A = B, \psi \rangle = \langle S_1 \cup S_2 | R_1, R_2, \{a \psi(a)^{-1} | a \in A\} \rangle .
    \]
    The groups $G$ and $H$ are called the \textit{factors} of $\langle G * H, A = B, \psi \rangle$.
    
    Free products with amalgamation occur naturally in topology: let $X$ is a topological space with open cover $\{Y,Z\}$ such that $Y \cap Z$ is connected, and denote $G = \pi_1(Y), H = \pi_1(Z)$; $A = \pi_1(Y \cap Z)$, considered as a subgroup of $G$; and $B = \pi_1(Y \cap Z)$, considered as a subgroup of $H$. Denote also by $\psi: A \xrightarrow{\sim} B$ the natural isomorphism. Then by the van Kampen theorem:
    \[
        \pi_1(X) = \langle G * H, A = B, \psi \rangle .
    \]
    
    Free products with amalgamation have a certain uniqueness property of conjugacy classes, which will be stated soon. First, we must recall the definition of cyclically reduced elements.
    
    \begin{definition}
        A sequence $c_1,...,c_n$ (with $n \geq 0$) of elements of $\langle G*H, A=B, \psi \rangle$ is called reduced if:
        \begin{enumerate}
            \item Each $c_i$ is in one of the factors $G$ or $H$.
            \item Successive $c_i,c_{i+1}$ come from different factors.
            \item If $n > 1$, no $c_i$ is in $A$ or $B$.
            \item If $n = 1$, $c_1 \neq 1$.
        \end{enumerate}
        
        A sequence $c_1,...,c_n$ of elements of $\langle G * H, A = B, \phi \rangle$ is called cyclically reduced if all its cyclic permutations (i.e. $c_2,...,c_n,c_1$, etc.) are reduced.
        
        An element $u \in \langle G * H, A = B, \phi \rangle$ is called cyclically reduced if there exists a cyclically reduced sequence $c_1,...,c_n$ such that $u = c_1 \cdot ... \cdot c_n$ (in this case the sequence $(c_i)_{i=1}^n$ is said to represent $u$).
    \end{definition}
    
    We remark that every element of $\langle G * H, A = B, \phi \rangle$ is conjugate to a (not necessarily unique) cyclically reduced element. Recall the following theorem (Theorem 2.8 in chapter IV of~\cite{LS}):
    
    \begin{theorem*}[Conjugacy Theorem for Free Products with Amalgamation]
        Let $P = \langle G * H, A = B, \phi \rangle$ be a free product with amalgamation. Let $u \in P$ be a cyclically reduced element, and let $c_1, ..., c_n$ be any cyclically reduced sequence with $u = c_1 \cdot ... \cdot c_n$ where $n \geq 2$. Then every cyclically reduced conjugate of $u$ can be obtained by cyclically permuting $c_1 \cdot \cdot \cdot c_n$ and then conjugating by an element of the amalgamated part~$A$: if $v \in P$ is cyclically reduced and conjugate to $u$, then $\exists 0 \leq k < n$ and $\exists a \in A$ such that $v = a \cdot c_k \cdot ... \cdot c_n \cdot c_1 \cdot ... \cdot c_{k-1} a^{-1}$.
    \end{theorem*}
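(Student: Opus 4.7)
The plan is to induct on the syllable length $\ell(w)$ of a conjugating element $w$ satisfying $v = wuw^{-1}$, using the Normal Form Theorem for amalgamated free products as the main technical input. First I would invoke the Normal Form Theorem from chapter IV of~\cite{LS}: after fixing left transversals for $A$ in $G$ and for $B$ in $H$, every element of $P$ has a unique expression $a \cdot x_1 \cdots x_k$ with $a \in A$ and the $x_i$ nontrivial coset representatives from alternating factors. Two immediate consequences I would use repeatedly are that the cyclic syllable length of a cyclically reduced element of length $\geq 2$ is conjugation-invariant, and that right multiplication by an $A$-element preserves syllable length.

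In the base case $\ell(w) = 0$, i.e.\ $w \in A$, the conclusion holds by taking the trivial cyclic permutation and $a = w$: one has $v = w \cdot (c_1 \cdots c_n) \cdot w^{-1}$, which fits the claimed form directly.

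For the inductive step, write $w = w' d_m$ with $d_m$ a nontrivial coset representative and $\ell(w') = \ell(w) - 1$; say $d_m \in G \setminus A$ (the other case is symmetric), and, up to first replacing $u$ by a cyclic permutation (which is legitimate since the conclusion itself is only claimed up to cyclic permutation), assume $c_1 \in G$. I would split on whether $d_m c_1$ lies in $A$. If $d_m c_1 \notin A$, then $(d_m c_1,\, c_2,\, \ldots,\, c_n,\, d_m^{-1})$ represents $d_m u d_m^{-1}$ as a cyclically reduced sequence of length $n+1$ (the junction check $d_m^{-1} \cdot d_m c_1 = c_1 \notin A$ uses hypothesis (3) of cyclic reducedness of $u$); applying the inductive hypothesis to $v = w'(d_m u d_m^{-1})w'^{-1}$ would then force the cyclic syllable length of $v$ to equal $n+1$, contradicting conjugation-invariance of cyclic syllable length (since $v$ is conjugate to $u$, which has length $n$). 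So this subcase cannot occur. If $d_m c_1 = a \in A$, substituting $d_m = a c_1^{-1}$ and $d_m^{-1} = c_1 a^{-1}$ yields the clean identity $d_m u d_m^{-1} = a \cdot (c_2 c_3 \cdots c_n c_1) \cdot a^{-1} = a u^{(1)} a^{-1}$, where $u^{(1)}$ is the cyclic shift of $u$ by one position. Hence $v = (w'a) u^{(1)} (w'a)^{-1}$ with $\ell(w'a) = \ell(w') = \ell(w) - 1$, and the inductive hypothesis applied to $v$ as a conjugate of $u^{(1)}$ produces the desired cyclic permutation (any cyclic permutation of $u^{(1)}$ is a cyclic permutation of $u$) and $A$-conjugator.

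The main obstacle is the length-comparison in the first subcase: one must be confident that conjugation-invariance of cyclic syllable length genuinely rules out the length-$(n+1)$ scenario. This is precisely where the Normal Form Theorem does the heavy lifting, and where hypotheses (3) and (4) of cyclic reducedness — that no $c_i$ lies in $A$ or $B$ — are crucial, since without them the computed sequences could harbor hidden collapses into the amalgamated subgroup, invalidating the length counts on which the whole induction rests.
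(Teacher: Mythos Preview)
The paper does not prove this statement: it is quoted verbatim as Theorem~2.8 of chapter~IV of~\cite{LS} and used as a black box. There is no in-paper argument for you to compare against.

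That said, your inductive scheme contains a genuine error in the first subcase. You assert that when $d_m c_1 \notin A$ the sequence $(d_m c_1,\, c_2,\, \ldots,\, c_n,\, d_m^{-1})$ is cyclically reduced of length $n+1$. It is not: both the last term $d_m^{-1}$ and the first term $d_m c_1$ lie in the same factor $G$, so the cyclic-adjacency requirement ``successive terms come from different factors'' fails at the wrap-around. (Indeed, any cyclically reduced sequence of length $\geq 2$ has \emph{even} length, so length $n+1$ is impossible on parity grounds alone.) Your ``junction check'' $d_m^{-1}\cdot d_m c_1 = c_1 \notin A$ is the wrong criterion---cyclic reducedness demands alternating factors, not merely that the product of adjacent terms avoid $A$. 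Merging those two $G$-terms gives back $c_1$, so the cyclic syllable length of $d_m u d_m^{-1}$ is $n$, not $n+1$, and your length contradiction evaporates. You also cannot invoke the inductive hypothesis on $d_m u d_m^{-1}$ directly, since it is not cyclically reduced.

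A related slip: ``replacing $u$ by a cyclic permutation'' to force $c_1 \in G$ also replaces the conjugator $w$ by $w\cdot(c_1\cdots c_{j-1})$, which may be longer---so this move is not free inside an induction on $\ell(w)$. The standard fix (as in~\cite{LS}) is to analyse both junctions $d_m\,c_1$ and $c_n\,d_m^{-1}$ simultaneously and show that at least one of them must collapse into $A$, given that the fully reduced word $v$ has the same length as $u$; alternatively one strengthens the inductive statement to track reduced (not only cyclically reduced) expressions.
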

    
    Denote the conjugacy relation in a group by $\sim$. For any group $G$, denote the conjugacy class of an element $x \in G$ by $[x]_G$.

    The Conjugacy Theorem implies one can define a length on elements of $P = \langle G*H , A=B, \phi \rangle$ by
    \begin{myequation}
        len: P \to \Z_{\geq 0} ,\\
        u \mapsto n ,
    \end{myequation}
    where $n$ is the length of a cyclically reduced sequence $c_1,...,c_n$ such that $u \sim \Pi_j c_j$. This is well defined by the Conjugacy Theorem, and is obviously conjugation-invariant. Note that $len(u) = 0 \iff u = 1$ and $len(u) = 1$ if and only if $u$ is conjugate to an element of $G \cup H \subset P$. \\
    
    Let us denote the following groups:
    \begin{myequation}
        H_1 = \langle g_1, g_2 \rangle \simeq F_2 , \ H_2 = \langle g_3, g_4 \rangle \simeq F_2 ,\\
        A = \langle [g_1,g_2] \rangle < H_1 , \ B = \langle [g_3,g_4] \rangle < H_2 ,
    \end{myequation}
    and denote by $\phi: A \to B$ the isomorphism $[g_1,g_2] \mapsto [g_3,g_4]^{-1}$. Let $F_3 = \langle a,b,c \rangle$ be the free group with three generators, and let $\pi_1(\Sigma_2) = \langle g_1,g_2,g_3,g_4 | [g_1,g_2][g_3,g_4 \rangle = \langle H_1 * H_2, A = B, \phi \rangle$ be the first homotopy group of a closed oriented surface of genus 2.
    
    Consider the homomorphism $\varphi: F_3 \to \pi_1(\Sigma_2)$ defined by
    \begin{myequation}
        a \mapsto g_1 ,\\
        b \mapsto g_2 g_1 g_2^{-1} ,\\
        c \mapsto g_3 .
    \end{myequation}
    
    The next claim gives a restriction on non-injectivity of conjugacy classes by $\varphi$.
    
    \begin{claim}\label{cl:1}
    Let $r,s \in F_3$, and assume that $\varphi(r) \sim \varphi(s)$ in $\pi_1(\Sigma_2)$ and $r \not\sim s$ in $F_3$. Then exactly one of the following holds:
    \begin{itemize}
        \item $\exists 0 \neq j \in \Z$ such that $\varphi(r),\varphi(s)$ are conjugate to $g_1^j$ (in $\pi_1(\Sigma_2)$).
        \item $\exists 0 \neq j \in \Z$ such that $\varphi(r),\varphi(s)$ are conjugate to $g_3^j$ (in $\pi_1(\Sigma_2)$).
    \end{itemize}
    
    In other words, the only conjugacy classes in $F_3$ merged by the homomorphism $\varphi$ are the classes $[c^j]_{F_3}$ which merge with $[(ab^{-1}c)^j]_{F_3}$, and $[a^j]_{F_3}$ which merge with $[b^j]_{F_3}$ ($0 \neq j \in \Z$). These correspond to the following conjugacy classes in $\pi_1(\Sigma_2)$: $[g_3^j]_{\pi_1(\Sigma_2)} = [g_4 g_3^j g_4^{-1}]_{\pi_1(\Sigma_2)}$ and $[g_1^j]_{\pi_1(\Sigma_2)} = [g_2 g_1^j g_2^{-1}]_{\pi_1(\Sigma_2)}$.
    \end{claim}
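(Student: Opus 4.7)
The plan is to combine injectivity of $\varphi$ with the Conjugacy Theorem for the amalgamated free product $\pi_1(\Sigma_2)=H_1*_A H_2$. First I would verify that $\varphi$ is injective: the image $\Ima(\varphi)=\langle g_1,g_2g_1g_2^{-1},g_3\rangle$ inherits from the Bass--Serre action on the tree of $\pi_1(\Sigma_2)$ a graph-of-groups decomposition $K*_A K'$, where $K=\langle g_1,g_2g_1g_2^{-1}\rangle\subset H_1$ and $K'=\langle g_3,[g_3,g_4]^{-1}\rangle\subset H_2$ are both free of rank $2$, so $\Ima(\varphi)$ is free of rank $3$, matching $F_3$. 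I would also record the identification $\varphi(ab^{-1})=[g_1,g_2]$ (so $A\subset\Ima(\varphi)$), the fact that $\varphi(w)\in A$ iff $w\in\langle ab^{-1}\rangle$ for $w\in\langle a,b\rangle$, and that $\varphi(c^l)\notin B$ for $l\ne 0$ (by abelianization in $H_2$).

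Next I place $r,s$ in cyclically reduced form in $F_3=\langle a,b\rangle *\langle c\rangle$, say $r=u_1\cdots u_m$, $s=v_1\cdots v_{m'}$, and proceed by cases on the cyclically reduced length $\ell$ of $\varphi(r)$ in $\pi_1(\Sigma_2)$. In the main case $\ell\ge 2$, after absorbing any $\varphi(u_j)\in A$ into adjacent $g_3$-powers via the amalgamation, the $\varphi$-image sequences become cyclically reduced in $\pi_1(\Sigma_2)$ of length $\ge 2$. The Conjugacy Theorem then forces a relation of the form $\varphi(s)=a\cdot\varphi(u_{k+1}\cdots u_m u_1\cdots u_k)\cdot a^{-1}$ for some $a\in A$ and some rotation $k$. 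Writing $a=\varphi((ab^{-1})^p)$ and using injectivity of $\varphi$ lifts the equation to $F_3$, giving $s\sim_{F_3}r$---contradicting the hypothesis. Thus no merges occur when $\ell\ge 2$.

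The case $\ell\le 1$ is where I expect the main obstacle. Here $\varphi(r)$ is conjugate in $\pi_1(\Sigma_2)$ to an element of $H_1$ or $H_2$ (or of $A$), and Bass--Serre theory reduces the $\pi_1(\Sigma_2)$-conjugacy problem to conjugacy inside the relevant factor. This leaves two parallel sub-problems: do the inclusions $K\hookrightarrow H_1$ and $K'\hookrightarrow H_2$ merge any conjugacy classes beyond $[a^j]\leftrightarrow[b^j]$ and $[c^j]\leftrightarrow[(ab^{-1}c)^j]$ respectively? I plan to resolve both via the folded Stallings graphs of $K$ and $K'$: each has two vertices $v_0,v_1$ joined by a single edge (labelled by the ``other'' generator $g_2$, resp.\ $g_4$), with self-loops at each vertex labelled by the ``main'' generator ($g_1$, resp.\ $g_3$). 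A classical analysis of loop lifts in such a graph shows that a loop in the ambient free group admits two distinct lifts to the Stallings cover precisely when it is a power of the main generator, and these two lifts correspond exactly to the two $F_3$-conjugacy classes predicted by the claim. The residual subcase $\varphi(r)\in A$ is handled by the same analysis: a power of $[g_1,g_2]=[g_3,g_4]^{-1}$ has only one lift in either Stallings graph, so no additional merge occurs.
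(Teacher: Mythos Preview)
Your strategy coincides with the paper's: exploit the splitting $\pi_1(\Sigma_2)=H_1*_A H_2$, note that $\varphi(ab^{-1})=[g_1,g_2]$ generates $A$, pass to cyclically reduced sequences (absorbing the $(ab^{-1})^j$-syllables into the adjacent $c$-blocks), apply the Conjugacy Theorem when the length is $\ge 2$, and pull back through the injective $\varphi$ to force $r\sim s$ in $F_3$. The paper does exactly this, taking injectivity of $\varphi$ for granted rather than deriving it from Bass--Serre theory as you do.

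The only genuine divergence is in the short case $\ell\le 1$. The paper simply lists the conjugacy classes of $G=\langle a,b\rangle$ (namely $[1]$, $[a^k]$, $[b^l]$, and $[\prod a^{k_i}b^{l_i}]$), writes down their $\varphi$-images in $H_1$, and reads off by inspection that the only coincidence is $[a^j]\mapsto[g_1^j]\leftarrow[b^j]$; the $H_2$ side is declared analogous. Your Stallings-graph argument reaches the same conclusion more structurally: the folded core of $K=\langle g_1,g_2g_1g_2^{-1}\rangle$ (and of $K'=\langle g_3,[g_3,g_4]^{-1}\rangle$) is the ``barbell'' with a single $g_2$- (resp.\ $g_4$-) edge and a $g_1$- (resp.\ $g_3$-) self-loop at each end, so a cyclic word has two lifts precisely when it avoids the bridge, i.e.\ is a power of $g_1$ (resp.\ $g_3$). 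This is a legitimate and slightly cleaner alternative; it also makes the $H_2$ side genuinely symmetric (on the $F_3$ side the relevant free factor is $\langle c,ab^{-1}\rangle$, not $\langle c\rangle$, which the paper glosses over with ``analogous''), and it transparently covers the residual $A$-case and the cross-factor case (both factors see a unique lift of $[g_1,g_2]^k$, so no extra merge). In short: same proof architecture, with a more conceptual endgame on your side and a more bare-hands one in the paper.
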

    
    \begin{proof}
        Note that $\varphi(a)$ and $\varphi(b)$ are both elements in the same factor $H_1$, and $\varphi(c) \in H_2$. Partition $r$ and $s$ into sequences according to the partition $\{a,b,c\} = \{a,b\} \cup \{c\}$, i.e. concatenate consecutive symbols from $\langle a,b \rangle$, then perform the following steps, until no steps can be performed:
        
        \begin{itemize}
        \item 
            Concatenate elements of the sequence of the form $(ab^{-1})^j$ (for some $j \in \Z$) to the previous and next elements in the sequence; do this for all occurrences of $(ab^{-1})^j$. I.e, if the sequence is $g_1, h_1, ab^{-1}, h_2, (ab^{-1})^{-2}, h_3$ (with all $g_i \in \langle a,b \rangle, h_i \in \langle c \rangle$), the resulting sequence after this step will be $g_1, h_1 ab^{-1} h_2 (ab^{-1})^{-2} h_3$.
        \item
            If the first and last elements of the sequence are from the same factor (in $\{a,b\},\{c\}$), concatenate them cyclically: i.e, if the sequence is $ab, c, b$, the resulting sequence after this step will be $bab, c$.
        \end{itemize}
        
        Doing this results in sequences $(r_i)_{i=1}^n,(s_i)_{i=1}^m$ such that $r \sim \Pi_i r_i, s \sim \Pi_i s_i$ (in $F_3$) and the sequences $(\varphi(r_i)), (\varphi(s_i))$ are cyclically reduced in $\pi_1(\Sigma_2)$ (since the only way to generate an element of $A$ or $B$ from images of $a,b,c$ is $\varphi(ab^{-1})^j = [g_1,g_2]^j \in A$; this can be seen from the definition of $\varphi$). For example, if $r$ were the element $abcba^{-1}cac^{-1}b$, we first partition according to $\{a,b\},\{c\}$ to get $ab, c, ba^{-1}, c, a, c^{-1}, b$, then concatenate powers of $ab^{-1}$ to get $ab, cba^{-1}c, a, c^{-1}, b$, and finally concatenate the first and last elements to get the sequence $(r_i) = bab, cba^{-1}c, a, c^{-1}$. Note that the resulting sequence is not uniquely defined, but any sequence which is the result of these steps will do for our purposes. \\
        
        Now $(\varphi(r_i)), (\varphi(s_i))$ are cyclically reduced sequences of $\pi_1(\Sigma_2)$, and $\Pi_{i=1}^m \varphi(s_i) \sim \varphi(s) \sim \varphi(r) \sim \Pi_{i=1}^n \varphi(r_i)$. Assume $n \geq 2$, we will reach a contradiction. By the Conjugacy Theorem for Free Products with Amalgamation, $\exists \alpha \in A, 0 \leq k < n$ such that
        \begin{equation}
        \label{eq:1}
            \Pi_i \varphi(s_i) = \alpha \cdot \varphi(r_k) \varphi(r_{k+1}) ... \varphi(r_n) \varphi(r_1) ... \varphi(r_{k-2}) \varphi(r_{k-1}) \cdot \alpha^{-1} .
        \end{equation}
        
        Since $A \subset \Ima \varphi$, $\alpha = \varphi(\sigma)$ for some $\sigma \in F_3$. Therefore, pulling back Equation~\ref{eq:1} through $\varphi$, we get the following equation in $F_3$:
        \begin{myequation}
            \Pi_i s_i = \sigma \cdot r_k r_{k+1} ... r_{k-2} r_{k-1} \sigma^{-1} .
        \end{myequation}
        Note that this can be done since $\varphi$ is a monomorphism.
        
        Now, it can be seen that $s \sim \Pi_i s_i = \sigma r_k r_{k+1} ... r_{k-2} r_{k-1} \sigma^{-1} \sim r$ in $F_3$, contradicting our assumption. Therefore, $n \leq 1$. By symmetry of the above argument with respect to $r$ and $s$, $m \leq 1$ as well. If $m = 0$ or $n = 0$, we get that either $r=1$ or $s=1$, which is a contradiction to the assumptions, so $m = n = 1$. \\

        Since $n = m = 1$, our sequences from above are $(r_i) = (r), (s_i) = (s)$, so by construction $\varphi(r), \varphi(s)$ are in one of the factors $H_1,H_2$. We shall consider the case $\varphi(r),\varphi(s) \in H_1$ and conclude that $\exists j \in \Z : \varphi(r),\varphi(s) \sim g_1^j$ in $\pi_1(\Sigma_2)$. The other case, where $\varphi(r),\varphi(s) \in H_2$ and we conclude that $\exists j \in \Z : \varphi(r),\varphi(s) \sim g_3^j$ in $\pi_1(\Sigma_2)$), is analogous.
        
        Since $\varphi(r),\varphi(s) \in H_1$, note that $r,s \in \langle a,b \rangle$. Recall, it was assumed that $r \not\sim s$ and $\varphi(r) \sim \varphi(s)$, and want to show that $\exists j \in \Z: \varphi(r), \varphi(s) \sim g_1^j$.
        Denote $G = \langle a,b \rangle$, and $\psi = \varphi \restriction_G : G \to H_1$. Note that the conjugacy classes of $G$ are:
        \begin{enumerate}
            \item $[1]_G$;
            \item $[\Pi_i a^{k_i}b^{l_i}]_G$ for some $0 \neq k_i,l_i \in \Z$;
            \item $[a^k]_G$ for some $0 \neq k \in \Z$;
            \item $[b^l]_G$ for some $0 \neq l \in \Z$,
        \end{enumerate}
        where some of the conjugacy classes listed in case 2 above are not distinct (i.e. $[aba^2b^2]_G = [a^2b^2ab]_G$); this will not matter to our argument. Define $\tilde{\psi}$, a function from the set of conjugacy classes of $G$ to the set of conjugacy classes of $H_1$, by
        \begin{myequation}
            \tilde{\psi}([x]_G) = [\psi(x)]_{H_1} .
        \end{myequation}
        Our assumptions can be rewritten as $[r]_G \neq [s]_G, \tilde{\psi}([r]_G) = \tilde{\psi}([s]_G)$, and we want to show $\tilde{\psi}([r]_G) = [g_1^j]_{H_1}$ for some $0 \neq j \in \Z$. Calculate $\tilde{\psi}$ for all the conjugacy classes of $G$ as listed above:
        \begin{enumerate}
            \item $\tilde{\psi}([1]_G) = [1]_{H_1}$;
            \item $\tilde{\psi}([\Pi_i a^{k_i}b^{l_i}]_G) = [\Pi_i g_1^{k_i} g_2 g_1^{l_i} g_2^{-1}]_{H_1}$ ($\forall i: 0 \neq k_i,l_i \in \Z$);
            \item $\tilde{\psi}([a^k]_G) = [g_1^k]_{H_1}$ ($0 \neq k \in \Z$);
            \item $\tilde{\psi}([b^l]_G) = [g_2 g_1^l g_2^{-1}]_{H_1} = [g_1^l]_{H_1}$ ($0 \neq l \in \Z$).
        \end{enumerate}
        
        Therefore:
        \begin{enumerate}
            \item If $\tilde{\psi}([r]_G) = \tilde{\psi}([s]_G) = [1]_{H_1}$, then $\psi(r) = \psi(s) = 1$ and then $r = s = 1$ (since $\psi$ is a monomorphism), in contradiction.
            \item If $\tilde{\psi}([r]_G) = \tilde{\psi}([s]_G) = [\Pi_i g_1^{k_i} g_2 g_1^{l_i} g_2^{-1}]_{H_1}$, then $[r]_G = [s]_G = [\Pi_i a^{k_i}b^{l_i}]_G$, in contradiction.
        \end{enumerate}
        
        The only cases which are left are $\tilde{\psi}([r]_G) = \tilde{\psi}([s]_G) = [g_1^j]$ for some $0 \neq j \in \Z$.
    \end{proof}
    
    \begin{remark}
    In exactly the same way, one can prove the following:
    
    Let $n \geq 2$, and define
    \begin{myequation}
        \varphi: F_{2n-1} = \langle a_1,...,a_{2n-1} \rangle \to \pi_1(\Sigma_n) = \langle g_1,...,g_{2n} | [g_1,g_2][g_3,g_4]...[g_{2n-1},g_{2n}] \rangle ,\\
        a_{2i-1} \mapsto g_{2i-1} \ (\forall 1 \leq i \leq n) ,\\
        a_{2i} \mapsto g_{2i} g_{2i-1} g_{2i}^{-1} \ (\forall 1 \leq i < n) .
    \end{myequation}
    Let $r,s \in F_{2n-1}$, and assume that $\varphi(r) \sim \varphi(s)$ in $\pi_1(\Sigma_n)$ and $r \not\sim s$ in $F_{2n-1}$. Then $\exists 1 \leq i \leq n, 0 \neq j \in \Z$ such that $\varphi(r),\varphi(s) \sim g_{2i-1}^j$ in $\pi_1(\Sigma_n)$. \\
    \end{remark}
    
    The lemma is a corollary of Claim~\ref{cl:1}.
    
    \begin{proof}[Proof of Lemma~\ref{lemma:2}]
        Define
        \begin{myequation}
            A : F_3 \to F_3 ,\\
            a \mapsto ac ,\\
            b \mapsto b^{-1}c ,\\
            c \mapsto c .
        \end{myequation}
        This is an automorphism of $F_3$. Note that $i_* = \varphi \circ A$ (with $\varphi$ as defined above). We want to use Claim~\ref{cl:1} with $r = A(\beta), s = A(\delta)$:
        \begin{myequation}
            A(\beta) = (ac)^{m_1} (b^{-1}c)^{n_1} ... (ac)^{m_p} (b^{-1}c)^{n_p} ,\\
            A(\delta) = (ac)^{k_1} c^{-u_1} (b^{-1}c)^{l_1} c^{v_1} ... (ac)^{k_p} c^{-u_p} (b^{-1}c)^{l_p} c^{v_p} .
        \end{myequation}
        
        Assume by contradiction that $\delta \not\sim \beta$ in $F_3$. Then $A(\beta) \not\sim A(\delta)$ in $F_3$, since $A$ is an automorphism of $F_3$, and then by Claim~\ref{cl:1}, $\exists 0 \neq j \in \Z$ such that one of the following holds:
        \begin{itemize}
            \item $[A(\beta)]_{F_3} = [c^j]_{F_3}$;
            \item $[A(\beta)]_{F_3} = [(ab^{-1}c)^j]_{F_3}$;
            \item $[A(\beta)]_{F_3} = [b^j]_{F_3}$;
            \item $[A(\beta)]_{F_3} = [a^j]_{F_3}$.
        \end{itemize}
        
        To see that, for example, $[A(\beta)]_{F_3} = [c^j]_{F_3}$ leads to a contradiction, consider the projection
        \begin{myequation}
            p_{a,b} : \langle a,b,c \rangle \to \langle a,b \rangle ,\\
            a \mapsto a, b \mapsto b, c \mapsto 1 .
        \end{myequation}
        If $[A(\beta)]_{F_3} = [c^j]_{F_3}$, then $[(a)^{m_1} (b^{-1})^{n_1} ... (a)^{m_p} (b^{-1})^{n_p}]_{\langle a,b \rangle} = [p_{a,b}(A(\beta))]_{\langle a,b \rangle} = [p_{a,b}(c^j)]_{\langle a,b \rangle} = [1]_{\langle a,b \rangle}$. This implies that at least one of the $m_i,n_i$ is $0$, else no cancellation can occur in $(a)^{m_1} (b^{-1})^{n_1} ... (a)^{m_p} (b^{-1})^{n_p}$. This is a contradiction to the assumption $\forall i: 0 \neq m_i,n_i$.
        
        The other three cases are dealt with similarly: the case $[A(\beta)]_{F_3} = [(ab^{-1}c)^j]_{F_3}$ using the projection
        \begin{myequation}
            p_{ab^{-1}c,c} : \langle a,b,c \rangle \to \langle a,b \rangle ,\\
            ab^{-1}c \mapsto a, c \mapsto b, a \mapsto 1 .
        \end{myequation}
        (note this is well defined since $\{ab^{-1}c, a, c\}$ is a basis of $F_3$), and the cases $[A(\beta)]_{F_3} = [b^j]_{F_3}$, $[A(\beta)]_{F_3} = [a^j]_{F_3}$ using $p_{a,b}$ defined above.
        
        Since all cases lead to a contradiction, we conclude that $\delta \sim \beta$ in $F_3$, as desired. 
    \end{proof}
    
    Recall $\pi_1(\Sigma_2) = \langle H_1*H_2 , A=B , \phi \rangle$ and recall the definition of $len: \pi_1(\Sigma_2) \to \Z_{\geq 0}$. We will also make use of the following claim:
    \begin{claim}
    \label{cl:length}
        Let $r \in \N, k_i \in \Z$ for $i \in \{1,...,2r\}$ with $|k_i| \geq 2$ for all $i$, and consider $\pi_1(\Sigma_2) = \langle g_1,g_2,g_3,g_4 | [g_1,g_2][g_3,g_4] \rangle$. Then
        \[
            len \left( \Pi_{i=1}^r (g_1 g_3)^{k_{2i-1}} (g_2 g_1^{-1} g_2^{-1}g_3)^{k_{2i}} \right) > 1 .
        \]
    \end{claim}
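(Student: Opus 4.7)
The plan is to produce a cyclically reduced alternating representation of $w$ in $\pi_1(\Sigma_2) = \langle H_1 * H_2, A=B, \phi \rangle$ and show its length is at least $2$. Write $w$ as the product of $2r$ syllables $u_1^{k_1} u_2^{k_2} \cdots u_1^{k_{2r-1}} u_2^{k_{2r}}$ where $u_1 = g_1 g_3$ and $u_2 = g_2 g_1^{-1} g_2^{-1} g_3$, and unfold each syllable into an alternating string of $H_1/H_2$-letters: $u_1^{k}$ with $k > 0$ unfolds as $g_1, g_3, g_1, g_3, \ldots$, while $u_1^{-k}$ unfolds as $g_3^{-1}, g_1^{-1}, g_3^{-1}, g_1^{-1}, \ldots$, and similarly for $u_2^{\pm k}$ with $g_2 g_1^{\mp 1} g_2^{-1}$ in place of $g_1^{\pm 1}$. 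Every letter of this initial string is one of $g_1^{\pm 1}, g_2 g_1^{\pm 1} g_2^{-1} \in H_1 \setminus A$ or $g_3^{\pm 1} \in H_2 \setminus B$, so the string has $2 \sum_i |k_i| \geq 8r$ letters, alternating within each syllable.

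I would then process the $2r - 1$ interior interfaces together with the cyclic wrap-around interface. A short case analysis on the signs of each adjacent pair $(k_j, k_{j+1})$ shows that at every interface the two boundary letters either lie in different factors (no reduction), or they lie in the same factor, in which case either both are $H_1$-letters combining into a single $H_1$-letter (losing $1$ letter), or they are $g_3^{\pm 1}$ and $g_3^{\mp 1}$ which annihilate as $1 \in B$ and force the two neighbouring $H_1$-letters to combine as well (losing $3$ letters in total). Either way, each of the $2r$ interfaces kills at most $3$ letters.

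The key algebraic check is that the combined $H_1$-element never lies in $A = \langle [g_1, g_2] \rangle$, so no reduction cascades further. A direct computation shows that the four possible combined elements $g_1 g_2 g_1 g_2^{-1}$, $g_1^{-1} g_2 g_1^{-1} g_2^{-1}$, $g_2 g_1^{-1} g_2^{-1} g_1^{-1}$, $g_2 g_1 g_2^{-1} g_1$ all map to $\pm 2 g_1 \neq 0$ in the abelianization of $H_1$, while $A$ abelianizes to $0$. After processing all $2r$ interfaces we therefore end with a genuine cyclically reduced alternating sequence of length at least $8r - 6r = 2r \geq 2$, giving $len(w) \geq 2 > 1$.

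The hypothesis $|k_i| \geq 2$ is used precisely to ensure each syllable has at least $4$ letters, so the reductions at its two ends act on disjoint portions of the string and the per-interface loss of $\leq 3$ letters does not over-count. The main obstacle is the uniform but tedious interface case analysis; the only non-obvious algebraic input is the abelianization check that keeps combined elements out of $A$.
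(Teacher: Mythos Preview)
Your argument is correct. The interface-by-interface reduction you describe really does produce a cyclically reduced alternating sequence of length at least $2r$: the four merged $H_1$-elements you list are exactly the ones that arise, the abelianisation check keeps them out of $A$, and the hypothesis $|k_i|\ge 2$ guarantees that the two letters each interface touches on either side are disjoint from those touched by the neighbouring interface, so no cascading occurs.

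The paper's proof follows a different route. Instead of working directly in $\pi_1(\Sigma_2)=\langle H_1*H_2,\,A=B,\,\phi\rangle$, it first applies the factor-preserving homomorphism $p_{13}\colon \pi_1(\Sigma_2)\to F_2=\langle h_1,h_3\rangle$ killing $g_2,g_4$; since $p_{13}$ preserves factors one has $len(p_{13}(w))\le len(w)$, and the problem reduces to showing that $\prod_i (h_1h_3)^{k_{2i-1}}(h_1^{-1}h_3)^{k_{2i}}$ has length $>1$ in the ordinary free product $\langle h_1\rangle*\langle h_3\rangle$. From there the paper does an interface case analysis very similar in spirit to yours, but in a setting where the amalgamated subgroup is trivial, so no ``not in $A$'' check is needed. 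What the projection buys is that the merged elements become powers of $h_1$, automatically nontrivial; what your direct approach buys is that you never leave $\pi_1(\Sigma_2)$ and need no auxiliary observation about how $len$ behaves under factor-preserving maps. The abelianisation trick you use to exclude $A$-membership plays the role that the projection plays in the paper.
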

    
    \begin{proof}
        Denote $\epsilon_i = sign(k_i) = \frac{k_i}{|k_i|}$. Consider
        \begin{myequation}
            p_{13}: \pi_1(\Sigma_2) \to F_2 = \langle h_1, h_3 \rangle ,\\
            g_1 \mapsto h_1 ,\\
            g_3 \mapsto h_3 ,\\
            g_2, g_4 \mapsto 1 .
        \end{myequation}
        
        Denote $w = \Pi_{i=1}^r (g_1 g_3)^{k_{2i-1}} (g_2 g_1^{-1} g_2^{-1} g_3)^{k_{2i}}$. Assume by contradiction $len(w) \leq 1$. Note that $p_{13}$ preserves factors, i.e. $p_{13}(H_1) = \langle h_1 \rangle, p_{13}(H_2) = \langle h_3 \rangle$. If $len(w) = 0$, then $w = 1$, so $p_{13}(w) = 1$, and then $len(p_{13}(w)) = 0$. Else, $len(w) = 1$, so $w \sim c_1$ with $c_1 \in H_1 < \pi_1(\Sigma_2)$ or $c_1 \in H_2 < \pi_1(\Sigma_2)$. WLOG assume $c_1 \in H_1$. Then $p_{13}(w) \sim p_{13}(c_1) \in p_{13}(H_1) = \langle h_1 \rangle$, and then $len(p_{13}(w)) = 1$. One reaches the same conclusion in both cases: $len(p_{13}(w)) \leq 1$ (note that this is $len$ in $F_2$, which is a free product of $\Z = \langle h_1 \rangle$ with $\Z = \langle h_3 \rangle$ and so is trivially a free product with amalgamation). As a remark, this procedure can be applied for any factor-preserving homomorphism $\psi$ to get $len(g) = len(\psi(g))$.
        
        Consider $p_{13}(w)$:
        \[
            p_{13}(w) = \Pi_{i=1}^r (h_1 h_3)^{k_{2i-1}} (h_1^{-1} h_3)^{k_{2i}} .
        \]
        This element is made of blocks: $(h_1 h_3)^{k_j}$ or $(h_1^{-1} h_3)^{k_j}$. Each of these blocks is reduced, so the only cancellations in the form of $p_{13}(w)$ given above can happen between two blocks. These are the options for cancellations between two blocks:

        \begin{itemize}
        \item If the first block is $(h_1 h_3)^{k_i}$ and the second is $(h_1^{-1} h_3)^{k_{i+1}}$, one of the following holds:
        \begin{myequation}
                \epsilon_i = \epsilon_{i+1} = 1: (h_1 h_3)^{|k_i|} (h_1^{-1} h_3)^{|k_{i+1}|} \rightsquigarrow \text{no cancellations} ; \\
                \epsilon_i = -\epsilon_{i+1} = 1: (h_1 h_3)^{|k_i|} (h_1^{-1} h_3)^{-|k_{i+1}|} \rightsquigarrow (h_1 h_3)^{|k_i|-1} h_1 h_1 (h_1^{-1} h_3)^{-|k_{i+1}|+1} \ (*) ; \\
                - \epsilon_i = \epsilon_{i+1} = 1: (h_1 h_3)^{-|k_i|} (h_1^{-1} h_3)^{|k_{i+1}|} \rightsquigarrow \text{no cancellations} ; \\
                \epsilon_i = \epsilon_{i+1} = -1: (h_1 h_3)^{-|k_i|} (h_1^{-1} h_3)^{-|k_{i+1}|} \rightsquigarrow \text{no cancellations}.
            \end{myequation}
            
            Every cancellation of type $(*)$ results in a reduced block starting and ending with $h_1$. Before it comes either $h_3$ or $h_1$ (since $(h_1^{-1} h_3)^{k_{i-1}}$ ends with one of these symbols), and after it comes either $h_1$ or $h_3^{-1}$ (for similar reasons). Thus after a cancellation of this type, none of the two ends of the resulting reduced block can be cancelled any further.
            
        \item If the first block is $(h_1^{-1} h_3)^{k_i}$ and the second is $(h_1 h_3)^{k_{i+1}}$, one of the following holds:
        \begin{myequation}
                \epsilon_i = \epsilon_{i+1} = 1: (h_1^{-1} h_3)^{|k_i|} (h_1 h_3)^{|k_{i+1}|} \rightsquigarrow \text{no cancellations} ; \\
                \epsilon_i = -\epsilon_{i+1} = 1: (h_1^{-1} h_3)^{|k_i|} (h_1 h_3)^{-|k_{i+1}|} \rightsquigarrow (h_1^{-1} h_3)^{|k_i|-1} h_1^{-1} h_1^{-1} (h_1 h_3)^{-|k_{i+1}|+1} \ (**) ; \\
                - \epsilon_i = \epsilon_{i+1} = 1: (h_1^{-1} h_3)^{-|k_i|} (h_1 h_3)^{|k_{i+1}|} \rightsquigarrow \text{no cancellations} ; \\
                \epsilon_i = \epsilon_{i+1} = -1: (h_1^{-1} h_3)^{-|k_i|} (h_1 h_3)^{-|k_{i+1}|} \rightsquigarrow \text{no cancellations}.
            \end{myequation}
            
            Every cancellation of type $(**)$ results in a reduced block starting and ending with $h_1^{-1}$. Before it comes either $h_3$ or $h_1^{-1}$ and after it comes either $h_1^{-1}$ or $h_3^{-1}$ (for reasons similar to above). Thus after a cancellation of this type, none of the two ends of the resulting reduced block can be cancelled any further.
        \end{itemize}
        
        Therefore, after at most $2r-1$ cancellations one reaches a reduced word. If it is not cyclically reduced, conjugate by the first block and perform one more reduction to get a cyclically reduced word $v$, conjugate to $p_{13}(w)$. Since every reduced block $(*)$ or $(**)$ above has length at least 2, and these are inserted without cancellations to our word $v$, we have $len(p_{13}(w)) = len(v) \geq 2 > 1$. This is a contradiction, so the initial assumption $len(w) \leq 1$ is false.
    \end{proof}

\section{Proof of the theorems}\label{sec:3}
    \subsection{Additional definitions}
        In the proofs of the results, we shall use the following definitions.
        
        \begin{definition}
            A continuous map between two manifolds $f: M \to N$ induces a map $\tau_f: \pi_0(\mathscr{L}M) \to \pi_0(\mathscr{L}N)$ by $\tau_f([\gamma]) = [f \circ \gamma]$, for $\gamma: S^1 \to M$. \\
            
            For any manifold $M$ and point $x \in M$, define the map
            \begin{myequation}
                \eta_{M,x} : \pi_1(M,x) \to \pi_0(\mathscr{L}M) , \\
                [\tilde{\gamma}]_{\pi_1(M,x)} \mapsto [\tilde{\gamma}]_{\pi_0(\mathscr{L}M)}.
            \end{myequation}
            
            where $\tilde{\gamma}: S^1 \to M$ is a loop.
        \end{definition}
        Note that the following diagram commutes for any continuous map $f: M \to N$:
        \begin{center}\begin{tikzcd}
            \pi_1(M,x) \arrow[r, "f_*"] \arrow[d, "\eta_{M,x}"] & \pi_1(N,f(x)) \arrow[d, "\eta_{N,f(x)}"] \\
            \pi_0(\mathscr{L}M) \arrow[r, "\tau_f"]         & \pi_0(\mathscr{L}N)
        \end{tikzcd}\end{center}
        Note additionally that for $\alpha, \beta \in \pi_1(M,x)$, $\eta_{M,x}(\alpha) = \eta_{M,x}(\beta)$ if and only if $\alpha$ and $\beta$ are conjugate in $\pi_1(M,x)$.
        
        \begin{definition}
            A word $w \in F_2 = \langle V,H \rangle$ is called \textit{balanced} if it is of the form $w = V^{N_1} H^{M_1} ... V^{N_r} H^{M_r}$ for some $r \in \N$, $N_j,M_j \in \Z \setminus \{0\}$.
        \end{definition}
        Note that any balanced word is cyclically reduced.
        
        \begin{definition}
            Let $\varphi: M \to M$ be a diffeomorphism. A fixed point $x$ of $\varphi$ is called non-degenerate if $d\varphi_x$ does not have 1 as an eigenvalue.
        \end{definition}
        
        The egg-beater construction uses dynamics on a space $C$ which is then embedded into a surface of genus 2. Thus we shall use the following definitions of pushing forward the dynamics along the embedding.
        
        Let $X,Y$ be compact topological spaces, and $i: X \hookrightarrow Y$ a continuous embedding. Let $f: X \to \R$ be a continuous map on $X$, and assume the following condition holds:
        \begin{equation}
        \label{eq:condition}
            \text{For any path-component $C$ of $Y \setminus i(X)$, $f \restriction_{i^{-1}(\partial C)}$ is constant.}
        \end{equation}
            
        Let $C_y$ be the path-component of $Y$ that contains $y \in Y$, and denote $D_i = \bigcup_{y \in \Ima(i)} C_y \subseteq Y$. For all $y \in D_i$, denote by $\gamma_{i,y}: [0,1] \to C_y$ a continuous path with $\gamma_{i,y}(0) = y$, $\gamma_{i,y}(1) \in \Ima(i)$, and such that if $\gamma_{i,y}(t) \in \Ima(i)$ for some $t \in [0,1]$, then $\gamma_{i,y} \restriction_{[t,1]}$ is constant. Note that if $y \in \Ima(i)$, then $\gamma_{i,y} \equiv y$.
        
        Denote the following, not necessarily continuous, map:
        \begin{myequation}
            b_i: D_i \to \Ima(i) ,\\
            y \mapsto \gamma_{i,y}(1) .
        \end{myequation}
        Define the following map, the \textit{pushforward} of $f$ through $i$:
        \begin{myequation}
            i_* f: D_i \to \R ,\\
            y \mapsto f \circ i^{-1} \circ b_i(y) .
        \end{myequation}
        By Condition~\ref{eq:condition}, this is a continuous map $D_i \to \R$ which is constant on $Y \setminus i(X)$, and doesn't depend on the choice of the $\gamma_{i,y}$s. Note also that if $f$ is smooth and constant in a neighborhood of $i^{-1}(\partial \ i(X))$ then $i_* f$ is also smooth. 
        
        Assume additionally that $X,Y$ are symplectic manifolds, $i$ is a symplectomorphism, and $f: S^1 \times X \to \R$ is a Hamiltonian function. Then $f$ induces the time-one-map of its flow; let it be denoted $F: X \to X$. The \textit{pushforward} of $F$ through $i$ is denoted $i_* F: Y \to Y$, and is the time-one-map of the flow induced by $i_* f$.
        
    \subsection{Review of the original proofs (genus $\geq 4$)}
    \label{sec:3.1}
        \subsubsection{Outline of the original proofs and dependencies of claims}
            First we shall outline the proofs of the original theorems, 1.3 from~\cite{PS14} and 1.1 from~\cite{10}. For concreteness, their statements are given here.
            
            \begin{theorem}[Theorem 1.3 from~\cite{PS14}]
            \label{thm:orig2}
                Let $\Sigma_4$ be a closed oriented surface of genus $\geq 4$, equipped with an area form $\sigma_4$, and $k \geq 2$ an integer. Then $powers_k(\Sigma_4,\sigma_4) = \infty$.
            \end{theorem}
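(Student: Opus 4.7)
The plan is to exhibit, for each $R > 0$, an explicit Hamiltonian diffeomorphism $\phi_\lambda$ with $d_H(\phi_\lambda, Powers_k) > R$ for $\lambda$ large enough, via the egg-beater construction. First I would fix a symplectic embedding $i: C \hookrightarrow \Sigma_4$ of the two-annulus space $C = C_V \cup C_H$ from Figure~\ref{fig:C}; the hypothesis $g \geq 4$ is used here to arrange $i(C)$ inside $\Sigma_4$ so that the induced map on free homotopy classes $\tau_i: \pi_0(\mathscr{L}C) \to \pi_0(\mathscr{L}\Sigma_4)$ is injective (enough genus is available to make $i(C)$ incompressible with distinct conjugacy-class images). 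Shear maps $f_V, f_H$ with profile as in Figure~\ref{fig:profile} and slope depending on $\lambda$ are then defined on $C_V, C_H$, pushed forward to $\Sigma_4$ via $i_*$, and $\phi_\lambda$ is taken to be a balanced word $((i_* f_V)^N (i_* f_H)^M)^r$ in $F_2 = \langle V, H \rangle$ with $r, N, M$ calibrated to $\lambda$.

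The core of the argument is to convert the dynamics of $\phi_\lambda$ into a Hofer lower bound. Next I would enumerate the fixed points of $\phi_\lambda$ whose orbits lie in a chosen non-contractible free homotopy class $\alpha_r \in \pi_0(\mathscr{L}\Sigma_4)$, using the piecewise-linear picture near each intersection of $C_V$ and $C_H$ (Figure~\ref{fig:profile2}); by injectivity of $\tau_i$ this count on $\Sigma_4$ matches the analogous count on $C$, and each fixed point carries an action that can be read off directly from the shear profile and concentrates in a window of width proportional to $\lambda$. This combinatorial input is then fed into the Floer homology of non-contractible orbits in the class $\tau_i(\alpha_r)$, which is well-defined on the closed aspherical surface $\Sigma_4$ since $\pi_2(\Sigma_4) = 0$ kills bubbling. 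Spectral invariants of this Floer complex give bi-invariant Lipschitz functionals on $Ham(\Sigma_4)$, and if $\phi_\lambda = \psi^k$ the cyclic $\Z/k$-action of $\psi$ forces the action spectrum in $\tau_i(\alpha_r)$ to organize into $\psi$-orbits of cardinality $k$. Choosing $\alpha_r$ to be a class that is not itself a $k$-th power of another class in $\pi_0(\mathscr{L}\Sigma_4)$, which is where the balanced word shape is crucial, obstructs this symmetry and forces a spectral mismatch of size linear in $\lambda$, yielding $d_H(\phi_\lambda, Powers_k) \geq c\lambda$.

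The main obstacle is the Floer-theoretic packaging: translating the explicit list of fixed points and their actions into a genuine Hofer-distance lower bound from $Powers_k$, rather than merely from the identity. This requires assembling a Lipschitz functional on $Ham(\Sigma_4)$ that is uniformly controlled on $Powers_k$, whose natural form is a difference of spectral invariants in carefully paired free homotopy classes combined with the $\Z/k$-symmetry obstruction above. Once this framework delivers its bound in terms of the combinatorial count in $\tau_i(\alpha_r)$, the geometric input from Steps 1 and 2 is essentially a piecewise-linear book-keeping exercise, and sending $\lambda \to \infty$ concludes the proof. The injectivity of $\tau_i$ is the single place where $g \geq 4$ is used, which is precisely the issue that Lemma~\ref{lemma:2} is designed to bypass in the genus 2 and 3 cases treated later.
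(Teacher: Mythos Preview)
Your proposal follows essentially the same architecture the paper outlines for the original \cite{PS14} proof: egg-beater dynamics on $C$, an incompressible embedding $i: C \hookrightarrow \Sigma_4$ (this is exactly where $g \geq 4$ enters, as you say), fixed-point enumeration in a chosen primitive free-homotopy class via injectivity of $\tau_i$, and a Floer-theoretic conversion to a Hofer lower bound from $Powers_k$. The paper is slightly more specific than your sketch --- the word is $w = (VH)^r$ rather than a general balanced word, the classes $\alpha_k'$ are calibrated via auxiliary rational parameters $\nu_j,\mu_j$ precisely so that the $2^{2r}$ fixed points have pairwise action gaps growing linearly in the egg-beater parameter (this is Proposition~\ref{prop:main2}, which is the actual quantitative input), and the Floer-theoretic packaging you correctly flag as the main obstacle is carried out in \cite{PS14} via persistence modules and barcodes rather than a bare $\Z/k$-symmetry argument --- but the strategy matches.
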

            
            \begin{theorem}[Theorem 1.1 from~\cite{10}]
            \label{thm:orig3}
                Let $\Sigma_4$ be a closed oriented surface of genus $\geq 4$, equipped with an area form $\sigma_4$. Then for any non-principal ultrafilter $\mathcal{U}$ on $2^\N$, there exists a monomorphism $F_2 \hookrightarrow Cone_\mathcal{U}(Ham(\Sigma_4),d_H)$.
            \end{theorem}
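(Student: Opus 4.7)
\emph{Setup.} My plan is to realise $F_2$ inside the cone via two eggbeater families whose iterated ``words'' have linearly growing Hofer norms. First I would fix an embedding $i : C \hookrightarrow \Sigma_4$ with the property that the induced map $\tau_i : \pi_0(\mathscr{L}C) \to \pi_0(\mathscr{L}\Sigma_4)$ is injective; for $g \geq 4$ this is achievable by placing the core circles of $C_V, C_H$ so that they generate a rank-$2$ free subgroup of $\pi_1(\Sigma_4)$ whose iterated-conjugacy classes remain pairwise distinct. Push the shear maps $f_V, f_H$ forward through $i$ to Hamiltonians $F_V, F_H \in Ham(\Sigma_4)$, and pick two distinct balanced words $w_1, w_2 \in \langle V, H \rangle$ with suitably different combinatorics; let $\phi_j \in Ham(\Sigma_4)$ be the eggbeater map obtained by substituting $V \mapsto F_V$, $H \mapsto F_H$ into $w_j$.

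\emph{Cone map.} For $u \in F_2 = \langle X, Y \rangle$ of word length $\ell$, set $\Phi_k(u) = u(\phi_1^k, \phi_2^k)$ and
\[
    \Psi(u) = \bigl[(\Phi_k(u))_{k \in \N}\bigr] \in Cone_\mathcal{U}(Ham(\Sigma_4), d_H).
\]
Subadditivity and bi-invariance of $d_H$ give $||\Phi_k(u)||_H \leq \ell \cdot k \cdot \max(||\phi_1||_H, ||\phi_2||_H)$, so $\Psi(u)$ lies in the cone, and $\Psi$ is a group homomorphism by the definition of multiplication in the cone. Proving injectivity reduces, by bi-invariance, to the lower bound $||\Phi_k(u)||_H \geq c_u \cdot k$ for every non-trivial $u \in F_2$ and all large $k$, with $c_u > 0$ independent of $k$.

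\emph{Counting orbits.} The lower bound will come from Floer homology of non-contractible orbits. The fixed points of $\Phi_k(u)$ lying in the ``product'' regions of the crossings $C_V \cap C_H$ are classified by combinatorial data recording which crossings the orbit visits under the generating isotopy, so each yields a canonical free homotopy class in $\pi_0(\mathscr{L}C)$. By injectivity of $\tau_i$, these classes descend without merging to $\pi_0(\mathscr{L}\Sigma_4)$. For each non-trivial $u$ I would select a ``characteristic'' class $\alpha_u \in \pi_0(\mathscr{L}\Sigma_4)$ realised only by orbits spelling out $u$ verbatim. A direct combinatorial count controls both the number of such characteristic orbits and the spread of their symplectic actions, which clusters in an interval of length $\Theta(k)$.

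\emph{Floer step and main obstacle.} The hardest step is the Floer-theoretic one: compute the filtered Floer homology of $\Phi_k(u)$ in the non-contractible class $\alpha_u$, and show it is non-zero in an action window whose width grows linearly in $k$. Once this is established, the Hofer norm is bounded below by the width of any action window in which the filtered Floer homology is non-vanishing, yielding $||\Phi_k(u)||_H \geq c_u \cdot k$ and hence $\Psi(u) \neq 1$. The delicate point is to rule out cancellations in the Floer differential between characteristic orbits of class $\alpha_u$ and competitors in the same class at comparable action. For this the argument would use asphericity of $\Sigma_4$ (so that Floer cylinders preserve free homotopy classes), an explicit action computation exploiting the piecewise-linear form of the shears, and the hyperbolic nature of the eggbeater dynamics, which stratifies orbits by combinatorial type into well-separated action levels. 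This is exactly where the embedding $i$ and the resulting control on $\pi_0(\mathscr{L}\Sigma_4)$ become indispensable, and where the later sections of the paper will have to do extra work when $g \in \{2,3\}$.
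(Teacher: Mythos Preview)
Your overall architecture is right --- eggbeater families, a cone homomorphism, and a Hofer lower bound extracted from Floer homology in a fixed free homotopy class --- but the specific construction you set up does not match the one that makes the Floer step go through, and this is a genuine gap rather than a cosmetic difference.

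In the paper (following~\cite{10}) the homomorphisms $\Phi_k : F_2 = \langle V,H\rangle \to Ham(\Sigma_4)$ send the generators \emph{directly} to the shear maps with parameter $k$, i.e.\ $\Phi_k(V)=i_*f_{k,V}$ and $\Phi_k(H)=i_*f_{k,H}$; since $f_{k,*}=(f_{1,*})^k$, this means $\Phi_k(w)=w(F_V^{\,k},F_H^{\,k})$. Thus for a fixed balanced $w=\prod_j V^{N_j}H^{M_j}$ the diffeomorphism $\Phi_k(w)$ is the eggbeater for a word with a \emph{fixed} number $r$ of syllables and exponents scaled by $k$. That is exactly the regime of Proposition~\ref{prop:main}: one gets $2^{2r}$ fixed points in the class $\tau_i(\alpha_{k,w})$, with actions given by~\eqref{eq:2}, hence action gaps of order $k$ among a \emph{bounded} set of generators, from which the persistence-module argument yields $\|\Phi_k(w)\|_H\ge C(w)\,k$.

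Your construction inserts an extra layer: you first pick balanced words $w_1,w_2$, set $\phi_j=w_j(F_V,F_H)$, and then define $\Phi_k(u)=u(\phi_1^{\,k},\phi_2^{\,k})$. Unwinding, $\Phi_k(u)$ is the eggbeater for the word $v_{k,u}=u(w_1^{\,k},w_2^{\,k})\in\langle V,H\rangle$ with \emph{fixed} shear parameter. This word has $\Theta(k)$ syllables with bounded exponents. Proposition~\ref{prop:main} is asymptotic in the shear parameter for a fixed word, so it does not apply; and even formally plugging into~\eqref{eq:2} with $r\sim k$ and bounded $N_j,M_j$, you get $2^{\Theta(k)}$ fixed points whose actions are spread over an interval of width $\Theta(k)$, so there is no reason for any single action gap --- and hence any bar in the barcode --- to be of order $k$. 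Your sentence ``the Hofer norm is bounded below by the width of any action window in which the filtered Floer homology is non-vanishing'' is also not the correct inequality: what one actually uses is the length of a finite bar (boundary depth / spectral spread), not mere non-vanishing on a window.

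The fix is simply to drop $w_1,w_2$ and take the two cone generators to be the sequences of shear maps themselves, $V\mapsto[(i_*f_{k,V})_k]$ and $H\mapsto[(i_*f_{k,H})_k]$; then your $\Phi_k$ coincides with the paper's, the fixed-point count stays bounded in $k$, and Proposition~\ref{prop:main} together with the persistence argument from~\cite{10} gives Theorem~\ref{thm:4} and hence Theorem~\ref{thm:orig3}. (The case where $w$ is conjugate to a power of $V$ or $H$ is handled separately, without Proposition~\ref{prop:main}.)
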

            
            Both their proofs involve several intermediate results. For clarity, Figure~\ref{fig:orig_depends} lists the dependencies between these results. The proofs are based on a specific construction of a manifold $C$, an embedding $i: C \hookrightarrow \Sigma_4$, and some dynamics on $C$, which induce dynamics on $\Sigma_4$. These constructions are all sketched in Subsection~\ref{sec:outline}, and given in detail further in this subsection. Propositions~\ref{prop:main},~\ref{prop:main2} (which are Propositions 5.11 of~\cite{10} and 5.1 of~\cite{PS14} respectively) are claims on the dynamics on $\Sigma_4$, and Claims~\ref{cl:2},~\ref{cl:2.2} are the respective claims on the dynamics on $C$.
            
            The deductions marked $**$ use "hard" Floer homology and persistence modules, and do not depend on the genus of $\Sigma_4$, therefore they can be taken as is to the case where the surface is of genus 2,3. The deductions marked $*$ are a consequence of the fact that $i$ is constructed to be \textit{incompressible}, i.e. it induces injections $\pi_1(C) \hookrightarrow \pi_1(\Sigma_4)$ and $\pi_0(\mathscr{L}C) \hookrightarrow \pi_0(\mathscr{L}\Sigma_4)$. This will not hold in the case where $\Sigma$ is of genus 2,3 (see Subsection~\ref{sec:3.2} below), and this is exactly where Lemma~\ref{lemma:2} comes into play. Claims~\ref{cl:2},~\ref{cl:2.2} are proved directly by careful analysis of the dynamics on $C$.
            
            This subsection details the construction $i: C \hookrightarrow \Sigma_4$, the dynamics on $C$ and $\Sigma_4$, states all the above claims and propositions, and outlines their proofs. For full details, see~\cite{PS14} and~\cite{10}; we describe their work bottom-up, with respect to the directions of Figure~\ref{fig:orig_depends}. \\
            
            \begin{figure}
                \centering
                \begin{tikzcd}
                    Claim~\ref{cl:2} \arrow[d,rightsquigarrow,"*"] & Claim~\ref{cl:2.2} \arrow[d,rightsquigarrow,"*"] & \bigg] in \ C \\
                    Proposition~\ref{prop:main} \arrow[d,rightsquigarrow,"**"] & Proposition~\ref{prop:main2} \arrow[dd,rightsquigarrow,"**"] & \bigg] in \ \Sigma_4 \\
                    Theorem~\ref{thm:4} \arrow[d,rightsquigarrow] \\
                    Theorem~\ref{thm:orig3} & Theorem~\ref{thm:orig2}
                \end{tikzcd}
                \caption{Dependencies between original intermediate results.}
                \label{fig:orig_depends}
            \end{figure}
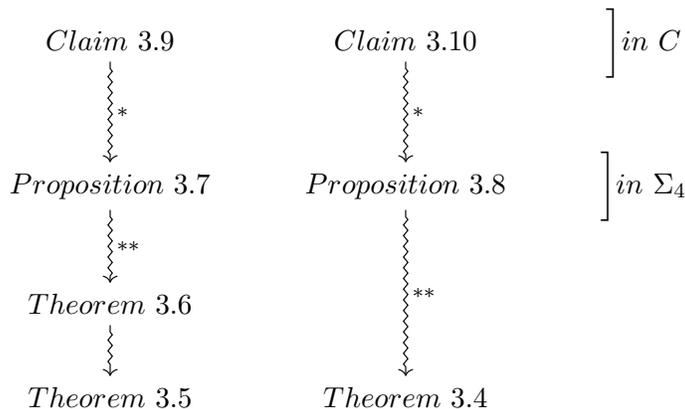
            
            The manifold $C$ mentioned above is the union of two annuli $[-1,1] \times \R/L\Z$ for some $L>4$, which intersect in two squares (see Figure~\ref{fig:qs} below). Special dynamics, called eggbeater dynamics, are defined on $C$ (see definition later in this subsection). In order to get results for a symplectic surface $M$, we need an embedding $i: C \hookrightarrow M$, which will induce eggbeater dynamics on $M$.
            
            Since $M$ is symplectic, it is orientable, and this leaves little choice for its homeomorphism type - the genus of $M$ determines it. Topologically, one can always think of $i$ as first embedding $C$ into the sphere, and then adding some handles. The results here use Floer homology, where basic objects of interest are homotopy classes of free loops in the manifold. Thus we want the embedding $i$ to have the property that homotopy classes of different orbits of the dynamics on $C$ will be pushed by $i$ to different homotopy classes in $M$, so that we will be able to distinguish between different orbits with Floer homology tools.
            
            Since the only choice in the embedding is how many handles to add and in which components of $M \setminus i(C)$ to attach them, we want every such component to have at least one end of a handle - otherwise, it will be contractible. There are four such components (consider Figure~\ref{fig:qs}), so for this method to work we need $M$ to be of genus at least 2. The original construction, found in~\cite{PS14} and presented later in this subsection, uses a surface of genus at least 4, and adds (at least) one handle in every such component. This produces results for surfaces of genus $\geq 4$, and has the added benefit that it makes $i$ incomressible - so that different free homotopy classes of loops in $C$ do not merge under $i$. However, this is not efficient if we want to minimize the genus. A slightly more efficient construction is found in Section~\ref{sec:3.2}, and produces results for surfaces of genus 2,3. \\

            Both of the original proofs of Theorems~\ref{thm:orig2},~\ref{thm:orig3} use a construction of a sequence of homomorphisms $\Phi_k: F_2 \to Ham(\Sigma_4)$ (indexed by $k \in \N$). Every such Hamiltonian diffeomorphism $\Phi_k(w)$ is generated by a specific Hamiltonian denoted $H_{k,w}$, and the Hamiltonian isotopy generated by $H_{k,w}$ is denoted $\phi_{k,w}(t)$. These are all specified exactly later in this subsection. Note that these homomorphisms depend on the surface $\Sigma_4$ - the proofs for the generalized theorems (in Subsection~\ref{sec:3.2}) will use similiar but different homomorphisms.
            
            This sequence $\Phi_k$ induces a homomorphism
            \begin{myequation}
                F_2 \to Cone_\mathcal{U}(Ham(\Sigma), d_H) , \\
                w \mapsto [(\Phi_k(w))_{k=1}^\infty] .
            \end{myequation}
            
            To show this is a monomorphism, and so prove Theorem~\ref{thm:orig3}, for any $1 \neq w \in F_2$ we need to show that $\lim_\mathcal{U} \frac{d_H(\Phi_k(w), id_\Sigma)}{k} = \lim_\mathcal{U} \frac{|| \Phi_k(w) ||_H}{k} > 0$. This is done in Theorem~\ref{thm:4}:
            
            \begin{theorem}[Theorem 2.1 from~\cite{10}]
            \label{thm:4}
                Let $1 \neq w \in F_2 = \langle H,V \rangle$. Then there exist constants $C = C(w) > 0, k_0 = k_0(w) \in \N$ such that for any $k > k_0$:
                \[
                    || \Phi_k(w) ||_H \geq C \cdot k.
                \]
            \end{theorem}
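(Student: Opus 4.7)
The plan is to follow the Floer-theoretic strategy of Alvarez-Gavela et al., reducing Theorem~\ref{thm:4} to Proposition~\ref{prop:main}. For each $k \in \N$ and each non-trivial $w \in F_2 = \langle V,H \rangle$, I would fix a free homotopy class $\alpha_{k,w} \in \pi_0(\mathscr{L}\Sigma_4)$ naturally associated to the cyclic word pattern of $w^k$ (obtained as the image under $\tau_i$ of an analogous class on $C$). Since $\Sigma_4$ is symplectically aspherical, the Floer chain complex $CF_*(H_{k,w}; \alpha_{k,w})$ is well defined and comes equipped with the standard action filtration.

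Proposition~\ref{prop:main} supplies exactly the combinatorial input needed about this complex: its generators are the $1$-periodic orbits of $\phi_{k,w}(t)$ whose free homotopy class is $\alpha_{k,w}$, these orbits are non-degenerate, their count is bounded by a constant $N(w)$ independent of $k$, and their action values span an interval whose length grows at least linearly in $k$. Passing to the persistence module $\{HF_*^{(-\infty,t]}(H_{k,w}; \alpha_{k,w})\}_{t \in \R}$ and its barcode, the total homology $HF_*(H_{k,w}; \alpha_{k,w})$ is canonically isomorphic to a homological invariant of the component $\mathscr{L}_{\alpha_{k,w}}\Sigma_4$ of the free loop space, and hence of dimension bounded by a constant depending only on $w$. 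A pigeonhole argument then forces at least one bar of length $\geq c(w) \cdot k$: one cannot distribute boundedly many generators across a linearly growing action window without producing a long bar, once the total homology is of bounded dimension.

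Finally, the boundary-depth of this barcode (equivalently, a difference of Hofer-continuous spectral invariants of $H_{k,w}$) is bounded above by $\|\Phi_k(w)\|_H$, which yields the desired estimate $\|\Phi_k(w)\|_H \geq C(w)\cdot k$ for all $k \geq k_0(w)$. The main obstacle is the Floer-homological bookkeeping in the second step: one must verify that the topological bound on $\dim HF_*(H_{k,w}; \alpha_{k,w})$ is independent of $k$ while the action window grows linearly, and that the persistence-module machinery is compatible with restriction to a single non-contractible free homotopy class on a surface. Both of these are precisely the "hard" $**$-labeled Floer-theoretic ingredients imported from \cite{PS14}, \cite{10}, and the input they need from the dynamics on $\Sigma_4$ is exactly what Proposition~\ref{prop:main} provides.
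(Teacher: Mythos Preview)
Your plan coincides with the paper's: the deduction of Theorem~\ref{thm:4} from Proposition~\ref{prop:main} is exactly the $**$-step the paper imports from Section~5.4 of~\cite{10}, and you correctly name Proposition~\ref{prop:main} as the dynamical input.

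There is, however, one genuine gap. Proposition~\ref{prop:main} is stated only for \emph{balanced} words $w = V^{N_1}H^{M_1}\cdots V^{N_r}H^{M_r}$ with all $N_j,M_j\neq 0$, whereas Theorem~\ref{thm:4} concerns every $1\neq w\in F_2$. A nontrivial $w$ is either conjugate to a balanced word or conjugate to a nonzero power of $V$ or of $H$, and your outline covers only the first case. The paper singles this out explicitly: ``the case where $w$ is not conjugate to a power of $V$ or $H$ requires Proposition~\ref{prop:main}; the case where $w$ is conjugate to a power of $V$ or $H$ does not use it.'' For $w=V^n$ (say), $\Phi_k(V^n)$ is the $n$-th iterate of an autonomous annulus twist, and the linear Hofer lower bound comes from a separate, easier argument (e.g.\ spectral invariants in the class of the core circle of $i(C_V)$). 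You need to add this case.

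Two smaller corrections to your Floer sketch, though your deferral to~\cite{10} ultimately absorbs them. First, for a non-contractible class $\alpha$ on an aspherical surface one has $HF_*(H;\alpha)=0$ (compare with a $C^2$-small Hamiltonian), not the homology of a loop-space component; this is harmless, indeed helpful, since then every bar is finite. Second, the pigeonhole as you phrase it is false: boundedly many generators spread across a wide action window can still pair into arbitrarily short bars. The mechanism in~\cite{10} uses the Conley--Zehnder data in Proposition~\ref{prop:main}: the maximal-action generator is the \emph{unique} one in the top degree $1+r$, hence it must close a bar whose other end lies among the degree-$r$ generators, all of which sit at action lower by an amount of order $k$.
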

            
            \begin{corollary}
                Theorem~\ref{thm:4} implies Theorem~\ref{thm:orig3}.
            \end{corollary}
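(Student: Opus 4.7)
My plan is to define the homomorphism $\Psi: F_2 \to Cone_\mathcal{U}(Ham(\Sigma_4), d_H)$ by the formula already indicated in the text, namely $\Psi(w) = [(\Phi_k(w))_{k \in \N}]$, and then verify in sequence that (i) the sequence $(\Phi_k(w))_k$ actually represents an element of the cone, (ii) $\Psi$ is a group homomorphism, and (iii) $\Psi$ has trivial kernel. Steps (i) and (ii) will be bookkeeping consequences of the construction of $\Phi_k$; the weight of the argument sits entirely in (iii), which is exactly where Theorem~\ref{thm:4} will be invoked.

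For well-definedness I would argue as follows. The basepoint of the cone is $id_{\Sigma_4}$, so I must bound $d_H(\Phi_k(w), id_{\Sigma_4})/k = \|\Phi_k(w)\|_H/k$ in $k$. Fixing a reduced expression $w = s_1 s_2 \cdots s_n$ with $s_j \in \{H, V, H^{-1}, V^{-1}\}$, bi-invariance and the triangle inequality give $\|\Phi_k(w)\|_H \leq \sum_{j=1}^n \|\Phi_k(s_j)\|_H$. From the explicit eggbeater construction each generator satisfies $\|\Phi_k(s_j)\|_H \leq D \cdot k$ for a constant $D$ independent of $k$, so $\|\Phi_k(w)\|_H \leq nD \cdot k$. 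This also shows that $\Psi$ does not depend on the choice of representative sequence up to the equivalence relation defining the cone. Showing $\Psi$ is a homomorphism is immediate: for each fixed $k$ the assignment $w \mapsto \Phi_k(w)$ is a homomorphism $F_2 \to Ham(\Sigma_4)$, and the cone operation is the coordinatewise one modulo $\sim$, so $\Psi(w_1 w_2) = [(\Phi_k(w_1) \Phi_k(w_2))_k] = \Psi(w_1)\Psi(w_2)$.

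For injectivity, suppose $1 \neq w \in F_2$. Theorem~\ref{thm:4} supplies constants $C = C(w) > 0$ and $k_0 = k_0(w) \in \N$ with $\|\Phi_k(w)\|_H \geq C k$ for every $k > k_0$. Hence $d_H(\Phi_k(w), id_{\Sigma_4})/k \geq C$ for all $k > k_0$. Since the set $\{k : k > k_0\}$ is cofinite in $\N$ and every non-principal ultrafilter on $2^\N$ contains the cofinite filter, this lower bound holds $\mathcal{U}$-almost everywhere, so
\[
    d_\mathcal{U}(\Psi(w), \Psi(1)) \;=\; \lim_\mathcal{U} \frac{d_H(\Phi_k(w), id_{\Sigma_4})}{k} \;\geq\; C \;>\; 0.
\]
Thus $\Psi(w) \neq \Psi(1)$, so $\ker \Psi = \{1\}$ and $\Psi$ is a monomorphism, as required by Theorem~\ref{thm:orig3}.

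The only non-formal step here is (iii), and it is not really an obstacle given Theorem~\ref{thm:4}; the genuine difficulty lies upstream, in the proof of Theorem~\ref{thm:4} itself. The only subtlety I would flag in the write-up is the verification that the basepoint-boundedness condition in the definition of the cone holds — this uses bi-invariance to bound the norm of a product by a sum of norms of generators, so the constant of linear growth genuinely depends on the word $w$ but not on $k$.
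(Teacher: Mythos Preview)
Your proposal is correct and follows exactly the route the paper indicates: define $w \mapsto [(\Phi_k(w))_k]$, and deduce injectivity from the linear lower bound of Theorem~\ref{thm:4} via the ultralimit. The paper's own argument is just the short paragraph preceding Theorem~\ref{thm:4} and does not spell out the well-definedness check (that $\|\Phi_k(w)\|_H/k$ is bounded) or the homomorphism property; your write-up fills these in carefully but adds no new ideas, so the approaches coincide.
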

            
            In addition to the above sequence $\Phi_k$, a collection of free homotopy classes $\alpha_{k,w} \in \pi_0(\mathscr{L}C)$, indexed by $k \in \N, w \in F_2$, and another collection of free homotopy classes $\alpha_k^\prime \in \pi_0(\mathscr{L}C)$, indexed by $k \in K$, are specified, where $K \subset \N$ is some unbounded subset to be specified later in this subsection.
            
            To prove Theorems~\ref{thm:orig2},~\ref{thm:4}, the following propositions are used:
            
            \begin{prop}[Proposition 5.11 from~\cite{10}]
            \label{prop:main}
                Let $w = \Pi_j V^{N_j} H^{M_j} \in F_2$ be a balanced word. For large enough $k \in \N$, there are $2^{2r}$ non-degenerate fixed points of $\Phi_k(w)$ whose orbits have free homotopy class $\tau_i(\alpha_{k,w})$ (i.e. $[t \mapsto \phi_{k,w}(t)(z_0)]_{\pi_0(\mathscr{L}\Sigma_4)} = \tau_i(\alpha_{k,w})$ for $z_0$ a non-degenerate fixed point of $\Phi_k(w)$), and these fixed points are indexed by $\vec{\epsilon} = (\epsilon_0,...,\epsilon_{2r-1}) \in \{\pm 1\}^{2r}$, with the fixed point associated to sign vector $\vec{\epsilon}$ denoted $z(\vec{\epsilon})$. The action and Conley-Zehnder index of the point $z(\vec\epsilon)$ are:
                \begin{equation}
                \label{eq:2}
                    \mathcal{A}(z(\vec{\epsilon})) = Lk \sum_{j=1}^r \left( \epsilon_{2j-2} N_j \left(1-\frac{1}{2|N_j|}\right)^2 + \epsilon_{2j-1} M_j \left(1-\frac{1}{2|M_j|}\right)^2 \right) + O(1) ,
                \end{equation}
                \begin{equation}
                \label{eq:3}
                    \mu_{CZ}(z(\vec\epsilon)) = 1 + \frac12 \sum_{j=1}^r \left( \epsilon_{2j-2} sign(N_j) + \epsilon_{2j-1} sign(M_j) \right) ,
                \end{equation}
                where the action of a fixed point is understood to be that of its orbit under $\phi_{k,w}$, the action and Conley-Zehnder index are with respect to the Hamiltonian $H_{k,w}$, and where the $O$ notation in Equation~\ref{eq:2} is as $k \to \infty$.
            \end{prop}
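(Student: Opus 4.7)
The plan is to reduce the statement to the analogous count on the manifold $C$ and then transfer it to $\Sigma_4$ via the embedding $i$. Because $i$ is incompressible, the induced map $\tau_i: \pi_0(\mathscr{L}C) \to \pi_0(\mathscr{L}\Sigma_4)$ is injective, so fixed points of $\Phi_k(w) = i_* \phi_{k,w}$ whose orbits represent $\tau_i(\alpha_{k,w})$ are in canonical bijection with fixed points of the eggbeater $\phi_{k,w}$ on $C$ whose orbits represent $\alpha_{k,w}$; moreover both the action (up to an $i$-dependent constant absorbed into the $O(1)$) and the Conley--Zehnder index are preserved under this bijection. This is precisely the ``$*$''-deduction indicated in Figure~\ref{fig:orig_depends}, and it reduces the proposition to the corresponding statement on $C$.

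On $C$ I would exploit the explicit piecewise-linear structure of $\phi_{k,w}$, which is a concatenation of $2r$ shear segments alternately supported on $C_V$ and $C_H$: during the $j$-th $V$-segment the shear makes approximately $k|N_j|$ full turns of the annulus, and analogously for the $H$-segments. Any orbit in the class $\alpha_{k,w}$ must therefore cross the intersection region $C_V \cap C_H$ a prescribed number of times in a prescribed combinatorial pattern. Since the profile of $f$ consists of two affine pieces, each crossing lies on exactly one of the two branches, and this choice is recorded by a sign $\epsilon_\ell \in \{\pm 1\}$; sign vectors $\vec\epsilon \in \{\pm 1\}^{2r}$ then parameterize the possible patterns. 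With $\vec\epsilon$ fixed, the fixed-point equation is an affine system in $2r$ unknowns whose linearization is a product of $2r$ elementary shear matrices; for $k$ large enough, $1$ is not an eigenvalue, so the system admits a unique non-degenerate solution, yielding exactly $2^{2r}$ non-degenerate fixed points. One must also verify that the resulting orbit genuinely lies in the homotopy class $\alpha_{k,w}$ and not in a neighbouring class with the same combinatorial pattern, which reduces to checking that the solution stays in the correct fundamental domain and is routine for $k$ large.

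The last step is to compute the two invariants. The action splits as a sum of $2r$ contributions, one per shear segment: integrating the generating Hamiltonian along the $j$-th $V$-segment and using the tent shape of $f$, the contribution is $\epsilon_{2j-2} Lk N_j (1 - \frac{1}{2|N_j|})^2 + O(1)$, where the factor $(1 - \frac{1}{2|N_j|})^2$ reflects the plateau geometry of the profile and the $O(1)$ absorbs bounded boundary corrections uniform in $k$; summing over $j$ and over the analogous $H$-contributions yields Equation~(\ref{eq:2}). For the Conley--Zehnder index, I would trivialize $TC$ along the orbit by the natural horizontal/vertical framing and compute the Maslov-type index of the resulting product of shear matrices; each elementary shear contributes $\frac{1}{2} \epsilon \cdot \mathrm{sign}(N_j)$ (or $\mathrm{sign}(M_j)$ for an $H$-segment), and the additive $1$ comes from the standard normalization of the reference trivialization. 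The main obstacle is the bookkeeping in these two computations: cleanly isolating the leading coefficient of the action together with the $O(1)$ remainder, and tracking signs consistently across all $\vec\epsilon$ in the product of shear linearizations to recover Equation~(\ref{eq:3}).
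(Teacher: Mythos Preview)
Your proposal is correct and follows the same two-stage scheme the paper outlines: first use incompressibility of $i$ to reduce Proposition~\ref{prop:main} on $\Sigma_4$ to Claim~\ref{cl:2} on $C$, then analyze the piecewise-linear eggbeater on $C$ by solving, for each sign vector $\vec\epsilon$, an affine fixed-point system and computing action and Conley--Zehnder index. The paper does not carry out the second stage in detail either---it defers to Subsections~5.1 and~5.3 of~\cite{10}---so your sketch is in fact more explicit than what appears here; the one step you leave implicit and which the paper singles out is the passage from the piecewise-linear model $\psi_{k,w}'$ back to the smoothed isotopy $\psi_{k,w}$, i.e.\ checking that for $k$ large the two have the same non-degenerate fixed points (handled in~\cite{10}, Subsection~5.3). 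Also a small notational slip: the isotopy on $C$ is $\psi_{k,w}$, not $\phi_{k,w}$.
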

            
            \begin{prop}[Proposition 5.1 from~\cite{PS14}]
            \label{prop:main2}
                Let $w = (VH)^r \in F_2$ for some $r \in \N$. For large enough $k \in K$, there are $2^{2r}$ non-degenerate fixed points of $\Phi_k(w)$ whose orbits have free homotopy class $\tau_i(\alpha_k^\prime)$, and fixed points in different orbits have action gaps that grow linearly with $k$: for such fixed points in different orbits $y, z$:
                \[
                    | \mathcal{A}(y) - \mathcal{A}(z) | \geq c \cdot k + O(1) ,
                \]
                as $k \to \infty$, for some global constant $c > 0$.
            \end{prop}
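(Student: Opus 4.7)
My plan is to reduce Proposition~\ref{prop:main2} to its $C$-side analogue Claim~\ref{cl:2.2} by transferring fixed points, their non-degeneracy, and their action gaps along the pushforward through the incompressible embedding $i: C \hookrightarrow \Sigma_4$.

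First I would set up the bijective correspondence between fixed points. By construction $\Phi_k(w) = i_* \phi_{k,w}^C(1)$, where $\phi_{k,w}^C$ is the eggbeater flow on $C$; hence the generating Hamiltonian $H_{k,w}$ is constant on $\Sigma_4 \setminus i(C)$ and $\Phi_k(w)$ is the identity there. So any fixed point of $\Phi_k(w)$ whose orbit is non-contractible must lie in $i(C)$ and come from a fixed point on $C$. Because $i$ is incompressible (which holds for $\Sigma_4$ of genus $\geq 4$), the induced map $\tau_i: \pi_0(\mathscr{L}C) \to \pi_0(\mathscr{L}\Sigma_4)$ is injective, so $\tau_i^{-1}(\tau_i(\alpha_k^\prime)) = \{\alpha_k^\prime\}$. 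Consequently fixed points of $\Phi_k(w)$ whose orbit sits in class $\tau_i(\alpha_k^\prime)$ are in bijection, via $i$, with fixed points of the $C$-side eggbeater map whose orbits lie in the single class $\alpha_k^\prime \in \pi_0(\mathscr{L}C)$.

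Next I would invoke Claim~\ref{cl:2.2} to produce $2^{2r}$ such non-degenerate fixed points on $C$ for $k \in K$ large enough, together with pairwise action gaps of order $k$. Non-degeneracy transfers under $i$ because the differential of $\Phi_k(w)$ at $i(z)$ is conjugate to that of $\phi_{k,w}^C(1)$ at $z$. The action of an orbit transfers as well, up to an additive constant determined by the value at which $H_{k,w}$ is extended outside $i(C)$; this constant is the same for every orbit contained in $i(C)$, so differences of actions are preserved, and the linear-in-$k$ growth of action gaps on $C$ passes verbatim to $\Sigma_4$.

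The main obstacle is not any single step in the transfer, but the requirement that no extraneous fixed points inflate the count on $\Sigma_4$: a priori, fixed points of the eggbeater map lying in classes on $C$ other than $\alpha_k^\prime$ could push forward into $\tau_i(\alpha_k^\prime)$ and spoil the exact count $2^{2r}$. This is ruled out precisely by injectivity of $\tau_i$, i.e.\ incompressibility of $i$, which is available in genus $\geq 4$ but fails in genera $2,3$ --- this is exactly the obstruction the remainder of the paper is designed to circumvent, via Lemma~\ref{lemma:2} or Claim~\ref{cl:incompressible}. The bulk of the analytic work --- locating the $2^{2r}$ fixed points on $C$ and establishing the linear action gaps --- is packed into Claim~\ref{cl:2.2}, where the piecewise-linear structure of the shears $f_V, f_H$ makes the required action estimate tractable.
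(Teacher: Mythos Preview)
Your proposal is correct and follows essentially the same approach as the paper: reduce Proposition~\ref{prop:main2} to Claim~\ref{cl:2.2} via the incompressibility of $i$ in genus $\geq 4$, using that non-degenerate fixed points must lie in $\Ima i$ and that injectivity of $\tau_i$ forces the preimage of $\tau_i(\alpha_k^\prime)$ to be exactly $\{\alpha_k^\prime\}$. Your remarks on transferring non-degeneracy and action gaps are accurate, and your identification of the possible ``extraneous fixed points'' issue is precisely the point the paper singles out as the obstruction in lower genus.
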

            
            Note that both propositions depend on the definition of the embedding $i$, defined later in this subsection.
            
            Given Proposition~\ref{prop:main2}, Theorem~\ref{thm:orig2} is proven in Section 5.1 of~\cite{PS14}, and given Proposition~\ref{prop:main}, Theorem~\ref{thm:4} is proven in Section 5.4 of~\cite{10} (the case where $w$ is not conjugate to a power of $V$ or $H$ requires Proposition~\ref{prop:main}; the case where $w$ is conjugate to a power of $V$ or $H$ does not use it). The remainder of this subsection is devoted to outlining the construction of $\Phi_k, \phi_{k,w}, \alpha_{k,w}, \alpha_k^\prime$ and the proof of Propositions~\ref{prop:main},~\ref{prop:main2}, as proven in~\cite{PS14},~\cite{10}. \\
        
        \subsubsection{The geometric construction}\label{sec:geometric}
            Let $\Sigma_4$ be a surface of genus $\geq 4$. Consider the cylinder $C_*=[-1,1]\times\mathbb{R}/L\mathbb{Z}$, for $L > 4$, with coordinates $x,y$ and the standard symplectic form $dx\wedge dy$. Denote by $C_V,C_H$ two copies of $C_*$. Consider the squares
            \[
                S_0^\prime = [-1,1]\times[-1,1]/L\Z, S_1^\prime = [-1,1]\times[L/2-1,L/2+1]/L\Z
            \]
            in $C_*$. They give four squares $S_{V,0},S_{V,1}\subset C_V$ and $S_{H,0},S_{H,1}\subset C_H$. Define the symplectomorphism $VH_{0,1}:S_{V,0}\bigsqcup S_{V,1} \to S_{H,0} \bigsqcup S_{H,1}$ given by $VH\bigsqcup VH^\prime$, where
            \begin{myequation}
                VH:S_{V,0}\to S_{H,0} : (x,[y]) \mapsto (-y,[x]) ,\\
                VH^\prime : S_{V,1}\to S_{H,1} : (x,[y]) \mapsto (y-L/2, [-x+L/2]) .
            \end{myequation}
            Define
            \[
                C = C_V \bigcup_{VH_{0,1}} C_H .
            \]
            This is a symplectic manifold with symplectic form $\omega_0 = dx \wedge dy$ on every copy of the cylinder $C_*$. Denote by $C_V: C_* \hookrightarrow C, \ C_H: C_* \hookrightarrow C$ the two injections induced by the above union.
            
            Denote by $S_0,S_1 \subset C$ the identification of the squares $S_{V,0},S_{H,0}$ and $S_{V,1},S_{H,1}$. In fact, $S_0 \cup S_1 = C_V \cap C_H$. Fix two points $s_0 \in S_0, s_1 \in S_1$. Define 4 paths: two paths $q_1,q_3$ from $s_0$ to $s_1$, and two paths $q_2,q_4$ from $s_1$ to $s_0$ (see Figure~\ref{fig:qs}); $q_1, q_2$ are paths on $C_V$, and $q_3,q_4$ are paths on $C_H$.
    
            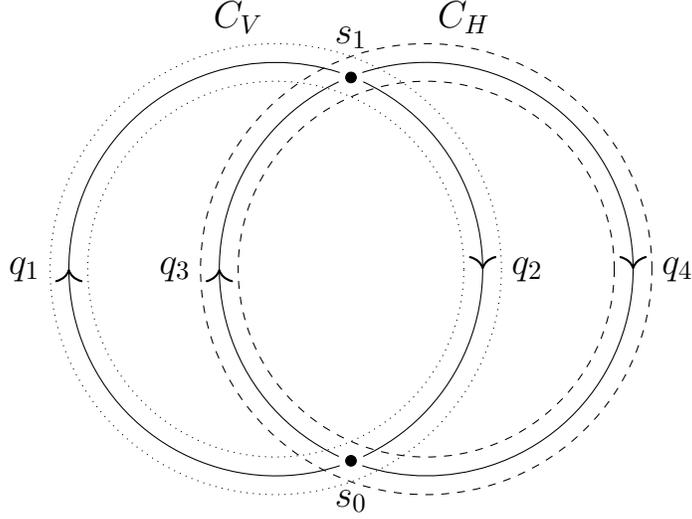
\begin{figure}
                \centering
                \begin{tikzpicture}
                    \begin{scope}[decoration={markings,mark=at position 0.5 with {\arrow[scale=2]{>}}}]
                        \draw[dotted] (-1,0) circle (3);
                        \draw[dotted] (-1,0) circle (2.5);
                        \draw (-1.5,3) node[anchor=south] {\Large $C_V$};
                        
                        \draw[dashed] (1,0) circle (3);
                        \draw[dashed] (1,0) circle (2.5);
                        \draw (1.5,3) node[anchor=south] {\Large $C_H$};
                        
                        \filldraw[black] (0,-2.55) circle (2pt);
                        \draw (0,-2.8) node[anchor=north] {\Large $s_0$};
                        \filldraw[black] (0,2.55) circle (2pt);
                        \draw (0,2.8) node[anchor=south] {\Large $s_1$};
                        
                        \draw[postaction={decorate}] (-1,0) ++(288:2.75) arc (288:72:2.75);
                        \draw[postaction={decorate}] (-1,0) ++(65:2.75) arc (65:-65:2.75);
                        \draw[postaction={decorate}] (1,0) ++(468:2.75) arc (468:252:2.75);
                        \draw[postaction={decorate}] (1,0) ++(245:2.75) arc (245:115:2.75);
                        
                        \draw (-4,0) node[anchor=east] {\Large $q_1$};
                        \draw (2,0) node[anchor=west] {\Large $q_2$};
                        \draw (-2,0) node[anchor=east] {\Large $q_3$};
                        \draw (4,0) node[anchor=west] {\Large $q_4$};
                    \end{scope}
                \end{tikzpicture}
                \caption{The paths $q_1, q_2, q_3, q_4$ in $C$.}
                \label{fig:qs}
            \end{figure}
    
            Note that $\pi_1(C,s_0) \simeq F_3$, the free group on 3 generators. The 3 generators $a,b,c$ of $\pi_1(C,s_0)$ are taken to be:
            \begin{myequation}
            a = [q_1 \# q_2]_{\pi_1(C,s_0)} , \\
            b = [q_3 \# q_4]_{\pi_1(C,s_0)} , \\
            c = [q_3 \# q_2]_{\pi_1(C,s_0)} ,
            \end{myequation}
            where the $\#$ sign is used for path concatenation: $q\#q^\prime$ is the concatenation of paths $q$ and then $q^\prime$, if $q(1) = q^\prime(0)$.
            
            Consider the function $u_0: [-1,1] \to \R$, $u_0(s) = 1 - |s|$. Take an even, non-negative, sufficiently $C^0$-close smoothing $u$ to $u_0$ such that $u$ is supported away from $\{\pm 1\}$, and both $u-u_0$ and $\int_{-1}^t (u(s)-u_0(s)) ds$ are supported in a sufficiently small neighborhood of $\{\pm 1, 0\}$. For $k \in \N$, define
            \begin{myequation}
                f = f_k: C_* \to C_* , \\
                f(x,[y]) = (x,[y + k L \cdot u(x)]) .
            \end{myequation}
            
            This mapping is a Hamiltonian diffeomorphism on $C_*$ with Hamiltonian
            \[
                h_k(x,[y]) = -\frac{k}{2} + k \int_{-1}^x u(s) ds .
            \]
            
            Denote $f_{k,V} = (c_V)_* f_k$, $f_{k,H} = (c_H)_* f_k$. Recall that these are two Hamiltonian diffeomorphisms on $C$, one supported on $C_V$ and the other on $C_H$, both supported away from $\partial C$. Define a homomorphism
            \begin{myequation}
                \Psi_k: F_2 = \langle V,H \rangle \to Ham_c(C,\omega_0) ,\\
                V \mapsto f_{k,V}, H \mapsto f_{k,H} ,
            \end{myequation}
            where $Ham_c(M,\omega)$ denotes the group of Hamiltonian diffeomorphisms whose generators have compact support of a symplectic manifold $(M,\omega)$.
            Note that the image of a word $w = V^{N_1} H^{M_1} ... V^{N_r} H^{M_r} \in F_2$ is $f_{k,H}^{M_r} \circ f_{k,V}^{N_r} \circ ... \circ f_{k,H}^{M_1} \circ f_{k,V}^{N_1}$. These images are called \textit{eggbeater maps}.
            
            Pushing forward the flow generated by $h_k$ to $C$, one gets flows on $C_V$ and $C_H$ whose time-one-maps are $f_{k,V}$ and $f_{k,H}$, and concatenating these in the order induced by $w$ (as in the construction of $\Psi_k(w)$) one gets a flow $\psi_{k,w}(t): C \to C$, whose time-one-map is $\Psi_k(w)$. Denote the Hamiltonian that generates this flow $G_{k,w}: S^1 \times C \to \R$. \\
            
            Consider a symplectic embedding $i: C \hookrightarrow \Sigma_4$ such that $i_*: \pi_1(C) \to \pi_1(\Sigma_4)$ and $\tau_i: \pi_0(\mathscr{L}C) \to \pi_0(\mathscr{L}\Sigma_4)$ are both injective, and such that each component of $\partial C$ separates $\Sigma_4$. For example, one can embed $C$ into $\R^2$, add at least one handle in every connected component of $\R^2 \setminus C$, and compactify by adding the point at infinity; see the construction in~\cite{PS14} and~\cite{10}. This is the point where Lemma~\ref{lemma:2} is used when generalizing the propositions to surfaces of genus 2,3 (see Subsection~\ref{sec:3.2}). Note that $i, f_{k,V}$ and $i, f_{k,H}$ satisfy Condition~\ref{eq:condition}. \\
            
            The homomorphisms $\Phi_k: F_2 \to Ham(\Sigma_4)$ will be defined by $\Phi_k(w) = i_* \Psi_k(w)$. The mapping $\Phi_k(w)$ is indeed a diffeomorphism, because of the assumptions on $u$, and is Hamiltonian, since it is generated by the Hamiltonian $H_{k,w} = i_* G_{k,w}$. \\
            
            Given $k \in \N, w = V^{N_1} H^{M_1} ... V^{N_r} H^{M_r} \in F_2$, set the free homotopy classes $\alpha_{k,w}$ to be
            \[
                \alpha_{k,w} = \eta_{C,s_0}(\beta_{k,w}) \in \pi_0(\mathscr{L}C) ,
            \]
            where
            \[
                \beta_{k,w} = \Pi_{j=1}^r a^{k \cdot sign(N_j)} b^{k \cdot sign(M_j)} \in \pi_1(C,s_0)
            \]
            and $sign: \Z \to \{0, \pm1\}$ is
            \begin{myequation}
                n \mapsto
                \left\{\begin{array}{ll}
                    \frac{n}{|n|}  & n \neq 0 \\
                    0              & n = 0
                \end{array}\right. .
            \end{myequation}
            
            Additionally, choose $\nu_j,\mu_j \in (0,1)$ for $j=1,...,r$ such that $\frac{\nu_j}{L},\frac{\mu_j}{L} \in \Q$ for all $j$, and the values $\sum_{j=0}^{r-1} (\epsilon_{2j+1}(1-\mu_{j+1})^2 - \epsilon_{2j+4}(1-\nu_{j+1})^2)$ are all distinct, for all sign vectors $\vec\epsilon = (\epsilon_0,...,\epsilon_{2r-1}) \in \{\pm 1\}^{2r}$. For these choices, the set $K \subset \N$ from Proposition~\ref{prop:main2} is taken to be $K = \{k \in \N | \forall j: \frac{\mu_j k}{L}, \frac{\nu_j k}{L} \in \Z \}$.
            
            For $k \in K$ set
            \[
                \alpha_k^\prime = \eta_{C,s_0}(\beta_k^\prime) \in \pi_0(\mathscr{L}C) ,
            \]
            where
            \[
                \beta_k^\prime = \Pi_{j=1}^r a^{\frac{\nu_j k}{L}} b^{\frac{\mu_j k}{L}} \in \pi_1(C,s_0) .
            \]
            
            From this point on, it is assumed that any index $k$ of $\alpha_k^\prime$ or $\beta_k^\prime$ is in the subset $K$, since otherwise $\alpha_k^\prime, \beta_k^\prime$ are not well-defined. This will also be explicitly stated. \\
            
            The outline of the proofs of Propositions~\ref{prop:main},~\ref{prop:main2} is as follows. Since any non-degenerate fixed point of $\Phi_k(w)$ must lie in $\Ima i$, and in addition $i: C \to \Sigma_4$, $i_*: \pi_1(C,s_0) \to \pi_1(\Sigma_4,i(s_0))$ and $\tau_i: \pi_0(\mathscr{L}C) \to \pi_0(\mathscr{L}\Sigma_4)$ are all injective, it is enough to prove versions of Propositions~\ref{prop:main},~\ref{prop:main2} on $C$, i.e. prove the following claims:
            \begin{claim}
            \label{cl:2}
                Let $w = \Pi_{j=1}^r V^{N_j} H^{M_j} \in F_2$ be balanced. For large enough $k \in \N$, there are $2^{2r}$ non-degenerate fixed points of $\Psi_k(w)$ whose orbits have free homotopy class $\alpha_{k,w}$, and these fixed points have actions (with respect to $G_{k,w}$) and Conley-Zehnder indices as in Equations~\ref{eq:2},\ref{eq:3}.
            \end{claim}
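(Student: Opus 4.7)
The strategy is to reduce the fixed-point problem for $\Psi_k(w)$ in the class $\alpha_{k,w}$ to an explicit system of equations in $2r$ variables, exploiting the product structure of the eggbeater map and the tent-like profile of $u$. The proof stays entirely inside $C$, so Lemma~\ref{lemma:2} plays no role here — this claim is a purely local fixed-point count, and the embedding into a surface will enter only afterwards.

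First I would decompose $\Psi_k(w) = f_{k,H}^{M_r} \circ f_{k,V}^{N_r} \circ \cdots \circ f_{k,H}^{M_1} \circ f_{k,V}^{N_1}$ and, for a putative fixed point $z$, consider the intermediate sequence $z_0 = z,\ z_1 = f_{k,V}^{N_1}(z),\ z_2 = f_{k,H}^{M_1}(z_1),\ \ldots,\ z_{2r} = z$. The requirement that the orbit under the isotopy $\psi_{k,w}(t)$ represents $\alpha_{k,w} = \eta_{C,s_0}(\beta_{k,w})$ with $\beta_{k,w} = \Pi_j a^{k\cdot sign(N_j)} b^{k\cdot sign(M_j)}$ (cyclically reduced, because $w$ is balanced) forces each $z_j$ to lie in one of the intersection squares $S_0, S_1 \subset C_V \cap C_H$, determined unambiguously by reading $\beta_{k,w}$ cyclically; and forces the $j$-th $V$-segment to wind $k\cdot sign(N_j)$ times around $C_V$, and analogously for the $H$-segments.

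Second, I would parametrize each $z_j$ by coordinates in the relevant cylinder patch. Since $f_{k,V}$ preserves the $x$-coordinate in $C_V$ (and similarly for $f_{k,H}$), and the gluings $VH, VH^\prime$ are $\pm 90^\circ$ rotations that exchange the roles of $x$ and $y$, the problem collapses to $2r$ coupled equations on $x$-values $x_1,\ldots,x_{2r}$. The winding condition over the $j$-th $V$-shear reduces to $N_j \cdot u(x_{2j-1}) = sign(N_j)$, i.e.\ $u(x_{2j-1}) = 1/|N_j|$, and likewise $u(x_{2j}) = 1/|M_j|$. Since $u$ is supported away from $\{\pm 1\}$ and uniformly close to $1 - |x|$, each such equation has precisely two solutions, one with $x < 0$ and one with $x > 0$, encoded by a sign $\epsilon \in \{\pm 1\}$. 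The $2r$ binary choices assemble into a vector $\vec\epsilon \in \{\pm 1\}^{2r}$ and give exactly $2^{2r}$ candidate fixed points $z(\vec\epsilon)$; the remaining $y$-coordinates are then fixed uniquely by the gluing data.

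Third, for each $z(\vec\epsilon)$ I would verify non-degeneracy and compute the invariants. The linearization $d\Psi_k(w)$ is an alternating product of shear matrices of the form $\bigl(\begin{smallmatrix} 1 & 0 \\ N_j k L u^\prime(x_{2j-1}) & 1 \end{smallmatrix}\bigr)$ interleaved with the rotational gluings; for $k$ large, $u^\prime(x_j) \neq 0$ makes the off-diagonal entries grow, and a direct trace computation shows $1$ is not an eigenvalue. The action is obtained by integrating $G_{k,w}$ against a primitive of $\omega_0$ along the orbit: each shear contributes $-N_j h_k(x_{2j-1})$ plus boundary contributions from the gluings, and substituting $h_k(x) = -k/2 + k\int_{-1}^x u(s)ds$ at $x \approx \epsilon(1 - 1/|N_j|)$ and expanding produces the formula~\ref{eq:2} up to $O(1)$. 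The Conley-Zehnder index~\ref{eq:3} follows from the standard concatenation formula for paths of symplectic shears, each contributing $\tfrac{1}{2}\epsilon \, sign(N_j)$ or $\tfrac{1}{2}\epsilon \, sign(M_j)$.

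The main obstacle is the combinatorial step of ruling out other fixed points in class $\alpha_{k,w}$ — i.e.\ showing that every orbit in this class visits $S_0 \cup S_1$ in exactly the prescribed order and with exactly the prescribed windings. This uses the balanced structure of $w$ (which prevents consecutive $V$- or $H$-shears from merging into longer factors that would admit spurious solutions) together with the support condition on $u$ and the assumption $L > 4$, which together pin down the two branches $x = \pm(1 - 1/|N_j|)$ as the only roots of the winding equations and forbid extra laps between successive intersection crossings.
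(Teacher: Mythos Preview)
Your proposal is correct and follows essentially the same approach as the paper, which defers the actual argument to \cite{10} and \cite{PS14}: decompose $\Psi_k(w)$ into its $2r$ shear factors, track the intermediate points through the squares $S_0, S_1$, reduce the homotopy-class constraint to $2r$ winding equations whose solutions are indexed by $\vec\epsilon \in \{\pm 1\}^{2r}$, and then read off the action and Conley--Zehnder index. The only organizational difference is that the paper first carries this out for the piecewise-linear profile $u_0$ (where the equations $u_0(x)=1/|N_j|$ are exact) and then invokes a separate step showing that the non-degenerate fixed points of the smooth $\Psi_k(w)$ coincide with those of the piecewise-linear model for $k$ large, whereas you work directly with the smooth $u$ and absorb the perturbation into the $O(1)$ terms.
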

            
            \begin{claim}
            \label{cl:2.2}
                Let $w = (VH)^r \in F_2$ for some $r \in \N$. For large enough $k \in K$, there are $2^{2r}$ fixed points of $\Psi_k(w)$ whose orbits have free homotopy class $\alpha_k^\prime$, and such fixed points in different orbits have action gaps (with respect to $G_{k,w}$) that grow linearly with $k$ as $k \to \infty$.
            \end{claim}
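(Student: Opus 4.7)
The plan is to follow the proof template of Claim~\ref{cl:2} (namely Proposition 5.11 of~\cite{10}), specializing $w$ to $(VH)^r$ but replacing the target free homotopy class by $\alpha_k^\prime$. Since the structural machinery is identical, I outline only the two places where the arguments diverge: the parametrization of candidate orbits and the action computation.

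First I would identify candidate fixed points. For $z_0$ a fixed point of $\Psi_k(w) = (f_{k,H}\circ f_{k,V})^r$ whose $\psi_{k,w}$-orbit represents $\alpha_k^\prime = \eta_{C,s_0}\bigl(\prod_j a^{\nu_j k/L} b^{\mu_j k/L}\bigr)$, the orbit must pass through $S_0 \cup S_1$ between consecutive shears, and during the $j$-th $V$-step (resp.\ $H$-step) the $y$-displacement must equal $\nu_j k$ (resp.\ $\mu_j k$), up to bounded error. Since the shear is $(x,y) \mapsto (x, y + kLu(x))$, this forces $u(\xi_j^V) = \nu_j/L$ and $u(\xi_j^H) = \mu_j/L$, where $\xi_j^{V/H}$ are the common $x$-coordinates during each step. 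On the linear region of $u$, where $u = u_0 = 1-|\cdot|$, these read $|\xi_j^V| = 1-\nu_j/L$ and $|\xi_j^H| = 1-\mu_j/L$, giving two sign choices at each of the $2r$ steps and thus $2^{2r}$ candidate labels $\vec\epsilon \in \{\pm 1\}^{2r}$. Fixing the $x$-coordinates via $\vec\epsilon$, the remaining unknowns (the $y$-coordinates, plus which of $S_0, S_1$ the orbit enters at each step) are determined by an affine system on the linear branches of $u$; the integrality $\nu_j k/L, \mu_j k/L \in \Z$ for $k \in K$ ensures compatibility, while the nonzero slope of $u$ on each linear branch gives both unique solvability and non-degeneracy.

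Next I would compute the action. Summing the contributions from the $2r$ shear steps, evaluated using $h_k(x,y) = -k/2 + k\int_{-1}^x u(s)\,ds$ and the explicit coordinates, yields an expansion
\[
\mathcal{A}(z(\vec\epsilon)) = k \cdot A(\vec\epsilon; \nu, \mu, L) + O(1),
\]
where $A(\vec\epsilon;\nu,\mu,L)$ is an explicit linear combination with coefficients $\pm(1 - \nu_j/L)^2$ and $\pm(1-\mu_j/L)^2$ selected by $\vec\epsilon$ --- precisely the combination (up to rescaling by $L$) appearing in the genericity hypothesis on $\{\nu_j, \mu_j\}$. By that hypothesis, the values $A(\vec\epsilon;\nu,\mu,L)$ are pairwise distinct across $\vec\epsilon$, so for orbits $z(\vec\epsilon) \neq z(\vec\epsilon^\prime)$ the action gap grows at least linearly in $k$, giving the claim.

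The main obstacle --- more technical than conceptual --- is confirming that the values $|\xi_j^{V/H}| = 1 - \nu_j/L,\ 1 - \mu_j/L$ all lie strictly inside the region where $u$ coincides with the piecewise linear tent $u_0$, rather than landing in the smoothed neighborhoods of $\{\pm 1, 0\}$; without this, the affine analysis above would break down. This is secured by fixing the parameters $\nu_j, \mu_j \in (0,1)$ and $L > 4$ first, and only then taking the smoothing of $u_0$ sufficiently small so that its support of nonlinearity avoids all the $\xi_j^{V/H}$.
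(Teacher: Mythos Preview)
Your proposal is correct and follows essentially the same approach that the paper indicates: the paper does not prove Claim~\ref{cl:2.2} in-line but defers to Section~5 of~\cite{PS14} and Subsections~5.1, 5.3 of~\cite{10}, describing exactly the two-step scheme you carry out---first the affine analysis of fixed points and actions for the piecewise-linear tent profile $u_0$, then the passage to the smoothed $u$ (your final paragraph). Your sketch is in fact more detailed than what the paper itself provides; the only discrepancy is cosmetic normalization (your $(1-\nu_j/L)^2$ versus the paper's $(1-\nu_j)^2$ in the genericity condition), which you already flag as a rescaling.
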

            
            This is proven in two steps. First, one proves these claims with respect to a piecewise-linear version $\psi_{k,w}^\prime(t)$ of the isotopy $\psi_{k,w}(t)$. This is done in Subsection 5.1 of~\cite{10} and in Section 5 of~\cite{PS14}, by analysis of the dynamics on $C$. Then, one shows that the non-degenerate fixed points of $\Psi_k(w)$ are exactly those of $\psi_{k,w}^\prime(1)$, for $k$ large enough. This is done in Subsection 5.3 of~\cite{10}.
    
    \subsection{Proof of the new theorems (genera 2,3)}
    \label{sec:3.2}
        The new proofs are very similar to the original ones, with the following changes. The interesting dynamics, which we want to keep, are the eggbeater dynamics, but we will need to change our construction of $i, C, \psi_{k,w}$ a bit to accomodate for the fact that the genus of the surface is now 2 or 3.
        
        More concretely, let $\Sigma$ be a surface of genus 2 or 3 with area form $\sigma$. The following data will be defined:
        \begin{itemize}
            \item A symplectic surface $(D,\omega)$ that contains $C$ from the original construction (i.e. $e_2: C \hookrightarrow D$ is a symplectic embedding that induces an embedding $\tau_{e_2}: \pi_0(\mathscr{L}C) \hookrightarrow \pi_0(\mathscr{L}D)$),
            \item an injection $i_2: D \hookrightarrow \Sigma$, and 
            \item homomorphisms $\Xi_k: F_2 \to Ham_c(D, \omega)$ indexed by $k \in \N$, with $\Xi_k(w)$ the time-one-map of the flow $\xi_{k,w}(t): D \to D$, such that for all $k \in \N, w \in F_2$: $\xi_{k,w}(t) \restriction_C = \psi_{k,w}(t)$.
        \end{itemize}
        The Hamiltonian generating $\xi_{k,w}(t)$ will be denoted $F_{k,w}: S^1 \times D \to \R$. These data will specify the dynamics on $D$. In order to work on $\Sigma$, the dynamics will be pushed forward by $i_2$. This will result in a Hamiltonian $(i_2)_* F_{k,w}: S^1 \times \Sigma \to \R$, and the flow and time-one-map it generates, denoted $(i_2)_* \xi_{k,w}(t): \Sigma \to \Sigma$ and $(i_2)_* \Xi_k(w): \Sigma \to \Sigma$.
        
        The confused reader may consult the following diagram, which indicates the different surfaces mentioned till now and the injections between them, together with the Hamiltonians on them, the flows these Hamiltonians generate, and their time-one-maps:
        
        \begin{center}\begin{tikzcd}
            (C, G_{k,w}, \psi_{k,w}, \Psi_k(w)) \arrow[hookrightarrow]{d}{e_2} \arrow[hookrightarrow]{r}{i} & (\Sigma_4, H_{k,w} = i_* G_{k,w}, \phi_{k,w} = i_* \psi_{k,w}, \Phi_k(w) = i_* \Psi_k(w)) \\
            (D, F_{k,w}, \xi_{k,w}, \Xi_k(w)) \arrow[hookrightarrow]{r}{i_2} & (\Sigma, (i_2)_* F_{k,w}, (i_2)_* \xi_{k,w}, (i_2)_* \Xi_k(w)) \\
        \end{tikzcd}\end{center}
        
        All these data will be explicitly defined in Subsection~\ref{sec:3.2.1}. \\
        
        With these constructions in hand, we will prove new versions of Propositions~\ref{prop:main},~\ref{prop:main2}, and these will imply the generalized Theorems~\ref{thm:2},~\ref{thm:3}. These dependencies are depicted in Figure~\ref{fig:new_depends}. The deductions marked $**$ can be taken as is from the proofs for surfaces of genus $\geq 4$ in~\cite{PS14},~\cite{10}. The deductions marked $*$ will now require justification, since $i_2$ will no longer be incompressible. Since Claims~\ref{cl:2},~\ref{cl:2.2} were already established (see proof in~\cite{PS14},~\cite{10} or sketch in Subsection~\ref{sec:3.1}), we have to show why they imply Propositions~\ref{prop:new},~\ref{prop:new2}. This is done in Subsection~\ref{sec:3.2.2}, and uses Lemma~\ref{lemma:2}.
        
        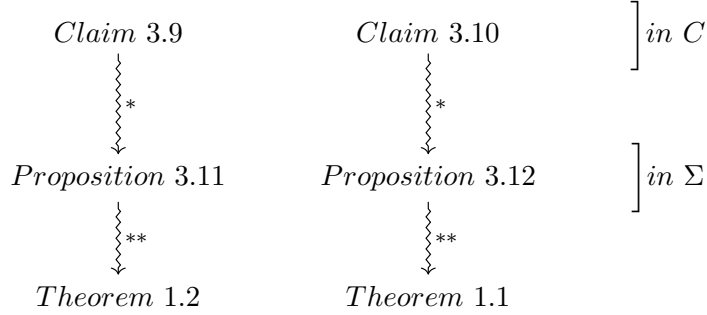
\begin{figure}
            \centering
            \begin{tikzcd}
                Claim~\ref{cl:2} \arrow[d,rightsquigarrow,"*"] & Claim~\ref{cl:2.2} \arrow[d,rightsquigarrow,"*"] & \bigg] in \ C \\
                Proposition~\ref{prop:new} \arrow[d,rightsquigarrow,"**"] & Proposition~\ref{prop:new2} \arrow[d,rightsquigarrow,"**"] & \bigg] in \ \Sigma \\
                Theorem~\ref{thm:3} & Theorem~\ref{thm:2}
            \end{tikzcd}
            \caption{Dependencies between generalized intermediate results.}
            \label{fig:new_depends}
        \end{figure}

        Propositions~\ref{prop:new},~\ref{prop:new2}, which by the above discussion imply the main theorems of this paper, are stated here.
        
        \begin{prop}
        \label{prop:new}
            Let $w = \Pi_{j=1}^r V^{N_j} H^{M_j} \in F_2 = \langle H,V \rangle$ be balanced. For large enough $k \in \N$, there are $2^{2r}$ non-degenerate fixed points of $(i_2)_* \Xi_k(w)$ in $\Sigma$ whose orbits have free homotopy class $\tau_{i_2}(\alpha_{k,w})$, and such fixed points have actions and Conley-Zehnder indices as in Equations~\ref{eq:2},\ref{eq:3}.
        \end{prop}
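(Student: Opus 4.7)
The plan is to reduce Proposition~\ref{prop:new} to Claim~\ref{cl:2} via pushforward along the composite embedding $i_2 \circ e_2 : C \hookrightarrow \Sigma$, and then to use Lemma~\ref{lemma:2} to rule out any spurious merging of free homotopy classes. First I would observe that by construction $F_{k,w}$ is supported in $e_2(C) \subset D$ (since $\xi_{k,w}\restriction_C = \psi_{k,w}$ and the original Hamiltonians $h_k$ are supported away from $\partial C_*$), so $(i_2)_*F_{k,w}$ is supported in $(i_2\circ e_2)(C)$. Consequently, every non-degenerate fixed point of $(i_2)_*\Xi_k(w)$ lies in $(i_2\circ e_2)(C)$, and the restriction of $(i_2)_*\Xi_k(w)$ to this image is symplectically conjugate to $\Psi_k(w)$ on $C$ via $i_2\circ e_2$. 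Non-degenerate fixed points therefore correspond bijectively, and because pushforward preserves the symplectic form, actions (computed with respect to the pushed-forward Hamiltonian) and Conley-Zehnder indices are preserved under this correspondence.

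Applying Claim~\ref{cl:2} to the balanced word $w$, one obtains $2^{2r}$ non-degenerate fixed points $z(\vec\epsilon)$ of $\Psi_k(w)$ on $C$ (indexed by $\vec\epsilon \in \{\pm 1\}^{2r}$) whose orbits represent the class $\alpha_{k,w}\in\pi_0(\mathscr{L}C)$ and whose actions and indices satisfy Equations~\ref{eq:2} and~\ref{eq:3}. Pushing these through $i_2 \circ e_2$ yields $2^{2r}$ non-degenerate fixed points of $(i_2)_*\Xi_k(w)$ on $\Sigma$ whose orbits lie in the free homotopy class $\tau_{i_2}(\tau_{e_2}(\alpha_{k,w}))$. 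Because $e_2$ is chosen so that $\tau_{e_2}$ is injective on $C$, this class is exactly $\tau_{i_2}(\alpha_{k,w})$ in the notation of the proposition, and the actions and indices inherited from Claim~\ref{cl:2} are the ones stated.

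The main obstacle is to show that no \emph{additional} non-degenerate fixed point of $(i_2)_*\Xi_k(w)$ has orbit in the class $\tau_{i_2}(\alpha_{k,w})$, i.e.\ that no fixed point of $\Psi_k(w)$ on $C$ lying in a class $\alpha \neq \alpha_{k,w}$ is mapped into $\tau_{i_2}(\alpha_{k,w})$ under $\tau_{i_2 \circ e_2}$. This is where Lemma~\ref{lemma:2} is used. By the structural analysis of eggbeater orbits carried out in~\cite{PS14,10} (formalized as Claim~\ref{claim:3} in the outline), every non-degenerate fixed point orbit of $\Psi_k(w)$ on $C$ is represented in $\pi_1(C,s_0)=F_3=\langle a,b,c\rangle$ by an element of the form $\delta=\prod_{j} a^{k_j} c^{-u_j} b^{l_j} c^{v_j}$ with $u_j,v_j\in\{0,1\}$ and $k_j,l_j\in\Z$, while $\alpha_{k,w}$ is represented by $\beta_{k,w}=\prod_j a^{k\cdot\mathrm{sign}(N_j)}b^{k\cdot\mathrm{sign}(M_j)}$, all of whose exponents are nonzero because $w$ is balanced. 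The embedding $i_2$ is constructed so that the induced map $(i_2\circ e_2)_* : \pi_1(C)\to\pi_1(\Sigma)$ coincides with the homomorphism $i_*$ appearing in Lemma~\ref{lemma:2}. Thus if some orbit class $[\delta]\neq\alpha_{k,w}$ satisfied $\tau_{i_2\circ e_2}([\delta])=\tau_{i_2\circ e_2}([\beta_{k,w}])$, Lemma~\ref{lemma:2} would force $\delta \sim \beta_{k,w}$ in $F_3$, contradicting $[\delta]\neq\alpha_{k,w}$. Hence exactly $2^{2r}$ non-degenerate fixed points of $(i_2)_*\Xi_k(w)$ have orbit in $\tau_{i_2}(\alpha_{k,w})$, with the asserted actions and Conley-Zehnder indices, completing the proof.
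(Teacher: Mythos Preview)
Your argument contains a genuine gap at the very first step. You claim that $F_{k,w}$ is supported in $e_2(C)\subset D$, but this is not how the Hamiltonians on $D$ are defined. In the construction (Subsection~\ref{sec:3.2.1}) one sets
\[
g_{k,V}=(e_2\circ c_V)_*h_k\ \bigsqcup\ -(c_1)_*h_k\ \bigsqcup\ (c_2)_*h_k,\qquad
g_{k,H}=(e_2\circ c_H)_*h_k\ \bigsqcup\ (c_1)_*h_k\ \bigsqcup\ (c_2)_*h_k,
\]
so $F_{k,w}$ has nontrivial support on the extra annuli $C_1$ and $C_2$. These annuli are there precisely so that $i_2$ and $F_{k,w}$ satisfy Condition~\eqref{eq:condition}; without them the pushforward $(i_2)_*F_{k,w}$ would not extend continuously across $\Sigma\setminus i_2(C)$. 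The information $\xi_{k,w}\restriction_C=\psi_{k,w}$ tells you only what happens on $C$, not that nothing happens on $C_1\cup C_2$.

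As a consequence, $(i_2)_*\Xi_k(w)$ has many non-degenerate fixed points lying in $i_2(C_1)\cup i_2(C_2)$, and their orbits represent classes $\eta_{\Sigma,i_2(s_0)}(g_1^n)$ or $\eta_{\Sigma,i_2(s_0)}(g_3^n)$ for various $n$. Your use of Lemma~\ref{lemma:2} handles only orbits coming from $C$ (those of the form $\Pi_j a^{k_j}c^{-u_j}b^{l_j}c^{v_j}$); it says nothing about whether $g_1^n$ or $g_3^n$ can be conjugate to $(i_2)_*\beta_{k,w}$ in $\pi_1(\Sigma)$. The paper closes this gap with a separate argument (Claim~\ref{claim:5}, relying on Claim~\ref{cl:length}): one shows that $\Pi_m(g_1g_3)^{k_{2m-1}}(g_2g_1^{-1}g_2^{-1}g_3)^{k_{2m}}$ has length $>1$ in the amalgamated-product decomposition of $\pi_1(\Sigma)$, while $g_1^n$ and $g_3^n$ have length $1$, so no conjugacy is possible. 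Once that is in place, your remaining steps (Claim~\ref{claim:3} plus Lemma~\ref{lemma:2}, and transport of actions and indices through $i_2$) match the paper's proof.
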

        
        \begin{prop}
        \label{prop:new2}
            Let $w = (VH)^r \in F_2$ for some $r \in \N$. For large enough $k \in K$, there are $2^{2r}$ fixed points of $(i_2)_* \Xi_k(w)$ in $\Sigma$ whose orbits have free homotopy class $\tau_{i_2}(\alpha_k^\prime)$, and such fixed points in different orbits have action gaps that grow linearly with $k$: for such fixed points in different orbits $y, z$:
            \[
                | \mathcal{A}(y) - \mathcal{A}(z) | \geq c \cdot k + O(1) ,
            \]
            as $k \to \infty$, for some global constant $c > 0$. The set $K \subset \N$ is as defined in Subsection~\ref{sec:3.1}.
        \end{prop}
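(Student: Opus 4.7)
The plan is to prove both propositions simultaneously by pulling the counting problem on $\Sigma$ back through the chain $\Sigma \supset i_2(D) \supset (i_2 \circ e_2)(C)$ down to the already-established Claims~\ref{cl:2} and~\ref{cl:2.2} on $C$. First I would observe that $(i_2)_*\Xi_k(w)$ equals the identity on $\Sigma \setminus i_2(D)$ by construction of the pushforward, so any fixed point with non-trivial free homotopy orbit must lie in $i_2(D)$; the restriction to $i_2(D)$ is moreover conjugate through $i_2$ to $\Xi_k(w)$ on $D$, with orbits, action values, Conley-Zehnder indices, and non-degeneracy all preserved under the pushforward.

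Next I would reduce from $D$ to $C$. By the defining condition $\xi_{k,w}(t)\restriction_C = \psi_{k,w}(t)$ together with the fact that $\Xi_k(w) \in Ham_c(D,\omega)$ is supported in a neighborhood of $e_2(C)$ with Hamiltonian constant outside on each boundary component (so no extra non-degenerate fixed points with non-trivial orbit class arise in $D \setminus e_2(C)$), every relevant fixed point of $\Xi_k(w)$ in $D$ corresponds via $e_2$ to a fixed point of $\Psi_k(w)$ on $C$, and the identification respects free homotopy classes through the injection $\tau_{e_2}$.

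The key step is ruling out that additional orbit classes of $\Psi_k(w)$ on $C$ might contribute to the count of fixed points whose orbits represent $\tau_{i_2}(\alpha_{k,w})$ (respectively $\tau_{i_2}(\alpha_k^\prime)$) on $\Sigma$, since $\tau_{i_2 \circ e_2}$ need not be injective. Here I would invoke Claim~\ref{claim:3} (to be established later in the paper), which asserts that every orbit class of an eggbeater map is represented in $\pi_1(C,s_0) \simeq F_3 = \langle a,b,c \rangle$ by a word of the form $\delta = \Pi_j a^{k_j} c^{-u_j} b^{l_j} c^{v_j}$ with $u_j,v_j \in \{0,1\}$, precisely matching the hypothesis on $\delta$ in Lemma~\ref{lemma:2}. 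The target classes $\alpha_{k,w}$ and $\alpha_k^\prime$ themselves are represented by $\beta = \Pi_j a^{m_j} b^{n_j}$ with all exponents nonzero (for $\alpha_{k,w}$ because $w$ is balanced; for $\alpha_k^\prime$ because $k \in K$ forces $\nu_j k/L, \mu_j k/L \in \Z_{>0}$). Since $(i_2 \circ e_2)_* \colon F_3 \to \pi_1(\Sigma_2)$ is, by the construction of $i_2$ in Subsection~\ref{sec:3.2.1}, the homomorphism $i_*$ displayed in Subsection~\ref{sec:outline}, Lemma~\ref{lemma:2} forces $\delta \sim \beta$ in $F_3$ whenever $i_*\delta$ is conjugate to $i_*\beta$ in $\pi_1(\Sigma_2)$, so the count of fixed points on $\Sigma$ in class $\tau_{i_2}(\alpha_{k,w})$ exactly matches the count on $C$ in class $\alpha_{k,w}$ (and likewise for $\alpha_k^\prime$).

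With this counting reduction in place, Claims~\ref{cl:2} and~\ref{cl:2.2} immediately yield the $2^{2r}$ fixed points with the action and Conley-Zehnder index formulas of Equations~\ref{eq:2},\ref{eq:3} for Proposition~\ref{prop:new}, and the linearly growing action gaps for Proposition~\ref{prop:new2}, since the pushforward $i_2$ preserves Hamiltonian values along orbits. I expect the principal obstacle to be the careful formulation of Claim~\ref{claim:3}, exhibiting every orbit class of the piecewise-linear eggbeater model on $C$ in the algebraic form $\delta$ required by Lemma~\ref{lemma:2}, together with a meticulous identification of the induced map $(i_2 \circ e_2)_*$ with the precise homomorphism $i_*$ of Lemma~\ref{lemma:2} rather than a conjugate or twisted version. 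For genus 3 an alternative route via Section~\ref{sec:incompressibility}, which avoids Lemma~\ref{lemma:2} entirely, is also available.
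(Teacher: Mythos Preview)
Your overall strategy matches the paper's: reduce the count on $\Sigma$ to the count on $C$ via Claims~\ref{claim:3} and Lemma~\ref{lemma:2} (the paper packages the latter as Claim~\ref{claim:4}), then invoke Claims~\ref{cl:2} and~\ref{cl:2.2}. The use of Lemma~\ref{lemma:2} and the identification of $(i_2\circ e_2)_*$ with the homomorphism $i_*$ are correct.

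There is, however, a genuine gap in your reduction from $D$ to $C$. You assert that $\Xi_k(w)$ is ``supported in a neighborhood of $e_2(C)$'', but this is false: by construction $D = C \sqcup C_1 \sqcup C_2$, and the Hamiltonians $g_{k,V}, g_{k,H}$ defined in Subsection~\ref{sec:3.2.1} have nontrivial components $\pm(c_j)_* h_k$ on the extra annuli $C_1, C_2$. The time-one map $\Xi_k(w)$ therefore acts on each $C_j$ as a power of the shear $f_k$, and its closed orbits there represent powers of $g_1$ (on $C_1$) or $g_3$ (on $C_2$) in $\pi_1(\Sigma)$. These are certainly nontrivial free homotopy classes, so your blanket dismissal ``no extra non-degenerate fixed points with non-trivial orbit class arise in $D\setminus e_2(C)$'' is not justified by what you wrote.

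The paper closes this gap with a separate step, Claim~\ref{claim:5}, which shows that no fixed point in $i_2(C_1\cup C_2)$ can have orbit class $\tau_{i_2}(\alpha_{k,w})$ or $\tau_{i_2}(\alpha_k')$. The argument is algebraic: such an orbit would force a conjugacy in $\pi_1(\Sigma)$ between $g_1^n$ (or $g_3^n$) and $\Pi_m(g_1g_3)^{k_{2m-1}}(g_2 g_1^{-1} g_2^{-1} g_3)^{k_{2m}}$, and Claim~\ref{cl:length} shows the latter has length $>1$ in the amalgamated-free-product decomposition of $\pi_1(\Sigma)$ while the former has length $1$. You could alternatively try to argue that fixed points of a pure shear on an annulus are always degenerate (the differential is unipotent), which would also dispose of $C_1,C_2$; but that is a different argument from the one you gave, and you would need to make it explicit. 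Either way, the passage from $D$ to $C$ requires a real step that your proposal skips.
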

        
        Note that since $\pi_0(\mathscr{L}C) \subset \pi_0(\mathscr{L}D)$, the expressions $\tau_{i_2}(\alpha_{k,w})$ and $\tau_{i_2}(\alpha_k^\prime)$ are well-defined.
        
        \subsubsection{The construction}
        \label{sec:3.2.1}
            In a sense, the proof for $\Sigma$ of genus 2 is harder than for genus 3. In the following, we will show the proof for $\Sigma$ of genus 2, and comment on the differences to the proof for genus 3 surfaces when they arise.
            
            Consider $C$, defined in Subsection~\ref{sec:geometric}, and two additional copies of the cylinder $C_*$, $C_1$ and $C_2$. Our manifold $D$ will be $D = C \bigsqcup C_1 \bigsqcup C_2$, equipped with the standard symplectic form $\omega_0 = dx \wedge dy$ on every component. Denote the symplectic inclusions $C_* \hookrightarrow C_1,C_2$ by $c_1,c_2$ respectively. Note that the additional annuli, $C_1$ and $C_2$, are needed to make sure that $i_2$ and $F_{k,w}$ satisfy Condition~\ref{eq:condition}; that is, to enable $F_{k,w} \circ i_2^{-1}: \Ima(i_2) \to \R$ to be extended by a locally constant function to a function on all of $\Sigma$.
            
            The symplectic embedding $i_2:(D,\omega_0) \hookrightarrow (\Sigma, \sigma)$ is built in stages. First define $i_2$ on $C$: embed $C$ symplectically into $\R^2$, embed the plane $\R^2$ into $S^2$, and then add 2 handles as shown in Figure~\ref{fig:handles}; that is, $C$ seperates the sphere into 4 connected components, each having two neighbors; connect any two non-neighboring components with a handle. In the case of genus 3, add a handle inside the 'outside' component to obtain a surface of genus 3 (we will refer to this handle as the extra handle). This defines $i_2 \restriction_C$. Define $i_2$ on $C_1 \bigsqcup C_2$ by embedding each of them symplectically into one of the non-extra handles (with $C_1$ and $C_2$ on different handles), such that the images $i_2(C_1), i_2(C_2)$ are not contractible. The orientation of the embeddings $i_2\restriction_{C_1}, i_2\restriction_{C_2}$, and which goes on which handle, will be defined immediately. The embedding $i_2$ can be seen in Figure~\ref{fig:handles}.
            
            \begin{figure}[!ht]
                \centering
                \begin{minipage}{.5\textwidth}
                    \centering
                    \scalebox{0.75}{
                        \begin{tikzpicture}
                            \draw[gray,ultra thin] (-1,0) circle (3);
                            \draw[gray,ultra thin] (-1,0) circle (2.5);
                            \draw (-1.5,3) node[anchor=south] {\Large $i_2(C_V)$};
                            
                            \draw[gray,ultra thin] (1,0) circle (3);
                            \draw[gray,ultra thin] (1,0) circle (2.5);
                            \draw (1.5,3) node[anchor=south] {\Large $i_2(C_H)$};
                            
                            \filldraw[fill=white,draw=black,thick]
                                (0,-2) ++(35:3.7) arc (35:145:3.7) -- 
                                ++(-35:0.6) arc (145:35:3.1) -- cycle;
                            \filldraw[fill=white,draw=black,thick] (-2.75,0) circle (0.3);
                            \filldraw[fill=white,draw=black,thick] (2.75,0) circle (0.3);
                            
                            \filldraw[fill=white,draw=black,thick]
                                (-1.5,-3) ++(54:2.8) arc (54:-54:2.8) --
                                ++(126:0.6) arc (-54:54:2.2) -- cycle;
                            \filldraw[fill=white,draw=black,thick] (0,-1) circle (0.3);
                            \filldraw[fill=white,draw=black,thick] (0,-5) circle (0.3);
                            
                            \draw[gray, ultra thin] (-1.5,-3) ++(-20:2.5) ++(-20:0.3) arc (-20:-200:0.3);
                            \draw[gray, dashed] (-1.5,-3) ++(-20:2.5) ++(160:0.3) arc (160:-20:0.3);
                            \draw[gray, ultra thin] (-1.5,-3) ++(-22:2.5) ++(-22:0.3) arc (-22:-202:0.3);
                            \draw[gray, dashed] (-1.5,-3) ++(-22:2.5) ++(158:0.3) arc (158:-22:0.3);
                            
                            \draw[gray, ultra thin] (0,-2) ++(90:3.4) ++(90:0.3) arc (90:270:0.3);
                            \draw[gray, dashed] (0,-2) ++(90:3.4) ++(270:0.3) arc (270:450:0.3);
                            \draw[gray, ultra thin] (0,-2) ++(92:3.4) ++(92:0.3) arc (92:272:0.3);
                            \draw[gray, dashed] (0,-2) ++(92:3.4) ++(272:0.3) arc (272:452:0.3);
                            
                            \draw (0,1) node[anchor=north] {\Large $i_2(C_1)$};
                            \draw (1.2,-4) node[anchor=west] {\Large $i_2(C_2)$};
                            
                            \filldraw[black] (0,-2.55) circle (2pt);
                            \draw (0,-2.8) node[anchor=north] {\Large $i_2(s_0)$};
                        \end{tikzpicture}
                    }
                    \caption{adding handles on $S^2$ after embedding $C$: this whole figure is in $S^2$.}
                    \label{fig:handles}
                \end{minipage}%
                \begin{minipage}{.5\textwidth}
                    \centering
                    \pgfdeclarelayer{bg}
                    \pgfsetlayers{bg,main}
                    \hspace{-2.5cm}
                    \scalebox{0.9}{
                        \begin{tikzpicture}
                            \filldraw[fill=white,draw=black,thick]
                                (0,-2) ++(35:3.7) arc (35:145:3.7) -- 
                                ++(-35:0.6) arc (145:35:3.1) -- cycle;
                            \filldraw[fill=white,draw=black,thick] (-2.75,0) circle (0.3);
                            \filldraw[fill=white,draw=black,thick] (2.75,0) circle (0.3);
                            
                            \filldraw[fill=white,draw=black,thick]
                                (-1.5,-3) ++(54:2.8) arc (54:-54:2.8) --
                                ++(126:0.6) arc (-54:54:2.2) -- cycle;
                            \filldraw[fill=white,draw=black,thick] (0,-1) circle (0.3);
                            \filldraw[fill=white,draw=black,thick] (0,-5) circle (0.3);
                            
                            \node (origin) at (0,-2.55) {};
                            \node (A) at ($(0,-2) + (145:3.4)$) {};
                            \node (B) at ($(0,-2) + (35:3.4)$) {};
                            \node (C) at ($(-1.5,-3) + (54:2.6)$) {};
                            \node (D) at ($(-1.5,-3) + (-54:2.6)$) {};
                            
                            \filldraw[black] (origin) circle (2pt);
                            \draw (0,-2.8) node[anchor=north] {\Large $i_2(s_0)$};
                            
                            \begin{scope}
                            [gray,dashed,decoration={markings,mark=at position 0.55 with {\arrow[scale=2]{>}}}]
                                \draw
                                    (origin) to[out=140,in=235] (A) arc (145:35:3.5) to[out=-55,in=0] (0,0.5) to[out=180,in=122,distance=50] (origin)
                                    decorate {(A) to[out=55,in=130] (B)};
                                \draw
                                    (origin) to[out=100,in=140,distance=30] (C) arc (54:-54:2.6) to[out=215,in=215,distance=60] (origin)
                                    decorate {(D) arc (-54:54:2.2)};
                            \end{scope}
                            
                            \draw (-4.3,0) node {\Large $g_1$};
                            \draw (0,2) node {\Large $g_2$};
                            \draw (1.5,-1) node {\Large $g_3$};
                            \draw (1.5,-3.5) node {\Large $g_4$};
                    
                            \begin{pgfonlayer}{bg}
                                \begin{scope}
                                [gray,decoration={markings,mark=at position 0.4 with {\arrow[scale=2]{>}}}]
                                    \draw[postaction={decorate}] (origin) to[out=160,in=123,distance=200] (origin);
                                    \draw[postaction={decorate}] (origin) to[out=120,in=45,distance=130] (origin);
                                \end{scope}
                            \end{pgfonlayer}
                        \end{tikzpicture}
                        }
                    \caption{the generators of $\pi_1(\Sigma, i_2(s_0))$, dashed lines only for readability.}
                    \label{fig:gens}
                \end{minipage}
            \end{figure}
            
            Recall $s_0 \in C$, and $a,b,c$, the generators of $\pi_1(C, s_0)$. Fix some $s^\prime_k \in C_k$ for $k=1,2$. The generators of $\pi_1(C_k,s^\prime_k) = \Z$ are denoted $d_k$. These are positively oriented - that is, $d_k = [t\mapsto(0,Lt) \in C_k]$ as elements in $\pi_1(C_k,s_k^\prime)$. Denote the generators of $\pi_1(\Sigma, i_2(s_0))$ by $g_1,..,g_4$, such that $\pi_1 (\Sigma, i_2(s_0)) = \left\langle g_1,g_2,g_3,g_4\middle|[g_1,g_2] [g_3,g_4] \right\rangle$, and the loops $g_1$ and $g_3$ go around the two handles (see Figure~\ref{fig:gens}). In the case where $\Sigma$ is of genus 3, denote $\pi_1(\Sigma, i_2(s_0)) = \langle g_1, ..., g_6 | [g_1,g_2] [g_3,g_4] [g_5,g_6] \rangle$, such that the loops $g_1$ and $g_3$ still go around the two non-extra handles (the same as in Figure~\ref{fig:gens}), and such that $g_5$ goes around the extra handle. The loops $g_{k+1}$ go along the handle that the loops $g_k$ go around (for $k = 1,3,5$), oriented such that $[g_1,g_2][g_3,g_4][g_5,g_6] = 1$ in $\pi_1(\Sigma, i_2(s_0))$.
            
            Choose the image of $C_1$ to be contained in the handle that the loop $g_1 \in \pi_1(\Sigma, i_2(s_0))$ goes around (in Figures~\ref{fig:handles} and~\ref{fig:gens} this is the upper handle), and so $i_2(C_2)$ is contained in the handle that the loop $g_3$ goes around. Since the image of $C_1$ is located on a handle, is non-contractible, and $i_2$ is an embedding, one can choose how to orient the image of $C_1$:
            \[
                \tau_{i_2} \circ \eta_{C_1,s_1^\prime} (d_1) = \eta_{\Sigma, i_2(s_0)} (g_1) \text{ or } \tau_{i_2} \circ \eta_{C_1,s_1^\prime} (d_1) = \eta_{\Sigma, i_2(s_0)} (g_1^{-1}) ,
            \]
            and similiarly one can choose the orientation of $i_2(C_2)$:
            \[
                \tau_{i_2} \circ \eta_{C_2,s_2^\prime} (d_2) = \eta_{\Sigma, i_2(s_0)} (g_3) \text{ or } \tau_{i_2} \circ \eta_{C_2,s_2^\prime} (d_2) = \eta_{\Sigma, i_2(s_0)} (g_3^{-1}) .
            \]
            Choose the orientation of the embeddings $i_2\restriction_{C_1}, i_2\restriction_{C_2}$ to be such that the elements $d_1,d_2$ map to $\eta_{\Sigma, i_2(s_0)} (g_1)$ and $\eta_{\Sigma, i_2(s_0)} (g_3^{-1})$, respectively, under $\tau_{i_2} \circ \eta_{C_k, s_k^\prime}$ (for $k = 1,2$). This concludes the definition of the embedding $i_2$. \\
            
            By construction of $i_2: D \hookrightarrow \Sigma$, the pushforward $(i_2)_*: \pi_1(D, s_0) \to \pi_1(\Sigma, i_2(s_0))$ acts on the generators $a,b,c$ of $\pi_1(D,s_0)$ so:
            \begin{myequation}
                a \mapsto g_1 g_3 , \\
                b \mapsto g_2 g_1^{-1} g_2^{-1} g_3 , \\
                c \mapsto g_3 .
            \end{myequation}
            
            In order to define the Hamiltonian $F_{k,w}$ on $D$, a similar procedure as the one that defines $G_{k,w}$ is followed. Recall $h = h_k: C_* \to \R$, the Hamiltonian on $C_*$:
            \[
                (x,[y]) \mapsto -\frac{k}{2} + k \int_{-1}^x u(s) ds ,
            \]
            defined in Subsection~\ref{sec:3.1}. Note that the pairs $e_2 \circ c_V, h_k$ and $e_2 \circ c_H, h_k$ satisfy Condition~\ref{eq:condition}. Define two autonomous Hamiltonians on $D$:
            \begin{myequation}
                g_{k,V}: D \to \R , \\
                g_{k,V} = (e_2 \circ c_V)_* h_k \bigsqcup - (c_1)_* h_k \bigsqcup (c_2)_* h_k ; \\
                g_{k,H}: D \to \R , \\
                g_{k,H} = (e_2 \circ c_H)_* h_k \bigsqcup (c_1)_* h_k \bigsqcup (c_2)_* h_k . \\
            \end{myequation}
            
            Given $w \in F_2 = \langle V,H \rangle$, define $\xi_{k,w}(t)$, the flow on $D$, as the concatenation of the flows induced by $g_{k,V}, g_{k,H}$, in the order induced by $w$. The Hamiltonian which induces this flow is denoted $F_{k,w}: S^1 \times D \to \R$, and its time-one-map is $\Xi_k(w): D \to D$. Another equivalent way to define $\Xi_k(w)$ is to denote the time-one-maps of $g_{k,V}$, $g_{k,H}$ by $g_{k,V}^\prime, g_{k,H}^\prime$ respectively, and define the homomorphism
            \begin{myequation}
                \Xi_k: F_2 \to Ham_c(D,\omega_0) , \\
                V \mapsto g_{k,V}^\prime, H \mapsto g_{k,H}^\prime .
            \end{myequation}
            
            Next, note that $F_{k,w}$ and $i_2$ satisfy Condition~\ref{eq:condition} for all $k,w$. Therefore we can define the pushforwards $(i_2)_* F_{k,w}: S^1 \times \Sigma \to \R, (i_2)_* \xi_{k,w}: \R \times \Sigma \to \Sigma, (i_2)_* \Xi_k(w): \Sigma \to \Sigma$.
        
        \subsubsection{Proof of Propositions~\ref{prop:new},~\ref{prop:new2}}
        \label{sec:3.2.2}
            First we state a few helper claims, and use them to prove Propositions~\ref{prop:new},~\ref{prop:new2}. Then, we will show the proofs of the claims. For the rest of this subsection, fix some $2 \leq k \in \N, w = V^{N_1} H^{M_1} ... V^{N_r} H^{M_r} \in F_2$ balanced. Recall that $\alpha_{k,w} = \eta_{C,s_0}(\beta_{k,w})$, $\alpha_k^\prime = \eta_{C,s_0}(\beta_k^\prime)$, and also note that since the connected component of $s_0$ in $D$ is $e_2(C)$, we shall abuse notation and denote $\pi_1(C,s_0) = \pi_1(D,s_0), \eta_{C,s_0} = \eta_{D,s_0}$, etc.
            
            The first claim characterizes all egg-beater orbits in $C$ in terms of their free homotopy classes.
            
            \begin{claim}
            \label{claim:3}
                Let $\tilde{\gamma} : S^1 \to C$ be a closed $\psi_{k,w}$-orbit. Then there exist $k_1,...,k_r, l_1,...,l_r \in \Z$, $u_1,...,u_r, v_1,...,v_r \in \{0,1\}$ such that $[\tilde{\gamma}]_{\pi_0(\mathscr{L}C)} = \eta_{C,s_0}(\Pi_{m=1}^r a^{k_m}c^{-u_m}b^{l_m}c^{v_m})$.
            \end{claim}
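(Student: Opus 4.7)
The plan is to decompose the orbit $\tilde\gamma$ into alternating arcs in $C_V$ and $C_H$ according to the factors of $w$, classify each arc up to homotopy in its containing annulus, and reassemble the pieces into the claimed form using the relations $a = q_1 q_2$, $b = q_3 q_4$, $c = q_3 q_2$ among the groupoid generators and the loop generators of $\pi_1(C, s_0)$.

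First I would decompose the flow. By construction, $\psi_{k,w}(t)$ is the concatenation of the flows generated by $f_{k,V}^{N_1}, f_{k,H}^{M_1}, \ldots, f_{k,V}^{N_r}, f_{k,H}^{M_r}$, so $\tilde\gamma$ decomposes as $\gamma_1 \delta_1 \cdots \gamma_r \delta_r$, with each $\gamma_j$ an arc in $C_V$ and each $\delta_j$ an arc in $C_H$. Since $f_{k,V}$ is the identity off $C_V$ and $f_{k,H}$ is the identity off $C_H$, every transition point between consecutive arcs either lies in $C_V \cap C_H = S_0 \sqcup S_1$, or else is fixed by the adjacent shear (so the neighboring arc is constant there and may be collapsed). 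After discarding constant arcs and freely homotoping the remaining transition points within the contractible squares $S_0, S_1$ to $s_0$ or $s_1$ respectively, one obtains endpoint labels $a_0, b_1, a_1, b_2, \ldots, b_r, a_r \in \{0,1\}$ with $a_r = a_0$, so that $\gamma_j$ goes from $s_{a_{j-1}}$ to $s_{b_j}$ in $C_V$ and $\delta_j$ goes from $s_{b_j}$ to $s_{a_j}$ in $C_H$.

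Next I would classify each arc. Because $C_V$ is an annulus, $\gamma_j$ is determined rel endpoints by a winding number $k_j \in \Z$ and admits one of four canonical representatives, namely $a^{k_j}$, $a^{k_j}q_1$, $q_1^{-1}a^{k_j}$, or $q_1^{-1}a^{k_j}q_1$, depending on the pair $(a_{j-1}, b_j)$; analogously $\delta_j$ has a winding $l_j$ and a canonical form written in $b$ and $q_3$. A short case analysis over the eight possibilities for $(a_{j-1}, b_j, a_j) \in \{0,1\}^3$, using the consequences $q_1 = aq_2^{-1}$, $q_4 = q_3^{-1}b$ of $a = q_1 q_2$ and $b = q_3 q_4$ together with $c = q_3 q_2$ and $c^{-1} = q_2^{-1} q_3^{-1}$, then shows that each V-H pair $\gamma_j \delta_j$ (after possibly absorbing a leading $q_2$ from $\gamma_{j+1}$ when $a_j = 1$) takes the form $a^{k_j'} c^{-u_j} b^{l_j'} c^{v_j}$ with $u_j = b_j$ and $v_j = a_j$, where $k_j', l_j' \in \Z$ differ from the raw windings $k_j, l_j$ by at most $1$.

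Finally, multiplying over $j$ yields the product form in $\pi_1(C, s_{a_0})$; if $a_0 = 1$, conjugating the whole loop by $q_2$ produces an element of $\pi_1(C, s_0) = \langle a, b, c \rangle \cong F_3$ in the same conjugacy class, and the free homotopy class of $\tilde\gamma$ is then $\eta_{C,s_0}$ of this element. The main obstacle is the case bookkeeping: the half-turn letters $q_1, q_2, q_3, q_4$ straddling consecutive blocks must recombine to produce exactly a $c^{\pm 1}$ factor and no stray $q_i$. Because $\{a, b, c\}$ freely generates $\pi_1(C, s_0)$, no additional relations intervene, so the recombination is a purely algebraic check in the free group $F_3$.
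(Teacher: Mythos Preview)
Your approach is correct and is essentially the argument the paper relies on, but you should be aware that the paper itself does not spell it out: its proof reduces to the piecewise-linear flow $\psi_{k,w}'$ (which is freely homotopic to $\psi_{k,w}$ as an isotopy) and then simply cites Lemma~4.2 of \cite{10}, noting the dictionary $k_\mu,l_\mu,u_\mu,v_\mu \leftrightarrow n_\mu,m_\mu,\epsilon_\mu,\nu_\mu$. What you have written is a self-contained reconstruction of exactly that lemma's proof: split the orbit into alternating arcs in $C_V$ and $C_H$, record the square $S_0$ or $S_1$ at each transition, replace each arc by a winding number together with the appropriate half-turn $q_i$, and then use the groupoid relations $a=q_1q_2$, $b=q_3q_4$, $c=q_3q_2$ to rewrite each $V$--$H$ block as $a^{k_j'}c^{-u_j}b^{l_j'}c^{v_j}$. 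So there is no genuine difference in method, only in level of detail.

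Two small points worth tightening. First, your treatment of constant arcs is slightly understated: if some intermediate point lies outside $C_V$ (resp.\ $C_H$) the corresponding $V$-arc (resp.\ $H$-arc) collapses, and after merging adjacent same-type arcs the remaining transitions are forced into $C_V\cap C_H=S_0\sqcup S_1$ precisely because a nontrivial $V$-arc ends in $C_V$ and a nontrivial $H$-arc begins in $C_H$; this is the step that justifies your sentence ``freely homotoping the remaining transition points within the contractible squares''. Second, after such merging the number of $V$--$H$ blocks may drop below $r$, but the statement asks for exactly $r$ factors; you recover this by padding with trivial blocks $a^0c^0b^0c^0=1$, which is harmless since $k_m,l_m$ are allowed to be zero. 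With these two remarks made explicit your write-up would be a complete standalone proof of what the paper obtains by citation.
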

            
            The second claim provides a partial injectivity property of $\tau_{i_2}$.
            
            \begin{claim}
            \label{claim:4}
                Let $\gamma = \eta_{D,s_0}(\Pi_{m=1}^r a^{k_m}c^{-u_m}b^{l_m}c^{v_m}) \in \pi_0(\mathscr{L}D)$ for some $k_1,...,k_r,l_1,...,l_r \in \Z$, $u_1,...,u_r,v_1,...,v_r \in \{0,1\}$, and assume $\tau_{i_2}(\gamma) = \tau_{i_2}(\alpha_{k,w})$. Then $\gamma = \alpha_{k,w}$.
                
                Alternatively, if $k \in K$, assume $\tau_{i_2}(\gamma) = \tau_{i_2}(\alpha_k^\prime)$. Then $\gamma = \alpha_k^\prime$.
            \end{claim}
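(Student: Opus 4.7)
The plan is to translate the hypothesis into a conjugacy statement in $\pi_1(\Sigma)$ and then apply Lemma~\ref{lemma:2}. Let $\delta = \Pi_{m=1}^r a^{k_m}c^{-u_m}b^{l_m}c^{v_m} \in F_3 = \pi_1(D, s_0)$, so $\gamma = \eta_{D,s_0}(\delta)$; recall also that $\alpha_{k,w} = \eta_{D,s_0}(\beta_{k,w})$ with $\beta_{k,w} = \Pi_{j=1}^r a^{k\,sign(N_j)}b^{k\,sign(M_j)}$. By the commutative diagram $\tau_{i_2} \circ \eta_{D,s_0} = \eta_{\Sigma, i_2(s_0)} \circ (i_2)_*$, together with the characterization that $\eta_{M,x}(\alpha) = \eta_{M,x}(\beta)$ iff $\alpha \sim \beta$ in $\pi_1(M,x)$, the assumption $\tau_{i_2}(\gamma) = \tau_{i_2}(\alpha_{k,w})$ is equivalent to $(i_2)_*\delta \sim (i_2)_*\beta_{k,w}$ in $\pi_1(\Sigma, i_2(s_0))$. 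When $\Sigma = \Sigma_2$, the map $(i_2)_* : F_3 \to \pi_1(\Sigma_2)$ is, by the construction of $i_2$ in Subsection~\ref{sec:3.2.1}, precisely the homomorphism called $i_*$ in the statement of Lemma~\ref{lemma:2}.

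Next I would verify the hypotheses of the lemma. The word $\delta$ already has the required syntactic form $\Pi a^{k_j} c^{-u_j} b^{l_j} c^{v_j}$. Since $w$ is balanced, every $N_j, M_j$ is nonzero, hence $sign(N_j), sign(M_j) \in \{\pm 1\}$ and every exponent appearing in $\beta_{k,w}$ equals $\pm k \neq 0$ (using $k \geq 2$). Thus $\beta_{k,w}$ has the form of $\beta$ in Lemma~\ref{lemma:2} with $m_j, n_j \neq 0$. Applying the lemma produces $\delta \sim \beta_{k,w}$ in $F_3$, and applying $\eta_{D,s_0}$ to both sides yields $\gamma = \alpha_{k,w}$, as required. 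The alternative statement with $\alpha_k^\prime = \eta_{D,s_0}(\beta_k^\prime)$ is handled identically: for $k \in K$ the exponents $\nu_j k/L, \mu_j k/L$ of $\beta_k^\prime$ are positive integers (since $\nu_j, \mu_j \in (0,1)$ and $k > 0$), so Lemma~\ref{lemma:2} again applies.

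For $\Sigma = \Sigma_3$, a short additional step is required because Lemma~\ref{lemma:2} is formulated for $\pi_1(\Sigma_2)$. Let $q : \pi_1(\Sigma_3) \to \pi_1(\Sigma_2)$ be the quotient map that kills the generators $g_5, g_6$; this is a well-defined surjective homomorphism since the single relator $[g_1,g_2][g_3,g_4][g_5,g_6]$ descends to $[g_1,g_2][g_3,g_4]$, and being a homomorphism it preserves conjugacy. Because $(i_2)_* a, (i_2)_* b, (i_2)_* c$ involve only $g_1, \ldots, g_4$, the composition $q \circ (i_2)_*$ agrees with the genus~2 map of Lemma~\ref{lemma:2}. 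Hence the conjugacy $(i_2)_*\delta \sim (i_2)_*\beta_{k,w}$ in $\pi_1(\Sigma_3)$ descends under $q$ to the corresponding conjugacy in $\pi_1(\Sigma_2)$, reducing to the genus~2 case. The proof is thus essentially a direct application of Lemma~\ref{lemma:2}; there is no serious obstacle beyond the bookkeeping of matching $\delta$ and $\beta_{k,w}$ (resp.\ $\beta_k^\prime$) to the syntactic hypotheses of the lemma and, in genus $3$, the projection $q$.
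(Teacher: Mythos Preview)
Your proposal is correct and follows essentially the same route as the paper's proof: translate the free-homotopy equality into a conjugacy in $\pi_1(\Sigma)$ via the commutative square, invoke Lemma~\ref{lemma:2} (after checking the nonvanishing of the exponents in $\beta_{k,w}$ and $\beta_k'$), and in genus~3 first project via the quotient killing $g_5,g_6$ (the paper calls this map $p_{1234}$). Your write-up is in fact slightly more explicit than the paper's in verifying that $\beta_{k,w}$ and $\beta_k'$ meet the syntactic hypotheses of the lemma.
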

            
            The third claim will allow us to focus our calculations to $\xi_{k,w}$-orbits in $D$ which are in fact in $e_2(C)$.
            
            \begin{claim}
            \label{claim:5}
                Let $k \in \N$ be large enough, and let $z \in \Sigma$ be a non-degenerate fixed point of $(i_2)_* \Xi_k(w)$ whose $(i_2)_* \xi_{k,w}$-orbit is in the class $\tau_{i_2}(\alpha_{k,w})$, i.e. $[(i_2)_* \xi_{k,w}(t) (z)]_{\pi_0(\mathscr{L}\Sigma)} = \tau_{i_2}(\alpha_{k,w})$.
                
                Alternatively, let $k \in K$ be large enough, and let $z \in \Sigma$ be a non-degenerate fixed point of $(i_2)_* \Xi_k(w)$ whose $(i_2)_* \xi_{k,w}$-orbit is in the class $\tau_{i_2}(\alpha_k^\prime)$.
                
                In both cases, $z \in i_2(C)$. \\
            \end{claim}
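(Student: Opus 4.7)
The plan is to rule out each region of $\Sigma$ outside $i_2(C)$ as a possible location for $z$, using the set-theoretic decomposition $\Sigma = i_2(C) \sqcup i_2(C_1) \sqcup i_2(C_2) \sqcup (\Sigma \setminus i_2(D))$. For the region $\Sigma \setminus i_2(D)$, the Hamiltonian $F_{k,w}$ is supported inside $D$ by construction, so its pushforward $(i_2)_* F_{k,w}$ is locally constant there. Hence the Hamiltonian vector field vanishes on a neighborhood of any $z \in \Sigma \setminus i_2(D)$, which forces $(i_2)_* \Xi_k(w)$ to agree with the identity on such a neighborhood. In particular, $d\bigl((i_2)_* \Xi_k(w)\bigr)_z$ has $1$ as an eigenvalue, so $z$ is degenerate, contradicting the hypothesis.

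For $z \in i_2(C_1)$, I would first note that the components of $D$ are each flow-invariant under $g_{k,V}$ and $g_{k,H}$ (which are defined as disjoint unions over these components), so the entire orbit of $z$ remains in the annulus $i_2(C_1)$. Its free homotopy class is therefore determined by a power of the generator $d_1$ of $\pi_1(C_1)$; under $\tau_{i_2}$ and the orientation choice $d_1 \mapsto g_1$ made in Subsection~\ref{sec:3.2.1}, this class maps to the conjugacy class $[g_1^j]_{\pi_1(\Sigma)}$ for some $j \in \Z$. The hypothesis on the orbit class then forces $(i_2)_* \beta_{k,w} \sim g_1^j$ (respectively $(i_2)_* \beta_k^\prime \sim g_1^j$) in $\pi_1(\Sigma)$. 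I would then invoke the length function $len$ of Section~\ref{sec:2} on the amalgamated presentation $\pi_1(\Sigma) = \langle H_1 * H_2, A = B, \phi \rangle$: trivially $len(g_1^j) \leq 1$ since $g_1^j \in H_1$, but Claim~\ref{cl:length} yields $len((i_2)_* \beta_{k,w}) > 1$ as soon as every exponent appearing has absolute value at least $2$. For $\beta_{k,w}$ the exponents are $\pm k$, so this holds whenever $k \geq 2$; for $\beta_k^\prime$ the positive-integer exponents $\nu_j k / L, \mu_j k / L$ exceed $2$ once $k \in K$ is sufficiently large. The symmetric argument, with $d_2 \mapsto g_3^{-1}$ and $len(g_3^j) \leq 1$, rules out $z \in i_2(C_2)$.

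There is no serious obstacle here: the proof is essentially bookkeeping tailored to Claim~\ref{cl:length}, which was designed for exactly this setting. The one step requiring care is the identification of the free homotopy class of an orbit trapped in $i_2(C_1)$ or $i_2(C_2)$ with a power of $g_1$ or $g_3^{-1}$ respectively, which uses the explicit orientation choices made when defining $i_2$. The genus $3$ case is handled identically; the only change is that $\Sigma \setminus i_2(D)$ now also contains the extra handle, but this region is still fixed pointwise by $(i_2)_* \Xi_k(w)$, so the degeneracy argument continues to eliminate it.
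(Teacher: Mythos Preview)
Your proposal is correct and follows essentially the same argument as the paper: rule out $\Sigma \setminus i_2(D)$ by degeneracy, and rule out $i_2(C_1), i_2(C_2)$ by comparing the amalgamated-product length of $g_1^j$ or $g_3^j$ against that of $(i_2)_*\beta_{k,w}$ (resp.\ $(i_2)_*\beta_k'$) via Claim~\ref{cl:length}. The one point you gloss over in genus $3$ is that Claim~\ref{cl:length} is stated only for $\pi_1(\Sigma_2)$; the paper handles this by first applying the factor-preserving projection $p_{1234}: \pi_1(\Sigma_3) \to \pi_1(\Sigma_2)$ (which, being factor-preserving, does not increase $len$) before invoking Claim~\ref{cl:length}, so your remark that ``the genus $3$ case is handled identically'' hides this small additional step.
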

            
            \begin{proof}[Proof of Propositions~\ref{prop:new},~\ref{prop:new2}]
                Recall that Claims~\ref{cl:2},~\ref{cl:2.2}, proven in~\cite{10},~\cite{PS14}, are analogous to these propositions, but set in $C$. We want to show why they imply that these propositions hold in $\Sigma$. The proofs for both propositions are very similar, so we present them simultaneously.
                
                Let $\tilde{\gamma}: S^1 \to \Sigma$ be a closed orbit of $(i_2)_* \xi_{k,w}(t)$ in the class $\tau_{i_2}(\alpha_{k,w})$ (or $\tau_{i_2}(\alpha_k^\prime)$, in the case where $k \in K$) such that $z = \tilde{\gamma}(0)$ is a non-degenerate fixed point of $(i_2)_* \Xi_k(w)$. By Claim~\ref{claim:5}, if $k$ is large enough then $z \in i_2(C)$, therefore $y = i_2^{-1}(z) \in C$ is uniquely defined. Denote by $\tilde{\gamma_y} : S^1 \to C$ the $\xi_{k,w}(t)$-orbit of $y$. By Claim~\ref{claim:3}, there exist $k_1,...,k_r, l_1,...,l_r \in \Z , u_1,...,u_r, v_1,...,v_r \in \{0,1\}$ such that
                \[
                    [\tilde{\gamma_y}]_{\pi_0(\mathscr{L}C)} = \eta_{C,s_0}(\Pi_{m=1}^r a^{k_m}c^{-u_m}b^{l_m}c^{v_m}) .
                \]
                
                Since $\tau_{i_2}([\tilde{\gamma_y}]_{\pi_0(\mathscr{L}C)}) = [\tilde{\gamma}]_{\pi_0(\mathscr{L}\Sigma)} = \tau_{i_2}(\alpha_{k,w})$ (or $\tau_{i_2}(\alpha_k^\prime)$), one has $[\tilde{\gamma_y}]_{\pi_0(\mathscr{L}C)} = \alpha_{k,w}$ (or $[\tilde{\gamma_y}]_{\pi_0(\mathscr{L}C)} = \alpha_k^\prime$) by Claim~\ref{claim:4}.
                
                Thus non-degenerate fixed points of $(i_2)_* \Xi_k(w)$ in $\Sigma$ whose orbits are in the class $\tau_{i_2}(\alpha_{k,w})$ correspond in a 1-1 manner to non-degenerate fixed points of $\Xi_k(w)$ in $D$ whose orbits are in the class $\alpha_{k,w}$ (and there is a similar correspondence for the classes $\tau_{i_2}(\alpha_k^\prime),\alpha_k^\prime$). Note that since Claim~\ref{claim:4} uses Lemma~\ref{lemma:2}, this is the point where the algebraic Lemma~\ref{lemma:2} enters the proof. \\
                
                Therefore it is enough to restrict the analysis to the dynamics on $D$, and further, to the dynamics on $C$, since $z \in i_2(C)$. By Claim~\ref{cl:2}, for large enough $k$ there are $2^{2r}$ non-degenerate fixed points of $\Xi_k(w)$ in the class $\alpha_{k,w}$, indexed by $\vec{\epsilon} = (\epsilon_0,...,\epsilon_{2r-1}) \in \{\pm 1\}^{2r}$, such that their actions and Conley-Zehnder indices are given by Equations~\ref{eq:2},\ref{eq:3}. Since $i_2$ preserves actions and Conley-Zehnder indices by construction, this proves Proposition~\ref{prop:new}.
                
                Similarly, setting $w = (VH)^r$, by Claim~\ref{cl:2.2} for large enough $k \in K$ there are $2^{2r}$ non-degenerate fixed points of $\Xi_k(w)$ in the class $\alpha_k^\prime$, such that fixed points in different orbits have action gaps that grow linearly with $k$. The injection $i_2$ preserves actions, so this proves Proposition~\ref{prop:new2}.
            \end{proof}
            
            We finish this section with the proofs of the claims.
            
            \begin{proof}[Proof of Claim~\ref{claim:3}]
                Denote $z = \tilde{\gamma}(0)$. Recall that $\psi_{k,w}^\prime$ is the piecewise-linear version of $\psi_{k,w}$ defined in Subsection~\ref{sec:3.1}, and that it is the concatenation of some autonomous Hamiltonian isotopies $f_{0,V}^{N_1t}, f_{0,H}^{M_1t}, ...$; while $\psi_{k,w}$ is their composition. Thus $\tilde{\gamma}$ and $t \mapsto \psi_{k,w}^\prime(t) (z)$ are freely homotopic loops in $C$.
                
                Therefore, it is enough to show there exist some $k_1,...,k_r, l_1,...,l_r \in \Z$, $u_1,...,u_r, v_1,...,v_r \in \{0,1\}$ such that $[t \mapsto \psi_{k,w}^\prime(t) (z)]_{\pi_0(\mathscr{L}C)} = \eta_{C,s_0}(\Pi_{m=1}^r a^{k_m} c^{-u_m} b^{l_m} c^{v_m})$, since homotopic loops have the same $\eta_{C,s_0}$-images. This is shown in the proof of Lemma 4.2 in~\cite{10} - in their notation, $\psi_{k,w}^\prime(t)$ is denoted $\phi^t$, and $k_\mu, l_\mu, u_\mu, v_\mu$ are denoted $n_\mu, m_\mu, \epsilon_\mu, \nu_\mu$ respectively, for $1 \leq \mu \leq r$ (note we do not need the whole statement of Lemma 4.2).
            \end{proof}
            
            \begin{proof}[Proof of Claim~\ref{claim:4}]
                The two assumptions are similar, and the proofs for the two cases are the same. For concreteness, assume $\tau_{i_2}(\gamma) = \tau_{i_2}(\alpha_{k,w})$, the other case is analogous.
                
                Let $\delta \in \pi_1(C,s_0)$ be any element such that $\gamma = \eta_{C,s_0}(\delta)$. By assumption, $\tau_{i_2}(\gamma) = \tau_{i_2}(\alpha_{k,w})$. Commutativity implies $\eta_{\Sigma,i_2(s_0)}((i_2)_* \delta) = \eta_{\Sigma,i_2(s_0)}((i_2)_* \beta_{k,w})$, so one sees that $(i_2)_* \delta$ and $(i_2)_* \beta_{k,w}$ are conjugate in $\pi_1(\Sigma, i_2(s_0))$. 
                
                In the case where the genus of $\Sigma$ is 2, recall that $\pi_1(D,s_0) = F_3$, and that $(i_2)_* : F_3 \to \pi_1(\Sigma, i_2(s_0))$ is exactly the same homomorphism as in the requirements of Lemma~\ref{lemma:2}.
                
                In the case where the genus of $\Sigma$ is 3, denote
                \begin{myequation}
                    p_{1234}: \pi_1(\Sigma, i_2(s_0)) \to \langle g_1,g_2,g_3,g_4 | [g_1,g_2][g_3,g_4] \rangle , \\
                    g_1 \mapsto g_1, g_2 \mapsto g_2, g_3 \mapsto g_3, g_4 \mapsto g_4 , \\
                    g_5,g_6 \mapsto 1 , 
                \end{myequation}
                and note that $p_{1234} \circ (i_2)_*$ is exactly the homomorphism as in the requirements of Lemma~\ref{lemma:2}. Since $(i_2)_* \delta, (i_2)_* \beta_{k,w}$ are conjugate in $\pi_1(\Sigma, i_2(s_0))$, so are $(p_{1234} \circ (i_2)_*) (\delta), (p_{1234} \circ (i_2)_*) (\beta_{k,w})$ in $\langle g_1,g_2,g_3,g_4 | [g_1,g_2][g_3,g_4] \rangle$.
                
                In both cases, one can get by Lemma~\ref{lemma:2} that $\delta$ and $\beta_{k,w}$ are conjugate in $\pi_1(D,s_0)$, so $\gamma = \eta_{D,s_0}(\delta) = \eta_{D,s_0}(\beta_{k,w}) = \alpha_{k,w}$ by definition of $\eta_{D,s_0}$.
            \end{proof}
            
            \begin{proof}[Proof of Claim~\ref{claim:5}]
                Under both of the assumptions (that the orbit of $z$ is in the class $\tau_{i_2}(\alpha_{k,w})$ or $\tau_{i_2}(\alpha_k^\prime)$), the orbit of $z$ is in the class $\eta_{\Sigma, i_2(s_0)}(\Pi_{m=1}^r (g_1 g_3)^{k_{2m-1}} (g_2 g_1^{-1} g_2^{-1} g_3)^{k_{2m}})$ for some sequence $k_1,...,k_{2r} \in \Z$, since $(i_2)_*(a) = g_1 g_3, (i_2)_*(b) = g_2 g_1^{-1} g_2^{-1} g_3$. Note that for large enough $k$, the elements $k_m$ in this sequence all satisfy $|k_m| \geq 2$.
                
                As noted in Section~\ref{sec:2}, $\pi_1(\Sigma)$ is a free product of $H_1 = \langle g_1,g_2 \rangle, H_2 = \langle g_3,g_4 \rangle$ with amalgamation of subgroups $A = \langle [g_1,g_2] \rangle, B = \langle [g_3,g_4] \rangle$ by the isomorphism $\phi: A \to B: [g_1,g_2] \mapsto [g_3,g_4]^{-1}$. In the case where the genus of $\Sigma$ is 3, $\pi_1(\Sigma)$ is a free product of $H_1 = \langle g_1,g_2 \rangle, H_2 = \langle g_3,g_4,g_5,g_6 \rangle$ with amalgamation of subgroups $A = \langle [g_1,g_2] \rangle, B = \langle [g_3,g_4][g_5,g_6] \rangle$ by the isomorphism $\phi : A \to B : [g_1,g_2] \mapsto ([g_3,g_4][g_5,g_6])^{-1}$.
                
                Note that $z \in i_2(D)$, else $z$ is a degenerate fixed point. Assume by contradiction $z \in i_2(C_1 \cup C_2)$. Denote the orbit of $z$ under $(i_2)_* \xi_{k,w}(t)$ by $\tilde{\gamma}: S^1 \to \Sigma$. 
                
                If $z \in i_2(C_1)$, then $\tilde{\gamma}(t) \in i_2(C_1)$ for all $t$, since $\tilde{\gamma}(t) \in i_2(D)$ for all $t$, and $C_1$ is a connected component of $D$. This means $\tilde{\gamma}$ is (freely) homotopic to some loop $\tilde{\delta}: S^1 \to \Sigma$ based in $i_2(s_0)$ with $[\tilde{\delta}]_{\pi_1(\Sigma,i_2(s_0))} = g_1^n$ for some $n \in \Z$, since by construction $\tau_{i_2} \circ \eta_{C_1, s_1^\prime} (d_1) = \eta_{\Sigma,i_2(s_0)} (g_1)$ (recall that $d_1$ is the homotopy class of a loop going around $i_2(C_1)$ once). Denote the homotopy between $\tilde{\gamma},\tilde{\delta}$ by $F : S^1 \times [0,1] \to \Sigma$ with $F(\cdot,0) = \tilde{\gamma}, F(\cdot, 1) = \tilde{\delta}$. Then $\rho = F(0, \cdot)$ is a loop in $\Sigma$ based in $i_2(s_0)$, and $[\tilde{\gamma}]_{\pi_1(\Sigma,i_2(s_0))} = [\rho \# \tilde{\delta} \# \overline{\rho}]_{\pi_1(\Sigma, i_2(s_0))}$. The following holds:
                \begin{myequation}
                    \Pi_{m=1}^r (g_1 g_3)^{k_{2m-1}} (g_2 g_1^{-1} g_2^{-1}g_3)^{k_{2m}} \sim \\
                    \sim [\tilde{\gamma}]_{\pi_1(\Sigma, i_2(s_0))} = [\rho \# \tilde{\delta} \# \overline{\rho}]_{\pi_1(\Sigma, i_2(s_0))} = \chi g_1^n \chi^{-1} ,
                \end{myequation}
                where $\chi = [\rho]_{\pi_1(\Sigma, i_2(s_0))}$. Thus $\Pi_{m=1}^r (g_1 g_3)^{k_{2m-1}} (g_2 g_1^{-1} g_2^{-1}g_3)^{k_{2m}} \sim g_1^n$.
                
                Note that $(g_1^n)$, a sequence with a single element, is a cyclically reduced sequence of $\langle H_1*H_2 , A=B, \phi \rangle$, so $g_1^n$ is a cyclically reduced element, with $len(g_1^n) = 1$. Recall that $w$ is given in cyclically reduced form, thus $\forall \ 1 \leq m \leq r: M_m, N_m \neq 0$. Since for all $m$, $|k_m| \geq 2$, by Claim~\ref{cl:length} it can be seen that $len \left(\Pi_{m=1}^r (g_1 g_3)^{k_{2m-1}} (g_2 g_1^{-1} g_2^{-1}g_3)^{k_{2m}} \right) > 1$, in contradiction. If the genus of $\Sigma$ is 3, then note that $p_{1234}$ (defined in the proof of Claim~\ref{claim:4}) preserves factors (in the sense of the proof of Claim~\ref{cl:length}), which means that $len(\Pi_{m=1}^r (g_1 g_3)^{k_{2m-1}} (g_2 g_1^{-1} g_2^{-1}g_3)^{k_{2m}}) = len(p_{1234}(\Pi_{m=1}^r (g_1 g_3)^{k_{2m-1}} (g_2 g_1^{-1} g_2^{-1}g_3)^{k_{2m}})) > 1$, by Claim~\ref{cl:length}. This is again a contradiction. Therefore $z \not\in i_2(C_1)$.

                Likewise, assume $z \in i_2(C_2)$. Similarly to the previous case, there is a conjugacy between $\Pi_{m=1}^r (g_1 g_3)^{k_{2m-1}} (g_2 g_1^{-1} g_2^{-1}g_3)^{k_{2m}}$ and $g_3^n$ in $\pi_1(\Sigma, i_2(s_0))$ for some $n \in \Z$. As before, $len(\Pi_{m=1}^r (g_1 g_3)^{k_{2m-1}} (g_2 g_1^{-1} g_2^{-1}g_3)^{k_{2m}}) > 1$ and $len(g_3^n) = 1$, but $len$ is conjugation-invariant, so this is a contradiction.
                
                All cases lead to a contradiction, therefore $z \in i_2(C)$.
            \end{proof}

\section{Incompressibility in genus 3}
\label{sec:incompressibility}
    This section describes an embedding $i_3$ of an egg-beater-like surface $E = C \bigsqcup C_1 \bigsqcup C_2 \bigsqcup C_3$ to a closed orientable surface $\Sigma_3$ of genus 3 such that $i_3 \restriction_C$ is incompressible, i.e. $(i_3 \restriction_C)_*: \pi_1(C) \to \pi_1(\Sigma_3)$ and $\tau_{i_3 \restriction_C}: \pi_0(\mathscr{L}C) \to \pi_0(\mathscr{L}\Sigma_3)$ are both injective. This incompressibility will imply Theorems~\ref{thm:2},~\ref{thm:3} by following the original proofs in~\cite{PS14} and~\cite{10}. Notation from previous sections is used in this section.
    
    \subsection{The egg-beater surface and map, and the embedding}
        Recall the egg-beater surface $D = C_V \bigcup_{VH_{0,1}} C_H \bigsqcup C_1 \bigsqcup C_2$ defined in Section~\ref{sec:3}, and the cylinder $C_* = [-1,1] \times \R / L\Z$. Let $C_3$ be another copy of $C_*$, and define $E = D \bigsqcup C_3$, with the symplectic form $dx \wedge dy$ on every component. Denote by $c_3: C_* \to E, e_3: D \to E$ the symplectic inclusions.
        
        The embedding $i_3: E \hookrightarrow \Sigma_3$ is defined in two parts. First, embed $C$ symplectically into $S^2$. Note that $C$ separates the sphere into 4 connected components. Choose any one of these components, and connect it with handles to the other 3 components, with one handle each. The result is a genus 3 surface $\Sigma_3$, and this defines $i_3 \restriction_C$. Define $i_3$ on $C_1 \bigsqcup C_2 \bigsqcup C_3$ by symplectically embedding each of them on a different handle. The embedding $i_3$ can be seen in Figure~\ref{fig:handles_gen_3}, and the generators of $\pi_1(\Sigma_3,i_3(s_0)) = \langle g_1,...,g_6 | [g_1,g_2][g_3,g_4][g_5,g_6] \rangle$ can be seen in Figure~\ref{fig:gens_gen_3}. The orientations of $i_3 \restriction_{C_j}$ for $j=1,2,3$ are chosen such that
        \begin{myequation}
            \tau_{e_3 \circ c_1} [t \mapsto (0,Lt)]_{\pi_0(\mathscr{L}C_*)} = \eta_{\Sigma_3,i_3(s_0)}(g_1) ,\\
            \tau_{e_3 \circ c_2} [t \mapsto (0,Lt)]_{\pi_0(\mathscr{L}C_*)} = \eta_{\Sigma_3,i_3(s_0)}(g_3) ,\\
            \tau_{c_3} [t \mapsto (0,Lt)]_{\pi_0(\mathscr{L}C_*)} = \eta_{\Sigma_3,i_3(s_0)}(g_5) .
        \end{myequation}
        
        The various cylinders, egg-beater surfaces and closed surfaces mentioned in this paper and the embeddings between them are summarized in Figure~\ref{fig:embeddings}.
        
        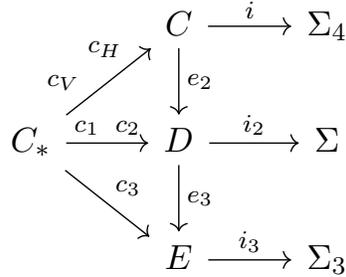
\begin{figure}[!ht]
            \centering
            \adjustbox{scale=1.2,center}{
                \begin{tikzcd}
                    & C \arrow[r,"i"] \arrow[d,"e_2"] & \Sigma_4 \\
                    C_* \arrow[ru,"c_V" near start,"c_H" near end] \arrow[r,"c_1" near start,"c_2" near end]\arrow[rd,"c_3"] & D \arrow[r,"i_2"] \arrow[d,"e_3"] & \Sigma \\
                    & E \arrow[r,"i_3"] & \Sigma_3
                \end{tikzcd}
            }
            \caption{Embeddings between the surfaces.}
            \label{fig:embeddings}
        \end{figure}
        
        \begin{figure}[!ht]
            \centering
            \hspace{-0.05\textwidth}%
            \begin{minipage}{.5\textwidth}
                \centering
                \scalebox{0.8}{
                \begin{tikzpicture}
                    \draw[gray,ultra thin] (-1,0) circle (3);
                    \draw[gray,ultra thin] (-1,0) circle (2.5);
                    \draw (-2, -3.7) node[anchor=south] {\Large $i_3(C_V)$};
                    
                    \draw[gray,ultra thin] (1,0) circle (3);
                    \draw[gray,ultra thin] (1,0) circle (2.5);
                    \draw (2, -3.7) node[anchor=south] {\Large $i_3(C_H)$};
                    
                    \draw[gray, dashed] (1.3,-0.2) ++ (80:3.7) arc (80:-80:3.7) -- ++(-4,0) arc (260:100:3.7) -- ++(4,0);
                    \draw (-4,2.8) node[anchor=south] {\Large $\tilde{\alpha}$};
                    
                    \filldraw[fill=white,draw=black,thick]
                        (-1.5, 3) ++(54:2.8) arc (54:-54:2.8) --
                        ++(126:0.6) arc (-54:54:2.2) -- cycle;
                    \filldraw[fill=white,draw=black,thick] (0, 5) circle (0.3);
                    \filldraw[fill=white,draw=black,thick] (0, 1) circle (0.3);
                    
                    \draw[gray, ultra thin] (-1.5, 3) ++(-20:2.5) ++(-20:0.3) arc (-20:-200:0.3);
                    \draw[gray, dashed] (-1.5, 3) ++(-20:2.5) ++(160:0.3) arc (160:-20:0.3);
                    \draw[gray, ultra thin] (-1.5, 3) ++(-22:2.5) ++(-22:0.3) arc (-22:-202:0.3);
                    \draw[gray, dashed] (-1.5, 3) ++(-22:2.5) ++(158:0.3) arc (158:-22:0.3);
                    
                    \draw (1.2, 2) node[anchor=west] {\Large $i_3(C_2)$};
                    
                    \filldraw[fill=white,draw=black,thick]
                        (-4.2, 3) ++(54:2.8) arc (54:-54:2.8) --
                        ++(126:0.6) arc (-54:54:2.2) -- cycle;
                    \filldraw[fill=white,draw=black,thick] (-2.7, 5) circle (0.3);
                    \filldraw[fill=white,draw=black,thick] (-2.7, 1) circle (0.3);
                    
                    \draw[gray, ultra thin] (-4.2, 3) ++(-20:2.5) ++(-20:0.3) arc (-20:-200:0.3);
                    \draw[gray, dashed] (-4.2, 3) ++(-20:2.5) ++(160:0.3) arc (160:-20:0.3);
                    \draw[gray, ultra thin] (-4.2, 3) ++(-22:2.5) ++(-22:0.3) arc (-22:-202:0.3);
                    \draw[gray, dashed] (-4.2, 3) ++(-22:2.5) ++(158:0.3) arc (158:-22:0.3);
                    
                    \draw (-1.5, 2) node[anchor=west] {\Large $i_3(C_1)$};
                    
                    \filldraw[fill=white,draw=black,thick]
                        (1.2, 3) ++(54:2.8) arc (54:-54:2.8) --
                        ++(126:0.6) arc (-54:54:2.2) -- cycle;
                    \filldraw[fill=white,draw=black,thick] (2.7, 5) circle (0.3);
                    \filldraw[fill=white,draw=black,thick] (2.7, 1) circle (0.3);
                    
                    \draw[gray, ultra thin] (1.2, 3) ++(-20:2.5) ++(-20:0.3) arc (-20:-200:0.3);
                    \draw[gray, dashed] (1.2, 3) ++(-20:2.5) ++(160:0.3) arc (160:-20:0.3);
                    \draw[gray, ultra thin] (1.2, 3) ++(-22:2.5) ++(-22:0.3) arc (-22:-202:0.3);
                    \draw[gray, dashed] (1.2, 3) ++(-22:2.5) ++(158:0.3) arc (158:-22:0.3);
                    
                    \draw (3.9, 2) node[anchor=west] {\Large $i_3(C_3)$};
                    
                    \filldraw[black] (0,-2.55) circle (2pt);
                    \draw (0,-2.8) node[anchor=north] {\Large $i_3(s_0)$};
                \end{tikzpicture}}
                \caption{The embedding $i_3$ and a loop $\tilde{\alpha}$; this figure is embedded in $S^2$.}
                \label{fig:handles_gen_3}
            \end{minipage}%
            \begin{minipage}{.5\textwidth}
                \centering
                \pgfdeclarelayer{bg}
                \pgfsetlayers{bg,main}
                \scalebox{0.8}{
                \begin{tikzpicture}
                    \filldraw[fill=white,draw=black,thick]
                        (-1.5, 3) ++(54:2.8) arc (54:-54:2.8) --
                        ++(126:0.6) arc (-54:54:2.2) -- cycle;
                    \filldraw[fill=white,draw=black,thick] (0, 5) circle (0.3);
                    \filldraw[fill=white,draw=black,thick] (0, 1) circle (0.3);
                    
                    \filldraw[fill=white,draw=black,thick]
                        (-4.2, 3) ++(54:2.8) arc (54:-54:2.8) --
                        ++(126:0.6) arc (-54:54:2.2) -- cycle;
                    \filldraw[fill=white,draw=black,thick] (-2.7, 5) circle (0.3);
                    \filldraw[fill=white,draw=black,thick] (-2.7, 1) circle (0.3);
                    
                    \filldraw[fill=white,draw=black,thick]
                        (1.2, 3) ++(54:2.8) arc (54:-54:2.8) --
                        ++(126:0.6) arc (-54:54:2.2) -- cycle;
                    \filldraw[fill=white,draw=black,thick] (2.7, 5) circle (0.3);
                    \filldraw[fill=white,draw=black,thick] (2.7, 1) circle (0.3);
                    
                    \filldraw[black] (0,-2.55) circle (2pt);
                    \draw (0,-2.8) node[anchor=north] {\Large $i_3(s_0)$};
                    
                    \node (origin) at (0,-2.55) {};
                    
                    \begin{scope}
                    [gray,dashed,decoration={markings,mark=at position 0.7 with {\arrow[scale=2]{>}}}]
                        \draw
                            (origin) to[out=130,in=270] (-1,3) to[out=90,in=145,distance=100] (-2.7,5) arc (54:-54:2.5) to[out=226,in=160] (origin)
                            decorate{(-2.7,5) arc (54:-54:2.5) to[out=226,in=160] (origin)};
                        \draw (-1.9,-0.7) node[anchor=east] {\Large $g_2$};
                        \draw
                            (origin) to[out=55,in=270] (1.5,3) to[out=90,in=145,distance=100] (0,5) arc (54:-54:2.5) to[out=226,in=90] (origin)
                            decorate{(0,5) arc (54:-54:2.5) to[out=226,in=90] (origin)};
                        \draw (0,0) node {\Large $g_4$};
                        \draw
                            (origin) to[out=0,in=270] (5,3) to[out=90,in=145,distance=100] (2.7,5) arc (54:-54:2.5) to[out=226,in=45] (2,-1) to[out=225,in=30] (origin)
                            decorate{(2.7,5) arc (54:-54:2.5) to[out=226,in=45] (2,-1) to[out=225,in=30] (origin)};
                        \draw (2.4,-1) node {\Large $g_6$};
                    \end{scope}
                    
                    \begin{pgfonlayer}{bg}
                        \begin{scope}
                        [gray,decoration={markings,mark=at position 0.4 with {\arrow[scale=2]{>}}}]
                            \draw[postaction={decorate}] (origin) to[out=180,in=290] (-4,0) to[out=110,in=90,distance=50] (-2,1) to[out=270,in=140] (origin);
                            \draw (-3.5,0) node {\Large $g_1$};
                            \draw[postaction={decorate}] (origin) to[out=120,in=250] (-0.5,1) to[out=70,in=90,distance=50] (1,1) to[out=270,in=70] (origin);
                            \draw (-0.5,1.8) node {\Large $g_3$};
                            \draw[postaction={decorate}] (origin) to[out=40,in=290] (2,1) to[out=110,in=90,distance=50] (3.5,1) to[out=270,in=0] (origin);
                            \draw (2.5,2) node {\Large $g_5$};
                        \end{scope}
                    \end{pgfonlayer}
                \end{tikzpicture}}
                \caption{The generators of $\pi_1(\Sigma_3,i_3(s_0))$, dashed lines only for visibility.}
                \label{fig:gens_gen_3}
            \end{minipage}
        \end{figure}
        
        As can be seen in Figure~\ref{fig:gens_gen_3}, the induced homomorphism $(i_3)_*: F_3 \to \pi_1(\Sigma_3,i_3(s_0))$ acts on the generators $a,b,c$ of $F_3$ so:
        \begin{myequation}
            a \mapsto g_1 g_3 ,\\
            b \mapsto g_3 g_5 ,\\
            c \mapsto g_3 ,
        \end{myequation}
        where $a,b,c \in \pi_1(C,s_0)$ are as defined in Subsection~\ref{sec:3.1}.
        
        The following claim is key in proving Theorems~\ref{thm:2},~\ref{thm:3} in genus 3:
        
        \begin{claim}
        \label{cl:incompressible}
            The map $i_3 \restriction_C$ is incompressible, i.e. $(i_3 \restriction_C)_*$ and $\tau_{i_3 \restriction_C}$ are injective.
        \end{claim}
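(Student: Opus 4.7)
The plan is to prove both injectivity statements simultaneously via a standard innermost-disk argument, once I establish that each boundary circle of $i_3(C)$ is essential both in $i_3(C)$ and in the closure of the complement $N = \Sigma_3 \setminus i_3(C)$, and that the fundamental groups of both are torsion-free.

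First I would analyze $N$. An Euler characteristic count gives $\chi(C) = \chi(C_V) + \chi(C_H) - \chi(S_0) - \chi(S_1) = -2$, so $\chi(N) = \chi(\Sigma_3) - \chi(C) = -2$. By the construction of $i_3$, the three handles connect one distinguished complementary disk in $S^2 \setminus i_3(C)$ to each of the other three, merging all four planar complementary disks into a single connected surface. Thus $\overline{N}$ is connected with exactly four boundary circles (those of $i_3(C)$), and the relation $2 - 2g - 4 = -2$ forces genus $0$, so $\overline{N}$ is a four-holed sphere. Since neither $C$ nor $\overline{N}$ is a disk, every boundary circle of $i_3(C)$ is essential in each, and both fundamental groups are free (hence torsion-free).

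Next I would execute the innermost-disk cancellation. Let $H$ be either a nullhomotopy $D^2 \to \Sigma_3$ (for $\pi_1$-injectivity of $(i_3\!\restriction_C)_*$) or a free homotopy $S^1 \times [0,1] \to \Sigma_3$ (for injectivity of $\tau_{i_3\restriction_C}$), whose domain-boundary maps into the interior of $i_3(C)$. After perturbing $H$ to be transverse to $\partial i_3(C)$, the preimage $H^{-1}(\partial i_3(C))$ is a closed $1$-submanifold in the interior of the domain, hence a disjoint union of circles. Pick an innermost one, $c_0$; it bounds a disk $D_0$ whose interior is mapped into a single component $X$ of $\Sigma_3 \setminus \partial i_3(C)$ (so $\overline{X}$ is either $C$ or $\overline{N}$), and $H|_{c_0}$ is a loop of some degree $n_0 \in \Z$ in a single boundary circle $\partial_j$ of $i_3(C)$. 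The restriction $H|_{D_0}$ exhibits $\partial_j^{n_0}$ as trivial in $\pi_1(\overline{X})$; since $\partial_j$ is essential in $\overline{X}$ and $\pi_1(\overline{X})$ is torsion-free, this forces $n_0 = 0$. Hence $H|_{c_0}$ is nullhomotopic in $\partial_j$, and $H|_{D_0}$ may be replaced by a nullhomotopy perturbed slightly off $\partial_j$, reducing the number of circles in $H^{-1}(\partial i_3(C))$ by one. Iterating leaves a map into $i_3(C)$, proving the original homotopy can be realized inside $i_3(C)$ and yielding both injectivity statements at once.

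The main obstacle is the topological bookkeeping of the first step: verifying that the three-handle recipe for $i_3$ truly produces a connected, non-disk complement, so that no boundary circle of $C$ is rendered inessential on either side. Once this is in hand, the innermost-disk cancellation is a classical incompressibility argument, and the uniformity of the reduction over the two domains ($D^2$ and $S^1 \times [0,1]$) delivers both $\pi_1$- and $\tau$-injectivity simultaneously.
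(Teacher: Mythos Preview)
Your innermost-disk reduction is correct for $\pi_1$-injectivity, but there is a genuine gap in the $\tau$-injectivity half. When the domain of $H$ is the annulus $S^1\times[0,1]$, circles in $H^{-1}(\partial i_3(C))$ need not bound disks: they may be parallel to the core $S^1\times\{1/2\}$. After you remove all disk-bounding circles you can be left with essential circles $c_1,\dots,c_m$, and the sub-annuli between them map \emph{alternately} into $i_3(C)$ and into $\overline N$; your iteration simply halts, and the assertion ``iterating leaves a map into $i_3(C)$'' is false as stated. This is not a cosmetic omission: $\pi_1$-injectivity of a subsurface does not imply conjugacy-injectivity in general (e.g.\ $C=\Sigma_2\setminus N(\alpha)$ with $\alpha$ non-separating: the two boundary circles of $C$ are non-conjugate in $\pi_1(C)\cong F_3$ but conjugate in $\pi_1(\Sigma_2)$).

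The fix uses one more feature of $\overline N$ beyond ``not a disk'': in the four-holed sphere the four boundary circles represent pairwise distinct conjugacy classes of $\pi_1(\overline N)\cong F_3$ (they are $x_1,x_2,x_3,(x_1x_2x_3)^{-1}$). An $\overline N$-annulus between consecutive essential circles $c_i,c_{i+1}$ then forces $\partial_{j_i}^{\,n_i}\sim\partial_{j_{i+1}}^{\,n_{i+1}}$ in $\pi_1(\overline N)$, hence $j_i=j_{i+1}$ and $n_i=n_{i+1}$; so $H|_{c_i}$ and $H|_{c_{i+1}}$ are homotopic as maps into the single circle $\partial_{j_i}$, and that $\overline N$-annulus can be replaced by a homotopy inside $\partial_{j_i}$ and pushed into $C$. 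With this extra step your argument goes through. For comparison, the paper proceeds quite differently: it factors $(i_3|_C)_*=\varphi\circ I$ with $I\in\mathrm{Aut}(F_3)$ and $\varphi:(a,b,c)\mapsto(g_1,g_5,g_3)$, invokes the Conjugacy Theorem for the amalgamated-product decomposition of $\pi_1(\Sigma_3)$, and finishes with a based intersection-number count against an explicit loop $\tilde\alpha$ disjoint from $i_3(C)$. Your (patched) topological route is more conceptual and would apply to any essential subsurface whose complement has pairwise non-conjugate boundary classes; the paper's argument is more hands-on and tied to this specific embedding.
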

        
        Define the egg-beater maps on $E$ as follows:
        \begin{myequation}
            \Theta_k: F_2 \to Ham_c(E) ,\\
            V \mapsto (e_3 \circ e_2 \circ c_V)_* f_k \bigsqcup -(e_3 \circ c_1)_* f_k \bigsqcup -(e_3 \circ c_2)_* f_k \bigsqcup 0 \restriction_{C_3} ,\\
            H \mapsto (e_3 \circ e_2 \circ c_H)_* f_k \bigsqcup 0 \restriction_{C_1} \bigsqcup -(e_3 \circ c_2)_* f_k \bigsqcup -(c_3)_* f_k .
        \end{myequation}
        
        These are Hamiltonian diffeomorphisms: $\Theta_k(V)$ can be seen to be generated by the Hamiltonian
        \[
            (e_3 \circ e_2 \circ c_V)_* h_k \bigsqcup -(e_3 \circ c_1)_* h_k \bigsqcup -(e_3 \circ c_2)_* f_h \bigsqcup 0 \restriction_{C_3} ,
        \]
        and $\Theta_k(H)$ by the Hamiltonian
        \[
            (e_3 \circ e_2 \circ c_H)_* f_k \bigsqcup 0 \restriction_{C_1} \bigsqcup -(e_3 \circ c_2)_* f_k \bigsqcup -(c_3)_* f_k .
        \]
        
        The egg-beater maps on $\Sigma_3$ are $(i_3)_* \Theta_k(w)$, for $w \in F_2$.
    
    \subsection{Incompressibility of the embedding}
        The incompressibility of $i_3 \restriction_C$ is shown using a concept similar to the  intersection number between curves. The intersection number of free homotopy classes of a surface is the minimum number of intersections between any two loops representing the classes, see Subsection 1.2.3 of~\cite{FM} for a precise definition. In this subsection, the next similar definition is used.

        \begin{definition}
            Given a surface $M$, a basepoint $p \in M$, and a curve $\tilde{\alpha}: S^1 \to M$, define
            \begin{myequation}
                int_{M,p,\tilde{\alpha}}: \pi_1(M,p) \to \Z ,\\
                \gamma \mapsto \min_{\tilde{\gamma}} |\tilde{\gamma} \cap \tilde{\alpha}| ,
            \end{myequation}
            where the minimum runs over all loops $\tilde{\gamma}: S^1 \to M$ such that $\tilde{\gamma}$ is a representative of $\gamma$ with basepoint $p$, and where $|\tilde{\gamma} \cap \tilde{\alpha}|$ denotes the number of intersections of $\tilde{\gamma},\tilde{\alpha}$.
        \end{definition}    
        A loop $\tilde{\gamma}: S^1 \to M$ based at $p \in M$ is said to be in \textit{minimal position} with respect to $\tilde{\alpha}$ if $|\tilde{\gamma} \cap \tilde{\alpha}| = int_{M,p,\tilde{\alpha}}([\tilde{\gamma}]_{\pi_1(M,p)})$. A loop $\tilde{\gamma}$ in a surface $M$, based in $p$, is said to form a \textit{free bigon} with another loop (which is not necessarily based in $p$), $\tilde{\alpha}$, if there is an embedded closed disk $D \subset M$ such that $\partial D$ is the union of an arc of $\tilde{\alpha}$ and the image under $\tilde{\gamma}$ of an arc of $S^1$ that does not contain 0 (here $S^1 \simeq \R / \Z$). The following claim gives a criterion for being in minimal position with respect to $\tilde{\alpha}$:
        
        \begin{claim}
        \label{cl:bigon}
            Let $M$ be a closed orientable surface of genus $\geq 2$, $p \in M$, $\tilde{\alpha}: S^1 \to M$ be a loop which does not pass through $p$, and $\tilde{\gamma}: S^1 \to M$ be a loop based at $p \in M$. Then $\tilde{\gamma}$ is in minimal position with respect to $\tilde{\alpha}$ if and only if $\tilde{\gamma}$ does not form a free bigon with $\tilde{\alpha}$.
        \end{claim}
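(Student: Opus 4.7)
The plan is to handle the two directions separately. The forward direction (minimal position $\Rightarrow$ no free bigon) is the easy one and I would prove it by contrapositive. Suppose $\tilde{\gamma}$ forms a free bigon with $\tilde{\alpha}$, bounded by an arc $a \subset \tilde{\alpha}(S^1)$ and an arc $b = \tilde{\gamma}(I)$, where $I \subset S^1$ is an arc not containing $0$. Pick a small tubular neighborhood of the bigon disk $D$ and construct an isotopy of $M$ supported in this neighborhood that pushes $b$ across $D$ to the other side of $a$. Composing $\tilde{\gamma}$ with this isotopy gives a new loop $\tilde{\gamma}'$ with $|\tilde{\gamma}' \cap \tilde{\alpha}| = |\tilde{\gamma} \cap \tilde{\alpha}| - 2$. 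Since $0 \notin I$, the point $p = \tilde{\gamma}(0)$ is fixed throughout the isotopy, so $\tilde{\gamma}'$ is still based at $p$ and homotopic to $\tilde{\gamma}$ rel $p$, hence $[\tilde{\gamma}']_{\pi_1(M,p)} = [\tilde{\gamma}]_{\pi_1(M,p)}$. This contradicts minimal position.

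The backward direction is the genuine content, and I would adapt the standard bigon criterion (cf. Proposition 1.7 in~\cite{FM}) to the based setting. Assume no free bigon exists, but that $\tilde{\gamma}$ is not in minimal position: there is a loop $\tilde{\gamma}'$ based at $p$, homotopic to $\tilde{\gamma}$ rel $p$, and transverse to $\tilde{\alpha}$, with $|\tilde{\gamma}' \cap \tilde{\alpha}| < |\tilde{\gamma} \cap \tilde{\alpha}|$. Equip $M$ with a hyperbolic metric (possible since $M$ has genus $\geq 2$) and pass to the universal cover $\widetilde{M} \simeq \mathbb{H}^2$. Fix a lift $\tilde{p}$ of $p$ and lift $\tilde{\gamma}, \tilde{\gamma}'$ to arcs $\Gamma, \Gamma' \colon [0,1] \to \widetilde{M}$ starting at $\tilde{p}$. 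Since $\tilde{\gamma}, \tilde{\gamma}'$ are homotopic rel $p$, both $\Gamma$ and $\Gamma'$ terminate at the same endpoint $g \cdot \tilde{p}$, where $g = [\tilde{\gamma}]_{\pi_1(M,p)}$.

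The lifts of $\tilde{\alpha}$ form a $\pi_1(M,p)$-invariant family of properly embedded arcs (or compact loops, if a lift is closed; those can be handled separately or ignored since the claim on the basepoint side is essentially combinatorial). Each transverse intersection of $\tilde{\gamma}$ with $\tilde{\alpha}$ in $M$ corresponds to an intersection of $\Gamma$ with exactly one such lift; likewise for $\Gamma'$. Since the number of intersections strictly drops when passing from $\Gamma$ to $\Gamma'$, and $\Gamma, \Gamma'$ share endpoints, a pigeonhole argument shows that some lift $\ell$ of $\tilde{\alpha}$ meets $\Gamma$ in (at least) two points $x, y$ while meeting $\Gamma'$ fewer times. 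The sub-arc of $\Gamma$ between $x$ and $y$, together with the sub-arc of $\ell$ between $x$ and $y$, bounds an embedded disk in $\widetilde{M}$ (by the Jordan curve theorem in the plane). Choosing $x, y$ to be innermost—so that the sub-arc of $\Gamma$ between them meets $\ell$ only at $x, y$—gives an actual bigon in $\widetilde{M}$, which projects to a free bigon in $M$ between $\tilde{\gamma}$ and $\tilde{\alpha}$. Finally, because $\tilde{\alpha}$ does not pass through $p$, the basepoint $\tilde{p}$ does not lie on $\ell$, so the parametrization arc of $\Gamma$ bounding the bigon does not contain $0 \in S^1$, giving a free bigon in the exact sense required by the definition. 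This contradicts our assumption.

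The main obstacle I expect is the innermost bigon extraction in the third paragraph: ensuring that a genuinely innermost pair $(x,y)$ exists on a single lift $\ell$, rather than alternating among different lifts of $\tilde{\alpha}$. One has to be careful because $\tilde{\gamma}$ may have self-intersections and $\tilde{\alpha}$ need not be simple; however, in the cover each individual lift $\ell$ is an embedded arc, and restricting attention to intersections of $\Gamma$ with a single fixed $\ell$ allows the usual innermost-disk argument to go through. A secondary subtlety is verifying that the bigon arc on $\tilde{\gamma}$ avoids $0 \in S^1$, which follows directly from $p \notin \tilde{\alpha}(S^1)$ after possibly rechoosing $\tilde{p}$.
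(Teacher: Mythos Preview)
Your approach coincides with the paper's: the paper gives no detailed argument but simply states that the proof is analogous to Proposition~1.7 of \cite{FM} with extra book-keeping to track the basepoint, and your proposal carries out exactly that adaptation (lift to $\mathbb{H}^2$, locate a lift of $\tilde{\alpha}$ meeting the lifted arc $\Gamma$ at least twice, extract an innermost bigon, project). One remark worth recording: your pigeonhole/innermost-bigon step relies on each lift of $\tilde{\alpha}$ being an embedded separating line in $\mathbb{H}^2$, which in general needs $\tilde{\alpha}$ to be simple---a hypothesis neither you nor the paper states explicitly, but which is satisfied in the paper's only use of the claim (the curve $\tilde{\alpha}$ of Figure~\ref{fig:handles_gen_3}).
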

        
        This claim is a variant of the bigon criterion, presented as Proposition~1.7 of~\cite{FM}, which checks whether two free homotopy classes are in minimal position. The proof of Claim~\ref{cl:bigon} is analogous to the proof of Proposition~1.7 of~\cite{FM}, with some extra book-keeping in order to follow whether $\tilde{\gamma}$ and $\tilde{\alpha}$ pass through $p$.
        
        Using Claim~\ref{cl:bigon}, one can prove Claim~\ref{cl:incompressible}.
        
        \begin{proof}[Proof of Claim~\ref{cl:incompressible}]
            Consider the automorphism $I: F_3 \to F_3$ given by
            \begin{myequation}
                a \mapsto ac ,\\
                b \mapsto cb ,\\
                c \mapsto c ,
            \end{myequation}
            and denote by $\varphi: F_3 \to \pi_1(\Sigma_3,i_3(s_0))$ the following homomorphism:
            \begin{myequation}
                a \mapsto g_1 ,\\
                b \mapsto g_5 ,\\
                c \mapsto g_3 .
            \end{myequation}
            Note that $(i_3 \restriction_C)_* = \varphi \circ I$. Since $I$ is an isomorphism, it preserves conjugacy classes, and so $i_3 \restriction_C$ is incompressible if and only if $\varphi$ preserves conjugacy classes, i.e. if and only if for all $x,y \in F_3$ which are not conjugates, $\varphi(x),\varphi(y) \in \pi_1(\Sigma_3,i_3(s_0))$ also are not conjugates.
            
            Recall the definitions and the statement of the Conjugacy Theorem for Free Products with Amalgamation, given in Section~\ref{sec:2}. The conjugacy relation, in any group, will again be denoted by $\sim$. Assume by contradiction that there exist $x,y \in F_3$ such that $x \not\sim y$ and $\varphi(x) \sim \varphi(y)$. In order for the concept of cyclically reduced elements of $F_3$ to be defined, consider $F_3 = \langle a,b,c \rangle$ as the free product $\left \langle \left \langle R \right \rangle * \left \langle S \right \rangle \right \rangle$, for some two proper subsets $R \bigsqcup S = \{a,b,c\}$. This partition of $\{a,b,c\}$ gives a well-defined concept of cyclically reduced sequences of elements of $F_3$, and also a well defined length function $len = len_{R,S} : F_3 \to \Z_{\geq 0}$, defined in Section~\ref{sec:2}. Without loss of generality, assume $x,y$ are cyclically reduced; this means that $\varphi(x), \varphi(y)$ are also cyclically reduced. We will reach a contradiction. \\
            
            First, consider the case where for all partitions $R,S$, $len_{R,S}(x), len_{R,S}(y) < 2$. If $len_{R,S}(x) = 0$ or $len_{R,S}(y) = 0$, then $x = 1$ or $y = 1$ respectively, and therefore $\varphi(x) = 1$ or $\varphi(y) = 1$ respectively. This implies that $\varphi(x) = \varphi(y) = 1$, and by injectivity of $\varphi$, $x = y = 1$, in contradiction. Therefore, we get that for all partitions $R,S$, $len_{R,S}(x) = len_{R,S}(y) = 1$. This implies that there exist $r, s \in \{a,b,c\}$ such that $x \sim r^n, y \sim s^m$ for some $m,n \in \Z$. By passing to the abelianization $\Z^6$ of $\pi_1(\Sigma_3)$, one sees that in fact $r^n = s^m$, so $x \sim y$, in contradiction. Therefore there exists a proper partition $R \bigsqcup S = \{a,b,c\}$ such that $len_{R,S}(x) \geq 2$ or $len_{R,S}(y) \geq 2$. By symmetry between $x$ and $y$ and between $a,b,c$, one may assume that $R = \{a\}, S = \{b,c\}$, and $len_{R,S}(x) \geq 2$.
            
            Denote the groups $G = \langle g_1,g_2 \rangle \simeq F_2$, $H = \langle g_3,...,g_6 \rangle \simeq F_4$, subgroups $A = \langle [g_1,g_2] \rangle < G$, $B = \langle [g_3,g_4][g_5,g_6] \rangle < H$, and the isomorphism $\psi: A \xrightarrow{\sim} B : [g_1,g_2] \mapsto ([g_3,g_4][g_5,g_6])^{-1}$. Note that $\pi_1(\Sigma_3) = \langle G * H, A = B, \psi \rangle$ and that $\varphi$ preserves factors, in the sense of Section~\ref{sec:2}. Let $(x_1,...,x_n)$ be a cyclically reduced sequence representing $x$ (i.e. $x = \Pi_{i=1}^n x_i$).
            
            Since $\varphi(x) \sim \varphi(y)$, by the Conjugacy Theorem for Free Products with Amalgamation, there exist $\mu \in A, 1 \leq k \leq n$ such that $\varphi(y) = \mu \cdot \Pi_{i=k}^{k-1} \varphi(x_i) \cdot \mu^{-1}$ (where $\Pi_{i=k}^{k-1} \varphi(x_i)$ is shorthand for $\varphi(x_k) \cdot ... \cdot \varphi(x_n) \cdot \varphi(x_1) \cdot ... \cdot \varphi(x_{k-1})$), and since $A = \langle [g_1,g_2] \rangle$, $\varphi(y) = [g_1,g_2]^l \cdot \Pi_{i=k}^{k-1} \varphi(x_i) \cdot [g_1,g_2]^{-l}$ for some $l \in \Z$.
            
            If $l = 0$, $\varphi(y) = \Pi_{i=k}^{k-1} \varphi(x_i)$. Since $\varphi$ is injective, this equation can be pulled back to $F_3$ to get $y = \Pi_{i=k}^{k-1} x_i \sim x$, in contradiction. Thus $l \neq 0$.
            
            Consider the loop $\tilde{\alpha}: S^1 \to \Sigma_3$, depicted in Figure~\ref{fig:handles_gen_3}. The image of $\tilde{\alpha}$ does not intersect $i_3(C)$, and its free homotopy class is $[g_1 g_3 g_5]_{\pi_0(\mathscr{L}\Sigma_3)}$. Since $\Ima(\tilde{\alpha}) \cap i_3(C) = \emptyset$ then for any element $\gamma \in \pi_1(C,s_0)$, $(i_3)_*(\gamma)$ has a representative loop in $\Sigma_3$ that does not intersect $\tilde{\alpha}$ (as such a representative can be chosen to be contained in $i_3(C)$), and thus for any element $\gamma \in \pi_1(C,s_0)$, $\varphi(\gamma)$ also has a representative that does not intersect $\tilde{\alpha}$, since $\Ima \varphi = \Ima \ (i_3)_*$. Therefore, $int_{\Sigma_3,i_3(s_0),\tilde{\alpha}}(\varphi(\gamma)) = 0$ for all $\gamma \in \pi_1(C,s_0)$. We will show that $int_{\Sigma_3,i_3(s_0),\tilde{\alpha}}(\varphi(y)) > 0$, in contradiction. \\
            
            Construct the following representative $\tilde{\upsilon}: S^1 \to \Sigma_3$ to $\varphi(y)$. Note that representatives of the element $[g_1,g_2] \in \pi_1(\Sigma_3)$ circle the leftmost handle (i.e. the one that contains $i_3(C_1)$). Take a representative $\tilde{\rho}: S^1 \to \Sigma_3$ to $[g_1,g_2]$, based in $i_3(s_0)$, that is in minimal position with respect to $\tilde{\alpha}$. Note that since $\tilde{\rho}$ circles the leftmost handle, $|\tilde{\rho} \cap \tilde{\alpha}| > 0$. Take a representative $\tilde{\tau}: S^1 \to i_3(C)$ to $\Pi_{i=k}^{k-1} \varphi(x_i)$ that is contained in $i_3(C)$, and denote $\tilde{\upsilon} = (\tilde{\rho})^l \# \tilde{\tau} \# (\tilde{\rho})^{-l}$ (where $\#$ denotes path concatenation). By reparametrization, it can be assumed that $\tilde{\upsilon} \restriction_{[0,\frac13]}, \tilde{\upsilon} \restriction_{[\frac13,\frac23]}, \tilde{\upsilon} \restriction_{[\frac23,1]}$ are parametrizations of $(\tilde{\rho})^l, \tilde{\tau}, (\tilde{\rho})^{-l}$ respectively (i.e. $\tilde{\upsilon}$ passes from $(\tilde{\rho})^l$ to $\tilde{\tau}$ at time $\frac13$, etcetera).
            
            If $\tilde{\upsilon}$ forms a free bigon with $\tilde{\alpha}$, the bigon's boundary must be made up of an arc of $\tilde{\alpha}$, which intersects $\tilde{\rho}$ at its endpoints (since $\tilde{\alpha}$ and $\tilde{\tau}$ are disjoint), and an arc $\tilde{\upsilon} \restriction_{[t_1,t_2]}$, where $[t_1,t_2] \subset S^1$ is some interval not containing 0. If $t_1 < \frac13$ and $t_2 > \frac23$, it can be seen that $\tilde{\tau}$ is contractible. This implies that $\Pi_{i=k}^{k-1} \varphi(x_i) = 1$ and therefore $x=1$, by contradiction to $n \geq 2$. Thus $t_1 \geq \frac23$ or $t_2 \leq \frac13$. But this implies that $\tilde{\rho}$ forms a free bigon with $\tilde{\alpha}$, in contradiction to the fact that $\tilde{\rho}$ is in minimal position with respect to $\tilde{\alpha}$. Thus $\tilde{\upsilon}$ cannot form a free bigon with $\tilde{\alpha}$. By Claim~\ref{cl:bigon}, $\tilde{\upsilon}$ is in minimal position with respect to $\tilde{\alpha}$, and therefore $int_{\Sigma_3,i_3(s_0),\tilde{\alpha}}(\varphi(y)) = |\tilde{\upsilon} \cap \tilde{\alpha}| > 0$. This is a contradiction, and completes the proof.
        \end{proof}

\bibliographystyle{plain}
\bibliography{refs.bib}

\end{document}